\def\th@plain{%
	\thm@notefont{}
	\itshape 
}
\def\th@definition{%
	\thm@notefont{}
	\normalfont 
}
\numberwithin{equation}{section}
\newtheorem{theorem}{Theorem}[section]
\newtheorem{proposition}[theorem]{Proposition}
\newtheorem{corollary}[theorem]{Corollary}
\newtheorem{lemma}[theorem]{Lemma}
\theoremstyle{definition}
\theoremstyle{remark}
\newtheorem{remark}[theorem]{Remark}
\newcommand{\de}{\partial}
\newcommand{\N}{\mathbb{N}}
\newcommand{\Z}{\mathbb{Z}}
\newcommand{\R}{\mathbb{R}}
\newcommand{\C}{\mathbb{C}}
\newcommand{\ang}[1]{\langle #1 \rangle}
\DeclareMathOperator{\spt}{spt}
\DeclareMathOperator{\dvol}{dvol}
\DeclareMathOperator{\dist}{dist}
\DeclareMathOperator{\Maps}{Maps}
\DeclareMathOperator{\vol}{vol}
\begin{document}
\title[$\Gamma$-convergence for the self-dual $U(1)$-Yang--Mills--Higgs]{Convergence of the self-dual $\bm{U(1)}$-Yang--Mills--Higgs \\energies to the $\bm{(n-2)}$-area functional}
\author[D. Parise]{Davide Parise}
\address{University of Cambridge, Department of Pure Mathematics and Mathematical Statistics, Wilberforce Road, Cambridge CB3 0WA, United Kingdom.}
\email{dp588@cam.ac.uk}
\author[A. Pigati]{Alessandro Pigati}
\address{Courant Institute of Mathematical Sciences, New York University, 251 Mercer Street, New York, NY 10003, United States of America.}
\email{ap6968@nyu.edu}
\author[D. Stern]{Daniel Stern}
\address{University of Chicago, Department of Mathematics, 5734 S University Avenue, Chicago, IL 60637, United States of America.}
\email{dstern@uchicago.edu}
\date{\today}
\begin{abstract}
Given a hermitian line bundle $L\to M$ on a closed Riemannian manifold $(M^n,g)$, the self-dual Yang--Mills--Higgs energies are a natural family of functionals
\begin{align*}
	&E_\epsilon(u,\nabla):=\int_M\Big(|\nabla u|^2+\epsilon^2|F_\nabla|^2+\frac{(1-|u|^2)^2}{4\epsilon^2}\Big)
\end{align*}
defined for couples $(u,\nabla)$ consisting of a section $u\in\Gamma(L)$ and a hermitian connection $\nabla$ with curvature $F_\nabla$. While the critical points of these functionals have been well-studied in dimension two by the gauge theory community, it was shown in \cite{PigatiStern} that critical points in higher dimension converge as $\epsilon\to 0$ (in an appropriate sense) to minimal submanifolds of codimension two, with strong parallels to the correspondence between the Allen--Cahn equations and minimal hypersurfaces. 

In this paper, we complement this idea by showing the $\Gamma$-convergence of $E_\epsilon$ to ($2\pi$ times) the codimension two area: more precisely,
given a family of couples $(u_\epsilon,\nabla_\epsilon)$ with $\sup_\epsilon E_\epsilon(u_\epsilon,\nabla_\epsilon)<\infty$, we prove that a suitable gauge invariant Jacobian $J(u_\epsilon,\nabla_\epsilon)$ converges to an integral $(n-2)$-cycle $\Gamma$, in the homology class dual to the Euler class $c_1(L)$, with mass $2 \pi \mathbb{M}(\Gamma)\le\liminf_{\epsilon \rightarrow 0}E_\epsilon(u_\epsilon,\nabla_\epsilon)$.
We also obtain a recovery sequence, for any integral cycle in this homology class.

Finally, we apply these techniques to compare min-max values for the $(n-2)$-area from the Almgren--Pitts theory with those obtained from the Yang--Mills--Higgs framework, showing that the former values always provide a lower bound for the latter. As an ingredient, we also establish a Huisken-type monotonicity result along the gradient flow of $E_{\epsilon}$.
\end{abstract}

\maketitle


\section{Introduction}


\subsection{Background and motivation}
The discovery in the late 1970's of deep connections between minimal hypersurfaces and the Allen--Cahn equations opened up a rich line of investigation, shedding light onto the structure of solutions of semilinear elliptic equations and the existence theory for minimal hypersurfaces. Like minimal hypersurfaces, which arise as critical points of the $(n-1)$-area functional, solutions of the Allen--Cahn equations
\begin{equation}\label{AC}
\epsilon \Delta u=\frac{1}{\epsilon}W'(u)
\end{equation}
(where $\epsilon>0$ and $W: \mathbb{R}\to [0,\infty)$ is a double-well potential) arise naturally as critical points for the Allen--Cahn energies
$$F_{\epsilon}(u):=\int_{\Omega}\Big(\frac{\epsilon}{2}|du|^2+\frac{W(u)}{\epsilon}\Big)$$
on $W^{1,2}(\Omega,\mathbb{R})$. A recurring theme in the study of the correspondence between solutions of \eqref{AC} and minimal hypersurfaces is the convergence not only of critical points, but of the \emph{variational theory} for the functionals $F_{\epsilon}$ to that of the $(n-1)$-area on the space of $(n-1)$-boundaries as $\epsilon\to 0$. The earliest results in this direction were obtained by Modica and Mortola \cite{ModicaMortola} who established the asymptotic convergence of $F_{\epsilon}$ to (a constant multiple of) the perimeter functional for Caccioppoli sets, in the framework of \emph{$\Gamma$-convergence} introduced a few years earlier by De Giorgi \cite{DeGiorgiFranzoni}. De Giorgi's $\Gamma$-convergence provides a natural weak notion of convergence for variational problems involving a singular perturbation, well-suited to establishing convergence of minimizers to minimizers (see \cite{Braides} and \cite{DalMaso} for a contemporary treatment of $\Gamma$-convergence, and \cite{Alberti} for its application to the study of phase transitions). The work of Modica--Mortola was later generalized by Modica \cite{Modica} and Sternberg \cite{Sternberg}, in their resolution of some conjectures of Gurtin \cite{Gurtin}.

While the $\Gamma$-convergence results of \cite{Modica}, \cite{ModicaMortola}, and \cite{Sternberg} imply that $F_{\epsilon}$-minimizing solutions of \eqref{AC} (rather, their level sets and energy measures) converge to area-minimizing hypersurfaces, a series of results obtained over the last five years \cite{Dey, GasparGuaraco, Guaraco} show that the min-max theory for the Allen--Cahn functionals $F_{\epsilon}$ likewise converges to the min-max theory for the area functional on $(n-1)$-boundaries in the geometric measure theory framework developed by Almgren and Pitts \cite{Pitts}. Building on the analytic work of \cite{HutchinsonTonegawa} and \cite{TonWick}, these and related results have established the min-max theory for the Allen--Cahn functionals as a valuable regularization of the Almgren--Pitts min-max construction of minimal hypersurfaces, finding use, for instance, in Chodosh and Mantoulidis's work on the Multiplicity One conjecture in three-manifolds \cite{ChoMan}.

In view of these and other applications, it is natural to seek an analogous correspondence between certain geometric elliptic systems and minimal submanifolds of higher codimension. In \cite{PigatiStern}, the second- and third-named authors proposed a natural analog in codimension two, with the role of the Allen--Cahn equations taken on by a well-studied family of elliptic systems from gauge theory. Specifically, \cite{PigatiStern} considers the \emph{self-dual $U(1)$-Yang--Mills--Higgs} energies: the gauge-invariant functionals $E_{\epsilon}(u,\nabla)$ acting on a section $u\in \Gamma(L)$ and metric-compatible connection $\nabla$ on a hermitian line bundle $L\to M$ by
$$E_{\epsilon}(u,\nabla):=\int_M\Big(|\nabla u|^2+\epsilon^2|F_{\nabla}|^2+\frac{1}{4\epsilon^2}(1-|u|^2)^2\Big).$$
The functionals $E_{\epsilon}$ are distinguished from \emph{formally} similar functionals---such as $\int_M(|\nabla u|^2+\lambda |F_{\nabla}|^2+\frac{1}{4\epsilon^2}(1-|u|^2)^2)$ for $\lambda\neq \epsilon^2$---by their so-called \emph{self-duality}: namely, $E_{\epsilon}$ enjoys additional symmetry properties, such that minimizers of $E_{\epsilon}$ for bundles $L\to \Sigma^2$ over a Riemann surface $\Sigma^2$ satisfy a special first-order system known as the \emph{vortex equations}. 

The study of these functionals has a long history, which we do not attempt to survey here. In his thesis work \cite{Taubes1, Taubes2}, Taubes classified finite-energy critical points of $E_{\epsilon}$ for the trivial bundle $L\cong \mathbb{C}\times \mathbb{R}^2\to \mathbb{R}^2$, showing that all such critical points satisfy the first-order vortex equations, are determined---up to gauge equivalence---by the finite zero set $u^{-1}\{0\}\subset \mathbb{C}$ (counted with multiplicity), and have quantized energy $E_{\epsilon}(u,\nabla)=2\pi N\in 2\pi \mathbb{N}$ corresponding to the mass of the zero set $N=|u^{-1}\{0\}|$ (see \cite{Taubes1}, \cite{Taubes2}, and \cite{JaffeTaubes} for details). The asymptotic analysis as $\epsilon\to 0$ of the rescaled functionals $E_{\epsilon}$ was first taken up by Hong, Jost, and Struwe, who showed in \cite{HongJostStruwe} that for minimizers $(u_{\epsilon},\nabla_{\epsilon})$ of $E_{\epsilon}$ on line bundles $L\to \Sigma^2$ over a Riemann surface $\Sigma$, energy and curvature concentrate (subsequentially) as $\epsilon\to 0$ at a collection of $|\deg(L)|$ points in $\Sigma$, outside of which $u_{\epsilon}$ converges to a unit section $u_0$ and $\nabla_{\epsilon}$ to a flat connection $\nabla_0$ for which $\nabla_0u_0=0$. 

The results of \cite{PigatiStern} provide a far-reaching generalization of Hong--Jost--Struwe's analysis, characterizing the limiting behavior of arbitrary critical points on line bundles over a base manifold $M^n$ of general dimension. Namely, it is shown in \cite{PigatiStern} that for sequences $(u_{\epsilon},\nabla_{\epsilon})$ of critical points satisfying a uniform energy bound $E_{\epsilon}(u_{\epsilon},\nabla_{\epsilon})\leq C$, the energy densities 
$$e_{\epsilon}(u_{\epsilon},\nabla_{\epsilon}) := |\nabla_{\epsilon}u_{\epsilon}|^2+\epsilon^2|F_{\nabla_{\epsilon}}|^2+\frac{(1-|u_{\epsilon}|^2)^2}{4\epsilon^2}$$
converge subsequentially weakly in $(C^0)^*$ to (the weight measure of) a \emph{stationary integral $(n-2)$-varifold} $V$ in $M$---i.e., a (possibly singular) minimal variety of codimension two. In particular, this gives a codimension-two analog to the results of Hutchinson--Tonegawa \cite{HutchinsonTonegawa} for the Allen--Cahn equations, showing that critical points for $E_{\epsilon}$ converge cleanly to critical points of the $(n-2)$-area functional in the $\epsilon\to 0$ limit. We note, moreover, that the analysis in \cite{PigatiStern} depends strongly on the specific choice of coupling constants in the definition of $E_{\epsilon}$, suggesting that the \emph{self-dual} $U(1)$-Yang--Mills--Higgs energies provide more or less the \emph{unique} codimension-two analog for the Allen--Cahn energies, at least among similar functionals of Yang--Mills--Higgs type. 

\begin{remark}
In particular, the convergence behavior for critical points $(u_{\epsilon},\nabla_{\epsilon})$ of $E_{\epsilon}$ in the $O(1)$ energy regime is considerably simpler than its counterpart for the non-gauged Ginzburg--Landau energies
$$G_{\epsilon}: W^{1,2}(M,\mathbb{C})\to \mathbb{R},\quad G_{\epsilon}(u):=\int_M\Big(|du|^2+\frac{(1-|u|^2)^2}{4\epsilon^2}\Big)$$
in the $O(|\log\epsilon|)$ energy regime, whose critical points in general exhibit \emph{partial} energy concentration along a stationary, \emph{rectifiable} (not necessarily integral) $(n-2)$-varifold (cf.\ \cite{BethuelBrezisHelein, BethuelBrezisOrlandi,Cheng,ChaoYan,Chun,LinRiviere,LinRiviere2,Sandier1,Stern,Struwe} for details of the asymptotic analysis of the complex Ginzburg--Landau equations, as well as \cite{BethuelRiviere1994, BethuelRiviere1995, Riviere,Riviere1999,Riviere2002,SandierSerfaty} for related results for other functionals of Yang--Mills--Higgs type whose behavior resembles that of $G_{\epsilon}$). As remarked in \cite{PigatiStern}, the variational theory for the functionals $G_{\epsilon}$ is best understood as a relaxation of that for the Dirichlet energy on singular $S^1$-valued maps, and its relation to geometric measure theory and minimal submanifolds is subtle, and qualitatively quite different from that of the Allen--Cahn or self-dual Yang--Mills--Higgs energies. 
\end{remark}

Building on the ideas of \cite{PigatiStern}, the aim of the present paper is to understand to what extent the \emph{variational theory} for the functionals $E_{\epsilon}$ converges to that of the $(n-2)$-area, in the spirit of similar results for the Allen--Cahn functionals. Our chief analytic result provides a key step toward answering this question, establishing the \emph{$\Gamma$-convergence} of the functionals $E_{\epsilon}$ for pairs $(u,\nabla)$ on a hermitian line bundle $L\to M$ to the mass functional on the space of integral $(n-2)$-cycles dual to $c_1(L)$. This convergence result---whose precise formulation we give in the following subsection---may be thought of as a \emph{codimension-two analog of the classical results of Modica and Mortola}; and in spite of the very different setting, its proof bears a surprising resemblance to the original arguments in \cite{ModicaMortola}. In addition to implying the convergence of $E_{\epsilon}$-minimizing pairs $(u_{\epsilon},\nabla_{\epsilon})$ to area-minimizing $(n-2)$-cycles, the $\Gamma$-convergence framework---together with some topological arguments---allows us to compare the energy of min-max critical points for $E_{\epsilon}$ to the areas of corresponding min-max minimal varieties, along the lines of the comparison results for the Allen--Cahn and Almgren--Pitts min-max constructions obtained in \cite{GasparGuaraco,Guaraco}.

\subsection{Convergence results for the self-dual Yang--Mills--Higgs energies}
Let $L\to M^n$ be a hermitian line bundle over a closed, oriented Riemannian manifold $(M^n,g)$. Given a metric connection $\nabla$ on $L$, recall that the curvature $F_{\nabla}\in \Omega^2(M)\otimes \mathfrak{so}(L)$ is given by
\begin{equation}
F_{\nabla}(X,Y)u:=[\nabla_X,\nabla_Y]u-\nabla_{[X,Y]}u=-i\omega_{\nabla}(X,Y)u
\end{equation}
for some two-form $\omega_{\nabla}\in \Omega^2(M)$, which we will frequently identify with $F_{\nabla}$ when there is no confusion. Given a pair $(u,\nabla)$ consisting of a section $u\in \Gamma(L)$ and metric connection $\nabla$, we define as in \cite{PigatiStern} the two-form $\psi(u,\nabla)\in\Omega^2(M)$ by
$$\psi(u,\nabla)(X,Y):=2\langle i\nabla_Xu,\nabla_Yu\rangle,$$
which is easily seen to satisfy the pointwise bound $|\psi(u)|\leq |\nabla u|^2$ (cf.\ \cite[Section 2]{PigatiStern}). For the $\Gamma$-convergence results, we will be particularly interested in the two-forms
\begin{equation}\label{jdef}
J(u,\nabla):=\psi(u,\nabla)+(1-|u|^2)\omega_{\nabla}=d\langle \nabla u,iu\rangle+\omega_{\nabla},
\end{equation}
whose role should be compared to that of the one-forms $\sqrt{2W(v)}\cdot dv$ for real-valued functions $v:M\to \mathbb{R}$ in the work of Modica--Mortola \cite{ModicaMortola}. 

As with any $\Gamma$-convergence result, our main theorem consists of two parts. First, we show that for any family of pairs $(u_{\epsilon},\nabla_{\epsilon})$ with 
$$\sup_{\epsilon>0}E_{\epsilon}(u_{\epsilon},\nabla_{\epsilon})\leq \Lambda<\infty,$$
there exists a subsequence $(u_{\epsilon_j},\nabla_{\epsilon_j})$, with $\epsilon_j\to 0$, to which we can associate a limiting integral $(n-2)$-cycle $\Gamma$ with $2\pi \mathbb{M}(\Gamma)\leq \Lambda$. Second, we show that any integral $(n-2)$-cycle dual to $c_1(L)$ can be obtained in this way. More precisely, we have the following.

\begin{theorem}[$\bm{\Gamma}$-convergence]\label{GammaConvThm}
For a hermitian line bundle $L \rightarrow M$ as above, the following hold:
\begin{enumerate}[(i)]
\item \textup{Liminf inequality}. Given a family $(u_\epsilon, \nabla_\epsilon)$ of smooth sections with $|u_{\epsilon}|\leq 1$ and metric connections with uniformly bounded energies $E_\epsilon(u_\epsilon, \nabla_\epsilon) \leq \Lambda$, there exists an integral $(n - 2)$-cycle $\Gamma$ Poincar\'e dual to the Euler class $c_1(L) \in H^2(M; \mathbb{Z})$ such that, up to a subsequence, 
\begin{equation*}
J(u_\epsilon, \nabla_\epsilon) \rightharpoonup 2\pi\Gamma, \quad \text{as $\epsilon \rightarrow 0$}, 
\end{equation*}
as currents. Moreover, the following liminf inequality holds:
\begin{equation*}
2 \pi \mathbb{M}(\Gamma) \leq \liminf_{\epsilon \rightarrow 0} E_\epsilon(u_\epsilon, \nabla_\epsilon). 
\end{equation*}
\item \textup{Recovery sequence}. Given an integral $(n - 2)$-cycle $\Gamma$ whose homology class $[\Gamma]\in H_{n-2}(M;\Z)$ is dual to $c_1(L) \in H^2(M; \mathbb{Z})$, there exists a family $(u_\epsilon, \nabla_\epsilon)$ of smooth sections and connections on $L$ such that 
\begin{equation*}
J(u_\epsilon, \nabla_\epsilon) \rightharpoonup 2 \pi \Gamma, \quad \text{as $\epsilon \rightarrow 0,$}
\end{equation*}
as currents, and
\begin{equation*}
\lim_{\epsilon \rightarrow 0} E_\epsilon(u_\epsilon, \nabla_\epsilon) = 2 \pi \mathbb{M}(\Gamma). 
\end{equation*}
\end{enumerate}
\end{theorem}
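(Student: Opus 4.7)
The starting point is the pointwise inequality $|J(u,\nabla)| \le e_\epsilon(u,\nabla)$, which follows by combining $|\psi(u,\nabla)| \le |\nabla u|^2$ with the AM--GM bound $(1-|u|^2)|\omega_\nabla| \le \frac{(1-|u|^2)^2}{4\epsilon^2} + \epsilon^2|\omega_\nabla|^2$. Integrating gives $\mathbb{M}(J(u_\epsilon,\nabla_\epsilon)) \le E_\epsilon(u_\epsilon,\nabla_\epsilon) \le \Lambda$. Since $d\omega_\nabla = 0$ by Bianchi, each $J_\epsilon$ is a closed $2$-form, and Chern--Weil identifies $[J_\epsilon/(2\pi)]$ with the image of $c_1(L)$ in $H^2(M;\R)$. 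Federer--Fleming compactness for normal currents then yields, along a subsequence, $J_\epsilon \rightharpoonup 2\pi\Gamma$ for a closed $(n-2)$-current $\Gamma$ with $\mathbb{M}(\Gamma) \le \Lambda/(2\pi)$ in the homology class dual to $c_1(L)$; lower semicontinuity of mass gives $2\pi\mathbb{M}(\Gamma) \le \liminf_\epsilon E_\epsilon(u_\epsilon,\nabla_\epsilon)$.

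The hard part is integrality of $\Gamma$. I would argue via slicing: for smooth $2$-dimensional probes $D \subset M$ along which $|u_\epsilon|$ is close to $1$ on $\partial D$ (a condition satisfied for a.e.\ slice in a continuous family, since the potential term in $E_\epsilon$ controls the $L^2$ size of $1-|u_\epsilon|^2$), Stokes' theorem gives
\[
\int_D \frac{J(u_\epsilon,\nabla_\epsilon)}{2\pi} = \frac{1}{2\pi}\int_{\partial D} \langle \nabla u_\epsilon, iu_\epsilon \rangle + \frac{1}{2\pi}\int_D \omega_{\nabla_\epsilon},
\]
which, in a local trivialization of $L$, computes the topological degree of $u_\epsilon/|u_\epsilon|$ along $\partial D$ relative to that trivialization---an integer. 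Passing to the limit, $\langle \Gamma, D \rangle \in \Z$ for a dense family of slices, and integrality of $\Gamma$ follows by a White/Federer-type slicing criterion. The quantitative underpinning is a gauge-invariant Jerrard--Soner-type Jacobian estimate, bounding $J_\epsilon/(2\pi)$ minus the ``vortex current'' in a negative Sobolev or flat norm by a power of $E_\epsilon$; establishing such an estimate is the principal technical difficulty of the liminf half.

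\textbf{Part (ii).} I would first treat the case where $\Gamma$ is a smooth oriented codimension-$2$ submanifold $\Sigma \subset M$ (possibly with integer multiplicity) dual to $c_1(L)$, and reduce the general case by flat-norm approximation of integral cycles by smooth representatives with convergent mass, followed by a diagonal extraction. For smooth $\Sigma$, the model is the radial Taubes vortex on $\C$: I would build $(u_\epsilon,\nabla_\epsilon)$ by pasting this vortex profile---rescaled to length scale $\epsilon$ and transported via exponential charts into each normal $\R^2$-slice of a tubular neighborhood of $\Sigma$---while taking $(u_\epsilon,\nabla_\epsilon)$ to be a gauge-equivalent flat unit-norm configuration far from $\Sigma$. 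Taubes's quantization then gives an energy contribution of exactly $2\pi$ per unit of multiplicity per unit area of $\Sigma$ in the $\epsilon \to 0$ limit, so $E_\epsilon \to 2\pi\mathbb{M}(\Sigma)$, while a direct computation on the model shows $J_\epsilon \rightharpoonup 2\pi[\Sigma]$. The remaining obstacle is the globalization of the Taubes vortex across a possibly nontrivial normal bundle and the smoothing step needed to pass from smooth $\Sigma$ to a general integral $\Gamma$: both are technical but largely routine adaptations of well-established constructions.
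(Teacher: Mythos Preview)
Your outline for part (i) diverges from the paper's argument, and the route you sketch is harder than necessary. You correctly identify the pointwise bound $|J(u,\nabla)|\le e_\epsilon(u,\nabla)$ and the resulting mass control, but your plan to obtain integrality by slicing with $2$-disks and invoking a gauge-invariant Jerrard--Soner-type estimate is exactly the delicate analysis the paper manages to avoid. The paper instead proves a \emph{compactness result for the sections themselves}: after a suitable change of gauge (Lemma~\ref{w1p.est}), one has $\|\nabla_0 u_\epsilon\|_{L^p}+\|\nabla_\epsilon-\nabla_0\|_{L^p}\le C$ for every $p\in(1,\tfrac{n}{n-1})$, so by Rellich--Kondrachov a subsequence of the $u_\epsilon$ converges strongly in every $L^q$ to a limit $u\in\mathcal{U}_p(L)$ with $|u|\equiv 1$. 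One then checks directly that $J(u_\epsilon,\nabla_\epsilon)\to J(u)$ in $(C^1)^*$. Integrality of $\tfrac{1}{2\pi}J(u)$ is now a statement about distributional Jacobians of $W^{1,p}$ circle-valued maps, and follows from the coarea formula for such maps (Proposition~\ref{j.props} and Corollary~\ref{same.class}, drawing on \cite{ABO2,JerrardSoner2}); no degree or Sandier--Jerrard estimates enter. The step you are missing is the $W^{1,p}$ gauge-fixing lemma, which uses the identity $J(u,\nabla)=d\beta(u,\nabla)+\omega_0$ together with the $L^1$ bound on $J$ to control the connection in a weak norm, and then elliptic regularity for the Hodge Laplacian to bound its co-exact part in $L^p$.

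For part (ii), your construction for a \emph{smooth} $\Sigma$ via transplanted Taubes vortices is correct in spirit and close to what the paper does, but your reduction step has a gap: you assert that an arbitrary integral $(n-2)$-cycle can be approximated in flat norm by smooth submanifolds (with multiplicity) \emph{with convergent mass}. This is not a standard approximation theorem and you give no argument for it; what is classically available is Federer's approximation by \emph{polyhedral} cycles with convergent mass, and that is what the paper uses (Proposition~\ref{densityProp}). The consequence is that the vortex gluing must be carried out over the $(n-2)$-simplices of a triangulation rather than in the tubular neighborhood of a smooth submanifold, and one then has to cut off near the $(n-3)$-skeleton. The paper does this with a second cut-off at scale $\delta(\epsilon)=\epsilon^{3/4}$, chosen so that both the potential term $\epsilon^{-2}\operatorname{vol}(B_\delta(K))$ and the gradient errors are $o(1)$ (Proposition~\ref{glueing}). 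The issue you flag about a nontrivial normal bundle is not the real obstruction; the approximation step is.
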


\begin{remark}\label{curv.rk}
	Since the curvature forms $\omega_\epsilon:=i F_{\nabla_\epsilon}$ satisfy
	$J(u_\epsilon,\nabla_{\epsilon})=\omega_\epsilon+d\langle\nabla_{\epsilon}u_\epsilon,iu_\epsilon\rangle$,
	if $E_{\epsilon}(u_{\epsilon},\nabla_{\epsilon})=O(1)$, the boundedness of $\langle\nabla_{\epsilon}u_\epsilon,iu_\epsilon\rangle$ in $L^2(M)$ together with part (i) above implies that the curvatures $\omega_{\epsilon}$ also have a subsequential limit as currents.
	Simple examples show that this limit may fail to coincide with $2\pi\Gamma$ under our assumptions. However, assuming that $\nabla_\epsilon$ is critical for the energy
	$E_\epsilon(u_\epsilon,\cdot)$---hence, a minimizer by convexity of $E_\epsilon$ in $\nabla_{\epsilon}$---the corresponding Euler--Lagrange equation \eqref{EL} gives
	$\langle\nabla_{\epsilon}u_\epsilon,iu_\epsilon\rangle\rightharpoonup 0$, since $\epsilon^2\omega_\epsilon\to 0$ in $L^2(M)$. Thus, in this case
	\begin{equation*}
		2\pi\Gamma=\lim_{\epsilon\to 0}\omega_\epsilon
	\end{equation*}
	as currents. Together with Corollary \ref{min.cvg.cor} below, this implies that for a sequence of \emph{minimizers} $(u_\epsilon,\nabla_{\epsilon})$, the curvature forms $\frac{1}{2\pi}\omega_\epsilon$ converge subsequentially to an integral, area-minimizing cycle $\Gamma$ whose associated varifold agrees with the energy concentration varifold $V$ from \cite[Theorem~1.1]{PigatiStern} (up to a subsequence). This answers a question raised in \cite{PigatiStern}.
\end{remark}

Readers familiar with the $\Gamma$-convergence theory developed for the normalized Ginzburg--Landau functionals $\frac{G_{\epsilon}}{|\log\epsilon|}$ in recent decades (see in particular \cite{ABO2,ABO1, AlicandroPonsiglione, CanevariOrlandi2, JerrardSoner}) will notice some formal similarities between the above result and analogs for the functionals $\frac{G_{\epsilon}}{|\log\epsilon|}$. Namely, the results of \cite{ABO1} and \cite{JerrardSoner} show that for any complex-valued map $u:M\to \mathbb{C}$ with 
$$G_{\epsilon}(u)\leq 2\pi \Lambda \log(1/\epsilon)$$
and $0<\epsilon\ll 1$ sufficiently small, the Jacobian 2-form
$$Ju:=2du^1\wedge du^2$$
(which coincides with both $\psi(u,\nabla)$ and $J(u,\nabla)$ when $\nabla$ is the standard flat connection on the trivial bundle) is weakly close to ($2\pi$ times) an integral $(n-2)$-boundary $\Gamma$ of mass $\mathbb{M}(\Gamma)\leq \Lambda+o(1)$. The proof requires some delicate analysis: in particular, the mass $\|Ju\|_{L^1}$ of the Jacobians themselves is \emph{not} bounded in general by the energy $\frac{G_{\epsilon}(u)}{|\log\epsilon|}$ for small $\epsilon$, and the proof of the associated $\Gamma$-convergence result relies instead on a subtle application of the degree estimates of Sandier \cite{Sandier2} and Jerrard \cite{Jerrard} (see also \cite{SandierSerfaty2, SerfatyTice}).

In our setting, by contrast, the two-forms $J(u,\nabla)$ are easily seen to enjoy a pointwise bound
\begin{equation}\label{j.pt.bd}
|J(u,\nabla)|\leq |\nabla u|^2+(1-|u|^2)|F_{\nabla}|\leq |\nabla u|^2+\epsilon^2|F_{\nabla}|^2+\frac{1}{4\epsilon^2}(1-|u|^2)^2
\end{equation}
by the energy integrand $e_{\epsilon}(u,\nabla)$, so that $\|J(u,\nabla)\|_{L^1}\leq E_{\epsilon}(u,\nabla)$ automatically. As a consequence, to prove part (i) of Theorem \ref{GammaConvThm}, the only challenge lies in showing that the limiting $(n-2)$-cycle $\Gamma$ is integer rectifiable (and lies in the correct homology class). 

To achieve this, we establish a compactness result for sections $u_{\epsilon}\in \Gamma(L)$ with $E_{\epsilon}(u_{\epsilon},\nabla_{\epsilon})=O(1)$, showing that they converge subsequentially (after change of gauge) to a singular unit section, whose topological singular set $\Gamma$ coincides with the limit of $\frac{1}{2\pi}J(u_{\epsilon},\nabla_{\epsilon})$. These singular unit sections (modulo the action of the gauge group) provide a natural codimension-two analog of Caccioppoli sets, and it is not difficult to see that their topological singular sets are integral $(n-2)$-cycles (indeed, this is a consequence of results in \cite{ABO1} and \cite{JerrardSoner2}). Again, we note that the broad outlines of the argument are very much reminiscent of those in \cite{ModicaMortola} for the Allen--Cahn energies, with the bound \eqref{j.pt.bd} playing the role of the simple estimate $|\sqrt{2W(v)}\cdot dv|\leq \frac{\epsilon}{2}|dv|^2+\frac{W(v)}{\epsilon}$ for real-valued functions $v:M\to \mathbb{R}$.

\subsection{Applications to the study of minimizers and min-max constructions}
As an immediate corollary of Theorem \ref{GammaConvThm}, we see that minimizers of $E_{\epsilon}$ converge to homologically area-minimizing $(n-2)$-cycles, answering a question raised in \cite{PigatiStern}.

\begin{corollary}\label{min.cvg.cor} Let $L\to M$ be a nontrivial hermitian line bundle over a closed, oriented $n$-manifold $(M^n,g)$. If $(u_{\epsilon},\nabla_{\epsilon})$ minimize $E_{\epsilon}(u,\nabla)$ among all pairs $(u,\nabla)$ on $L$, then 
\begin{equation}\label{energ.lim}
\lim_{\epsilon\to 0}E_{\epsilon}(u_{\epsilon},\nabla_{\epsilon})=2\pi\min\{\mathbb{M}(\Gamma)\mid \Gamma\in \mathcal{Z}_{n-2}(M;\mathbb{Z})\text{ Poincar\'{e} dual to }c_1(L)\},
\end{equation}
and along a subsequence $\epsilon=\epsilon_j\to 0$, we have weak convergence
$$\lim_{\epsilon\to 0}J(u_{\epsilon},\nabla_{\epsilon})=\lim_{\epsilon\to 0}\omega_{\nabla_{\epsilon}}=2\pi \Gamma$$
of $J(u_{\epsilon},\nabla_{\epsilon})$ and the curvatures $\omega_{\nabla_{\epsilon}}$ to an $(n-2)$-cycle $\Gamma$ minimizing mass in the homology class dual to $c_1(L)$.
\end{corollary}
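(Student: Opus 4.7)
The plan is to derive Corollary \ref{min.cvg.cor} as a direct consequence of the $\Gamma$-convergence result in Theorem \ref{GammaConvThm}. First I would fix a mass-minimizing integral $(n-2)$-cycle $\Gamma_0$ Poincaré dual to $c_1(L)$; its existence is a standard consequence of compactness and lower semicontinuity for integral cycles in a fixed homology class. Applying part (ii) of Theorem \ref{GammaConvThm} to $\Gamma_0$ produces a recovery sequence $(\tilde u_\epsilon, \tilde\nabla_\epsilon)$ with $\lim_{\epsilon \to 0} E_\epsilon(\tilde u_\epsilon, \tilde\nabla_\epsilon) = 2\pi\mathbb{M}(\Gamma_0)$, so by the minimizing property of $(u_\epsilon, \nabla_\epsilon)$,
\begin{equation*}
\limsup_{\epsilon\to 0} E_\epsilon(u_\epsilon,\nabla_\epsilon) \le 2\pi\mathbb{M}(\Gamma_0).
\end{equation*}

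This uniform energy bound lets me invoke part (i) of Theorem \ref{GammaConvThm}: up to a subsequence $\epsilon_j \to 0$, the Jacobians satisfy $J(u_{\epsilon_j}, \nabla_{\epsilon_j}) \rightharpoonup 2\pi\Gamma$ for some integral cycle $\Gamma$ in the homology class dual to $c_1(L)$, with
\begin{equation*}
2\pi\mathbb{M}(\Gamma) \le \liminf_{j\to\infty} E_{\epsilon_j}(u_{\epsilon_j},\nabla_{\epsilon_j}).
\end{equation*}
Since $\Gamma$ shares the homology class of $\Gamma_0$, we have $\mathbb{M}(\Gamma_0) \le \mathbb{M}(\Gamma)$, and the two displays above force equalities throughout: $\Gamma$ is a mass-minimizer and $\lim_{\epsilon \to 0} E_\epsilon(u_\epsilon,\nabla_\epsilon) = 2\pi\mathbb{M}(\Gamma_0)$, establishing \eqref{energ.lim}.

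Finally, for the convergence $\omega_{\nabla_\epsilon} \rightharpoonup 2\pi\Gamma$ of the curvatures, I would follow the argument sketched in Remark \ref{curv.rk}. Since $(u_\epsilon,\nabla_\epsilon)$ jointly minimizes, $\nabla_\epsilon$ is in particular critical for $E_\epsilon(u_\epsilon,\cdot)$; the associated Euler--Lagrange equation \eqref{EL}, combined with $\epsilon^2\omega_{\nabla_\epsilon} \to 0$ in $L^2$ (immediate from the energy bound), yields $\langle\nabla_\epsilon u_\epsilon, iu_\epsilon\rangle \rightharpoonup 0$ in $L^2$, and hence $d\langle\nabla_\epsilon u_\epsilon, iu_\epsilon\rangle \to 0$ as currents. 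Rewriting \eqref{jdef} as $\omega_{\nabla_\epsilon} = J(u_\epsilon,\nabla_\epsilon) - d\langle\nabla_\epsilon u_\epsilon, iu_\epsilon\rangle$ then shows that $\omega_{\nabla_\epsilon}$ converges to the same limit $2\pi\Gamma$ along the same subsequence. No substantive obstacle is expected: once the nontrivial analytic content of Theorem \ref{GammaConvThm} is in hand, the argument is the textbook variational consequence of $\Gamma$-convergence, coupled with the purely algebraic identity relating $J$, $\omega$, and $d\langle\nabla u, iu\rangle$.
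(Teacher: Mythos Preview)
Your proposal is correct and follows essentially the same approach as the paper: use part~(ii) of Theorem~\ref{GammaConvThm} on a mass-minimizing cycle to bound the $\limsup$, then part~(i) to bound the $\liminf$ and identify the limit cycle, and finally appeal to Remark~\ref{curv.rk} for the convergence of the curvature forms. The only minor point left implicit (in both your write-up and the paper's) is that minimizers automatically satisfy $|u_\epsilon|\le 1$, which is needed to invoke part~(i); this follows from the standard truncation argument.
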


With Theorem \ref{GammaConvThm} in place, the proof of the corollary follows standard lines: by part (i) of the theorem, we know that the forms $J(u_{\epsilon},\nabla_{\epsilon})$ for a minimizing family $(u_{\epsilon},\nabla_{\epsilon})$ converge subsequentially to an integral $(n-2)$-cycle $\Gamma$, in the correct homology class, of mass $\mathbb{M}(\Gamma)\leq \frac{1}{2\pi}\liminf_{\epsilon\to 0} E_{\epsilon}(u_{\epsilon},\nabla_{\epsilon})$, providing one inequality in \eqref{energ.lim}. The opposite inequality follows from part (ii) of the theorem, which guarantees the existence of a recovery sequence $(u_{\epsilon},\nabla_{\epsilon})$ for a mass-minimizing cycle $\Gamma$. The convergence of the curvature two-forms $\omega_{\nabla_{\epsilon}}$ follows from the discussion in Remark \ref{curv.rk}.

For the min-max applications, we will focus on the trivial bundle $L=\mathbb{C}\times M\to M$ over a given closed, oriented $(M^n,g)$. We then consider a Banach space $X$ consisting of pairs $(u,\nabla=d-i\alpha)$, equipped with an appropriate norm, with respect to which $E_{\epsilon}$ is a smooth functional satisfying a variant of the Palais--Smale condition (as in Section 5 below or Section 7 of \cite{PigatiStern}). Removing from $X$ the degenerate set
$$X_0:=\{(u,\nabla)\in X : u\equiv 0\}$$
(on which $E_{\epsilon}\sim 1/\epsilon^2$ blows up as $\epsilon\to 0$), we see that the action of the gauge group of maps $\mathcal{G}=\Maps(M,S^1)$ given by
$$\phi \cdot (u,\nabla) := (\phi \cdot u, \nabla -i \phi^*(d\theta))$$
restricts to an action on the complement $X\setminus X_0$. 

For the purposes of intuition, we can view the gauge-invariant functionals $E_{\epsilon}$ as functions on the moduli space
$$\mathcal{M}:=(X\setminus X_0)/\mathcal{G},$$
whose topology may be compared with that of the space
$$Z:=\partial {\bf I}_{n-1}(M;\mathbb{Z}) \subseteq \mathcal{Z}_{n-2}(M;\mathbb{Z})$$
of integral $(n-2)$-boundaries in $M$, equipped with the flat metric. Indeed, we claim (see Section 5) that there are geometrically natural isomorphisms between the homotopy groups
$$\Phi: \pi_k(\mathcal{M},*)\to \pi_k(Z,0),$$
where $*\in \mathcal{M}$ denotes the collection of pairs $(u,\nabla)\in X$ with $|u|\equiv 1$ and $\nabla u=0$, and $0\in Z$ is the $0$-cycle. (Intuitively, one can think of this isomorphism as being induced by the zero locus map $(u,\nabla)\mapsto u^{-1}\{0\}$, but of course this will not define a continuous map into $Z$ in practice.) This isomorphism is nontrivial only when $k=1$ or $2$.

For $k=1,2$, to any class $\alpha\in \pi_k(Z,0)$, one can associate a min-max width
\begin{equation}\label{width.def.0}
{\bf W}(\alpha):=\inf_{\psi\in \alpha}\sup_{x\in S^k}\mathbb{M}(\psi(x))
\end{equation}
for the $(n-2)$-area functional. In practice, we work with the (essentially equivalent) discretized variant ${\bf W}^*(\alpha)$ of these min-max widths introduced by Almgren and Pitts (see \cite{Pitts}, or Section 5 below), which correspond to the masses of stationary $(n-2)$-varifolds. Likewise, for each nontrivial class $\beta\in \pi_k(\mathcal{M},*)$ and $\epsilon>0$, one can consider the min-max energies
$$\mathcal{E}_{\epsilon}(\beta):=\inf_{F\in \beta}\max_{x\in S^k}E_{\epsilon}(F(x)),$$
which are realized as critical values of the functionals $E_{\epsilon}$. (In practice, rather than working with families in $\pi_k(\mathcal{M},*)$, in Section 5 we work equivalently with the families $[0,1]\to X$ and $\bar D^2\to X$ giving their lifts in the Banach space $X$.) In rough terms, the results of Section 5 yield the following comparison.

\begin{theorem}[Min-max comparison]\label{WidthCompThm}
Let $\mathcal{M}$ be the moduli space of pairs $(u,\nabla)$ with $u\not\equiv 0$ and $Z$ the space of integral $(n-2)$-boundaries as above. With respect to the aforementioned isomorphism $\Phi: \pi_k(\mathcal{M},*)\to \pi_k(Z,0)$, the min-max energies satisfy
\begin{equation}\label{width.comp}
\liminf_{\epsilon\to 0}\mathcal{E}_{\epsilon}(\beta)\geq {\bf W}^*(\Phi(\beta))
\end{equation}
for any $\beta\in \pi_k(\mathcal{M},*).$ In particular, the mass of the stationary integral $(n-2)$-varifold $V_{YMH}$ associated to the critical points $(u_{\epsilon},\nabla_{\epsilon})$ by the results of \cite{PigatiStern} is bounded below by the mass of the corresponding min-max $(n-2)$-varifold $V_{GMT}$ produced by Almgren's min-max construction.
\end{theorem}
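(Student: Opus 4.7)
The plan is to convert a nearly optimal $E_\epsilon$-sweepout into an Almgren--Pitts discrete sweepout of integral $(n-2)$-cycles in the class $\Phi(\beta)$, whose maximum mass is controlled by $\frac{1}{2\pi}\mathcal{E}_\epsilon(\beta) + o_\epsilon(1)$. Fixing $\delta > 0$, for each small $\epsilon > 0$ I would first select a continuous lift $F_\epsilon : S^k \to X \setminus X_0$ representing $\beta$ with $\max_{x \in S^k} E_\epsilon(F_\epsilon(x)) \leq \mathcal{E}_\epsilon(\beta) + \delta$.

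The key step is to assign to each pair $F_\epsilon(x) = (u_x, \nabla_x)$ an integral $(n-2)$-cycle $\Gamma_\epsilon(x)$ with $2\pi\,\mathbb{M}(\Gamma_\epsilon(x)) \leq E_\epsilon(F_\epsilon(x)) + o_\epsilon(1)$ uniformly in $x$, and with flat distance from $\frac{1}{2\pi} J(u_x, \nabla_x)$ tending to zero as $\epsilon \to 0$. Theorem \ref{GammaConvThm}(i) provides such a cycle subsequentially for each fixed $x$, and a compactness/contradiction argument over $S^k$ upgrades this to uniform parameter-dependent control. With these approximants in hand, the uniform continuity of $F_\epsilon$ on $S^k$ lets me subdivide $S^k$ finely enough that, for adjacent vertices $x, y$, the pairs $F_\epsilon(x), F_\epsilon(y)$ are close in $X$ and hence the Jacobians $J(F_\epsilon(x))$, $J(F_\epsilon(y))$ are close as currents; exploiting the flexibility in the choice of $\Gamma_\epsilon(\cdot)$, one can further arrange $\Gamma_\epsilon(x), \Gamma_\epsilon(y)$ to be close in flat norm, producing a valid discrete sweepout $\tilde F_\epsilon$ in $Z$.

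To conclude, I would verify that $\tilde F_\epsilon$ represents $\Phi(\beta) \in \pi_k(Z,0)$. Since $\Phi$ is described in Section 5 as being geometrically induced by the zero-locus construction, and is nontrivial only in degrees $k = 1, 2$, this check can be carried out directly: at the base point $*$ one has $|u| \equiv 1$, $\nabla u = 0$, so $J(u, \nabla) = 0$ and $\Gamma_\epsilon = 0$; for $k = 1$ the class is detected by the winding of the gauge around a loop, and for $k = 2$ by the first Chern class of the associated bundle over $S^2 \times M$. Sending first $\epsilon \to 0$ and then $\delta \to 0$ yields $\mathbf{W}^*(\Phi(\beta)) \leq \liminf_{\epsilon \to 0} \mathcal{E}_\epsilon(\beta)$, and the varifold comparison follows by combining this width inequality with the concentration results of \cite{PigatiStern}.

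The main technical obstacle is the parameter-uniform integer approximation step: Theorem \ref{GammaConvThm}(i) is formulated as a subsequential compactness statement for a fixed family of pairs, whereas producing a valid Almgren--Pitts discrete sweepout requires integer cycles whose mass is pointwise controlled by the energy and whose flat norm varies in a controlled way across $S^k$. Turning the $\Gamma$-convergence into a sufficiently quantitative approximation---either through an explicit construction of integer cycles near $\frac{1}{2\pi}J(u, \nabla)$ (building on the singular unit section / codimension-two Caccioppoli analog discussed in the excerpt) or through a careful compactness/contradiction scheme over $S^k$---is likely to be the technical heart of the proof.
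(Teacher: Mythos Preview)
Your broad strategy---convert a near-optimal $E_\epsilon$-sweepout into a discrete Almgren--Pitts sweepout of integral $(n-2)$-cycles with controlled mass, then identify the homotopy class---is the same as the paper's. However, there is a genuine gap: you never address the \emph{no concentration of mass} condition. The Almgren--Pitts widths ${\bf W}^*$ are defined via discrete families satisfying not only a fineness bound ${\bf f}(\phi)<\delta$ but also the density bound $\sup_{r>\delta} r^{2-n}{\bf m}(\phi,r)\le C_0$; without the latter, the interpolation theorems of \cite{WillmoreConj,LMN} that connect flat-fine discrete families to the actual min-max widths simply do not apply. Nothing in your outline prevents the energy of $F_\epsilon(x)$ (and hence the mass of the associated cycle) from concentrating at scales $\ll 1$. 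The paper's remedy is Lemma~\ref{tamed.fam}: it runs the $L^2$ gradient flow of $E_\epsilon$ for a fixed time and uses a Huisken-type monotonicity formula (Proposition~\ref{grad.dens.bd}) to obtain uniform $(n-2)$-density bounds $r^{2-n}\int_{B_r}e_\epsilon\le C(M,\Lambda)$ for all $r\ge\epsilon$, while not increasing the maximum energy and preserving the homotopy class. This is an essential ingredient that your proposal omits entirely.

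A second, smaller gap concerns the fill-ins. You propose to arrange $\Gamma_\epsilon(x)$ and $\Gamma_\epsilon(y)$ to be flat-close by ``exploiting flexibility'' in the choice of approximating cycle; but a compactness/contradiction argument does not by itself produce small-mass integral $(n-1)$-currents $S$ with $\partial S=\Gamma_\epsilon(y)-\Gamma_\epsilon(x)$, which is what the Almgren isomorphism requires. The paper instead works with the singular unit sections $v_a\in\mathcal{U}_p(L)$ furnished by Lemma~\ref{lbd.v2} (a quantitative, pointwise version of the liminf analysis), and then builds \emph{explicit} fill-ins: for adjacent vertices it constructs interpolation maps $w_{a,b}\in W^{1,p}(M\times[0,1],S^1)$ (Lemma~\ref{interp.lem}) whose projected Jacobians $\frac{1}{2\pi}\pi_*[J(w_{a,b})]$ are the required small-mass $(n-1)$-currents. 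These concatenated maps are also what make the homotopy-class identification ($\Psi(\beta)=\lambda$ for $k=1$, $\Psi(\beta)=[M]$ for $k=2$) a direct computation via Lemma~\ref{square.fill}, rather than the heuristic you sketch.
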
 

While we have restricted our attention here to the comparison of one- and two-parameter min-max constructions associated to the homotopy groups of $\mathcal{M}$ and $Z$, we believe that the techniques used in the proof of Theorem \ref{WidthCompThm} should apply to all natural min-max constructions for the energies $E_{\epsilon}$, with appropriate modifications to the topological part of the argument. In particular, while Theorem \ref{WidthCompThm} can be compared to \cite[Proposition 8.19]{Guaraco} in the Allen--Cahn setting, we expect that the same ideas can be used to prove an analog of \cite[Theorem 6.1]{GasparGuaraco} treating higher-parameter families detecting cohomology classes in $H^*(\mathcal{M};\mathbb{Z})$ of higher degree.

Moreover, let us point out that in the Allen--Cahn setting, Akashdeep Dey has recently succeeded in proving a bound in the opposite direction \cite{Dey}, concluding that the min-max energies for the Allen--Cahn functionals in fact \emph{coincide} with the corresponding Almgren--Pitts widths in the $\epsilon \to 0$ limit. Though establishing a codimension-two analog of Dey's bound for the self-dual Yang--Mills--Higgs functionals lies beyond the scope of the present paper, we optimistically conjecture that such an estimate should hold, giving equality in \eqref{width.comp}. 

A key element in the proof of the min-max comparison theorem is the  $L^2$ gradient flow associated to the Yang--Mills--Higgs energies: i.e., the following system of coupled nonlinear parabolic equations
\begin{align}
\left\{
\begin{aligned} 
\partial_t u_t&=-\nabla_t^*\nabla_tu_t+{\textstyle\frac{1}{2\epsilon^2}}(1-|u_t|^2)u_t, \\
\partial_t\alpha_t&=-d^*d\alpha_t+\epsilon^{-2}\langle iu_t,\nabla_tu_t\rangle, 
\end{aligned}
\right.
\end{align}
subject to some initial data $(u_0, \nabla_0=d-i\alpha_0)$. 
The necessity of its introduction is due to some technical difficulties emerging in the proof of Theorem \ref{WidthCompThm} when passing from maps continuous in the flat norm, which are given by the $\Gamma$-convergence theory, to maps continuous in the mass norm, the relevant ones in the Almgren--Pitts setting. Indeed, the former can exhibit a phenomenon called \textit{concentration of mass} whereby the energy density accumulates at small scales, preventing a direct application of the so-called \textit{interpolation theory} developed by Marques, Neves and collaborators, which would give a corresponding continuous map in the mass norm.

Since we expect the gradient flow of $E_\epsilon$ to approximate a (weak) mean curvature flow of codimension two, a Huisken-type monotonicity formula should be expected to hold, thus providing the desired $(n - 2)$-energy density bounds at all scales after running the flow for a fixed amount of time (uniformly in $\epsilon$). 
This provides us with a canonical regularization preventing concentration of mass, without increasing the total energy.
At the end of the paper, we check that the flow satisfies long-time existence, uniqueness and continuous dependence on the initial data. 



\subsection*{Acknowledgements} The authors thank Y. Liokumovich for answering questions about the interpolation results of \cite{LMN}. DS acknowledges the support of the National Science Foundation under grant DMS-2002055. DP acknowledges the support of the UK Engineering and Physical Sciences Research Council (EPSRC) grant EP/L016516/1 and would like to thank N. Wickramasekera for constant encouragement and interest in this work. 

\section{Notation and preliminaries} \label{SectionPreliminaries}

Let $(M^n,g)$ be a closed, oriented Riemannian manifold and let $L \rightarrow M$ be a complex line bundle over $M$, endowed with a hermitian structure $\langle \cdot, \cdot \rangle$. We will denote by $W \colon L \rightarrow \mathbb{R}$ the nonlinear potential 
\begin{equation*}
    W(u) := \frac{1}{4}(1-|u|^2)^2, 
\end{equation*}
and for a hermitian connection $\nabla$ on $L$, a section $u \in \Gamma(L)$, and a parameter $\epsilon \in (0, 1)$, we denote by $E_\epsilon(u, \nabla)$ the scaled Yang--Mills--Higgs energy 
\begin{equation} \label{e.bd}
    E_\epsilon(u, \nabla) := \int_M ( \vert \nabla u \vert^2 + \epsilon^2 \vert F_\nabla \vert^2 + \epsilon^{-2} W(u) ) \, \dvol_g = \int_M e_\epsilon(u, \nabla)\,\dvol_g, 
\end{equation}
where $\dvol_g$ denotes the volume form on $M$, $e_\epsilon(u, \nabla)$ is the energy density and $F_\nabla$ is the curvature of $\nabla$. As discussed in the introduction, working with $U(1)$-connections allows us to identify $F_\nabla$ with the real, closed, two-form $\omega=\omega_{\nabla}$ via  
\begin{equation} \label{defOmega}
    F_\nabla(X, Y)u = [\nabla_X, \nabla_Y]u - \nabla_{[X, Y]}u = - i \omega_{\nabla}(X, Y)u. 
\end{equation}
The Euler--Lagrange equations for critical points of \eqref{e.bd} are given by 
\begin{align} \label{EL}
	\left\{
    \begin{aligned}
    \nabla^* \nabla u & = {\textstyle\frac{1}{2\epsilon^2}} (1 - \vert u \vert^2) u, \\
    \epsilon^2 d^* \omega_\nabla & = \langle \nabla u, i u \rangle.
    \end{aligned}
    \right.
\end{align}
Here $\nabla^*$ denotes the formal adjoint of $\nabla$ and $d^*$ the formal adjoint of $d$. We refer to \cite[Section~2]{PigatiStern} for further details and to the appendix of the same paper for the regularity of solutions to these equations.  

A key feature of the energies $E_{\epsilon}$ is their \emph{gauge-invariance}: that is, for any $\phi\in \mathcal{G} = \Maps(M,S^1)$, the energy $E_{\epsilon}(u,\nabla)$ is invariant under the change of gauge
$$\phi\cdot (u,\nabla)=(\phi u, \nabla-i\phi^*(d\theta)),$$
corresponding to a fiberwise rotation of $L$. As discussed in the introduction, an important first step in understanding the $\Gamma$-convergence theory for $E_{\epsilon}$ is identifying an appropriate gauge-invariant analog of the Jacobian two-form $2du^1\wedge du^2$ for complex-valued maps. To this end, for a pair $(u,\nabla)$, we consider the two-forms $\psi(u,\nabla)$ given by
\begin{equation*}
\psi(u,\nabla)(X, Y) := 2 \langle i \nabla_X u, \nabla_Y u \rangle, 
\end{equation*}
for vector fields $X$ and $Y$, and define the \emph{gauge-invariant Jacobians}
\begin{equation*}
J(u, \nabla) := \psi(u,\nabla) + (1 - \vert u \vert^2) \omega_\nabla.
\end{equation*}
A straightforward computation shows that
\begin{equation}\label{bdry.char}
d\langle \nabla u, iu\rangle=\psi(u)-|u|^2\omega_\nabla=J(u,\nabla)-\omega_\nabla,
\end{equation}
from which we deduce that $J(u,\nabla)$ is closed and cohomologous to $\omega$. Moreover, as mentioned in the introduction, it is easy to check that $\psi(u,\nabla)$ satisfies the pointwise estimate $|\psi(u,\nabla)|\leq |\nabla u|^2$, which together with Young's inequality implies
\begin{equation} \label{JboundedbyE}
\vert J(u,\nabla) \vert \leq |\nabla u|^2+\epsilon^2|\omega_\nabla|^2+\frac{1}{4\epsilon^2}(1-|u|^2)^2= e_{\epsilon}(u,\nabla),
\end{equation}
so that $J(u,\nabla)$ has $L^1$ norm bounded above by $E_{\epsilon}(u,\nabla)$. Throughout the paper, we identify $J(u,\nabla)$ with an $(n-2)$-current, with the assignment
\begin{equation*}
	\langle J(u,\nabla),\eta \rangle := \int_M J(u,\nabla)\wedge\eta
\end{equation*}
for all $\eta\in\Omega^{n-2}(M)$; under this identification, note that the mass of $J(u,\nabla)$ is precisely
$$\mathbb{M}(J(u,\nabla))=\|J(u,\nabla)\|_{L^1(M)}\leq E_{\epsilon}(u,\nabla).$$

Finally, given a smooth reference connection $\nabla_0$ on $L$ with associated curvature two-form $\omega_0$, it will be useful to note that, by \eqref{bdry.char}, we can write
\begin{equation}\label{Jbeta}
J(u,\nabla)=d(\beta(u,\nabla))+\omega_0
\end{equation}
where
\begin{equation}\label{beta.def}
\beta(u,\nabla=\nabla_0-i\alpha):=\langle \nabla u,iu\rangle+\alpha=\langle \nabla_0u,iu\rangle+(1-|u|^2)\alpha,
\end{equation}
implicitly using the fact that $\nabla$ can be written as $\nabla_0 - i \alpha$, for $\alpha \in \Omega^1(M)$, so that $\omega_{\nabla}=\omega_0+d\alpha$.

\subsection{Notions from geometric measure theory}
For the convenience of the reader, we collect here some terminology and notation from geometric measure theory used throughout the paper. We follow \cite{Simon} and we refer the reader to it for further details. 

We denote by $\mathcal{I}_k(M; \mathbb{Z})$ the space of \emph{integer rectifiable $k$-currents with finite mass}. Recall that an \emph{integral $k$-current} is an integer rectifiable $k$-current whose \emph{boundary has finite mass} (and, as a consequence, is itself an integer rectifiable $(k-1)$-current). We denote by ${\bf I}_{k}(M; \mathbb{Z})$ the space of $k$-dimensional integral currents in $M$ and by $\mathcal{Z}_k(M; \mathbb{Z})$ the subset of those currents $T \in {\bf I}_{k}(M; \mathbb{Z})$ satisfying $\partial T = 0$.

Given $T \in {\bf I}_{k}(M; \mathbb{Z})$ we denote by $\vert T \vert$ the associated integral varifold and by $\Vert T \Vert$ the induced Radon measure on $M$. The definition of mass used in this paper is
\begin{align*}
	&\mathbb{M}(T) := \sup \{T(\phi)\mid\phi\in\Omega^k(M),\,\|\phi\|_{C^0(M)}\le 1\},
\end{align*}
where the last norm is understood with respect to the Euclidean norm on $k$-covectors. Setting $\mathbb{M}(S, T) := \mathbb{M}(S - T)$ for $S, T \in \mathcal{I}_{k}(M; \mathbb{Z})$ we obtain a metric on $\mathcal{I}_{k}(M; \mathbb{Z})$ known as the \emph{mass metric}. We can topologize the space $\mathcal{I}_{k}(M; \mathbb{Z})$ differently via the so-called \emph{flat distance}
\begin{equation*}
\mathcal{F}(S,T) := \inf \{\mathbb{M}(P) + \mathbb{M}(Q) \mid S - T = P + \partial Q,\, P \in \mathcal{I}_{k}(M; \mathbb{Z}),\, Q \in \mathcal{I}_{k + 1}(M; \mathbb{Z})  \}, 
\end{equation*}
for $S,T \in \mathcal{I}_{k}(M; \mathbb{Z})$. Writing $\mathcal{F}(T) = \mathcal{F}(T, 0)$, note that we trivially have 
\begin{equation*}
\mathcal{F}(T) \leq \mathbb{M}(T) \quad \text{for all } T \in \mathcal{I}_{k}(M; \mathbb{Z}). 
\end{equation*}
Some further concepts from geometric measure theory relevant to the min-max comparison are introduced in Section \ref{min-max.sec} below.


\section{The liminf inequality} \label{SecLimInf}

\subsection{The distributional gauge-invariant Jacobian and singular unit sections}\hspace{30mm}
In the classical $\Gamma$-convergence theory for the Allen--Cahn energies, it is important to identify the space of $(n-1)$-boundaries in $M$ with the distributional derivatives of functions in $BV(M,\{-1,1\})$, which arise as limits of the functions $\Phi(v_{\epsilon})$ for real-valued functions $v_{\epsilon}:M\to \mathbb{R}$ with $F_{\epsilon}(v_{\epsilon})=O(1)$, where $\Phi(s):=\int_0^s\sqrt{2W(t)}\,dt\,/\int_0^1\sqrt{2W(t)}\,dt$. Similarly, the study of $\Gamma$-convergence for functionals of Ginzburg--Landau type is closely related to the theory of distributional Jacobians for circle-valued (and, more generally, sphere-valued \cite{ABO2, ABO1, JerrardSoner2}) maps, but the structure theory of these Jacobians does not play a direct role in the $\Gamma$-convergence proofs, since these results are not typically accompanied by compactness results for the given sequence of complex-valued maps.

For our results, it will likewise be useful to identify the space $\mathcal{Z}_{n-2}(M;\mathbb{Z})$ of integral $(n-2)$-cycles in $M$ with the topological singularities (distributional Jacobians) of certain singular unit sections of hermitian line bundles on $M$, arising as a limit of the two-forms $J(u,\nabla)$ for smooth pairs $(u,\nabla)$. To this end, we seek to extend the definition of the $(n-2)$-current $J(u,\nabla)$ to a larger class of pairs $(u,\nabla)$ of lower regularity, generalizing the distributional Jacobian for complex-valued maps.

First, we need to understand the continuity of $J(u,\nabla)$ as a map into the space of $(n-2)$-currents $\mathcal{D}_{n-2}(M)$ with the $(C^1)^*$ metric. Given $p\in (1,\infty)$ and a fixed reference connection $\nabla_0$ on $L\to M$, we introduce the norm
\begin{equation*}
\|(u,\nabla)\|_p:=\|u\|_{L^p(M)}+\|\nabla_0u\|_{L^p(M)}+\|\nabla-\nabla_0\|_{L^p(M)}
\end{equation*}
on the space of smooth pairs $u\in \Gamma(L)$ and $\nabla=\nabla_0-i\alpha$, and denote by $X_p(L)$ the metric space obtained as the completion of the space of smooth pairs
$$(u,\nabla)=(u,\nabla_0-i\alpha),\text{ where }|u|\leq 1$$
with respect to the norm $\|\cdot\|_p$. Note that, in a local trivialization, elements of $X_p(L)$ can be identified with pairs $(u,\alpha)$ where $\alpha$ is a one-form in $L^p$ and $u$ is a $W^{1,p}$ map to the unit disk $\bar D\subset \mathbb{C}$. The precise definition of the norm $\Vert \cdot \Vert_p$ is somewhat arbitrary, and other equivalent norms would work just as well. With respect to this norm, it is not difficult to check that the assignment $(u,\nabla)\mapsto J(u,\nabla)$ satisfies the desired continuity properties, summarized in the following proposition.

\begin{proposition} 
For a fixed reference connection $\nabla_0$ on $L\to M$ and $p\in (1,2)$, given pairs $(u,\nabla)$ and $(v,\nabla')$ satisfying $|u|\leq 1$, $|v|\leq 1$, and $\|(u,\nabla)-(v,\nabla')\|_p\leq 1$, we see that the one-forms $\beta(u,\nabla)$ and $\beta(v,\nabla')$ given by \eqref{beta.def} satisfy
\begin{equation}\label{beta.diff}
\|\beta(u,\nabla)-\beta(v,\nabla')\|_{L^1(M)}\leq C(p)(1+\|(u,\nabla)\|_p)\|(u,\nabla)-(v,\nabla')\|_p^{p-1}.
\end{equation}
Consequently, the assignment $(u,\nabla)\mapsto J(u,\nabla)$ extends continuously to a map
$$(X_p(L),\|\cdot\|_p)\to (\mathcal{D}_{n-2}(M),(C^1)^*)$$
where $(\mathcal{D}_{n-2}(M),(C^1)^*)$ denotes the space of $(n-2)$-currents equipped with the $(C^1(M))^*$ norm.
\end{proposition}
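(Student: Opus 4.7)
The plan is to reduce everything to the pointwise identity $J(u,\nabla)=d\beta(u,\nabla)+\omega_0$ with $\beta(u,\nabla)=\langle\nabla_0u,iu\rangle+(1-|u|^2)\alpha$ already recorded in \eqref{Jbeta}--\eqref{beta.def}. The estimate \eqref{beta.diff} is the only real content; once it is in place, the continuous extension of $J$ is formal because smooth pairs are dense in $X_p(L)$ and distributional exterior derivation sends $L^1$ to $(C^1)^*$.

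To prove \eqref{beta.diff}, write
\begin{align*}
\beta(u,\nabla)-\beta(v,\nabla')
&=\langle\nabla_0(u-v),iu\rangle+\langle\nabla_0v,i(u-v)\rangle\\
&\quad +(1-|u|^2)(\alpha-\alpha')+(|v|^2-|u|^2)\alpha'.
\end{align*}
Using $|u|,|v|\le1$ and $\bigl||v|^2-|u|^2\bigr|\le 2|u-v|$, the four integrands are bounded pointwise by $|\nabla_0(u-v)|$, $|\nabla_0v|\,|u-v|$, $|\alpha-\alpha'|$, and $2|\alpha'|\,|u-v|$. The two terms that contain $u-v$ undifferentiated are handled by H\"older with exponents $(p,p')$, $p':=p/(p-1)$, and since $|u-v|\le 2$ while $p'\ge p$, interpolation gives
\begin{equation*}
\|u-v\|_{L^{p'}}\le\|u-v\|_{L^\infty}^{1-p/p'}\|u-v\|_{L^p}^{p/p'}\le 2^{2-p}\|u-v\|_{L^p}^{p-1},
\end{equation*}
using the key arithmetic identity $p/p'=p-1$. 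The two terms of the form $\|\nabla_0(u-v)\|_{L^1}$ and $\|\alpha-\alpha'\|_{L^1}$ are controlled by $|M|^{1/p'}\|\cdot\|_{L^p}$, and since $\|(u,\nabla)-(v,\nabla')\|_p\le 1$ and $p-1\le 1$ we have $\|\cdot\|_{L^p}\le\|\cdot\|_{L^p}^{p-1}$ for these quantities. Finally $\|\nabla_0v\|_{L^p}$ and $\|\alpha'\|_{L^p}$ are absorbed into the factor $1+\|(u,\nabla)\|_p$ via the triangle inequality and the assumption $\|(u,\nabla)-(v,\nabla')\|_p\le 1$. Collecting gives \eqref{beta.diff}.

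For the continuity statement, observe that $\beta(u,\nabla)\in L^p(M)\subset L^1(M)$ is well-defined for every $(u,\nabla)\in X_p(L)$ (with $|u|\le 1$ and $\alpha\in L^p$), and the map $(u,\nabla)\mapsto\beta(u,\nabla)\in L^1$ is, by \eqref{beta.diff}, locally H\"older continuous of exponent $p-1$. Then $J(u,\nabla):=d\beta(u,\nabla)+\omega_0$ is defined distributionally, and for any $\eta\in\Omega^{n-2}(M)$, Stokes' theorem on the closed manifold $M$ yields
\begin{equation*}
\bigl\langle J(u,\nabla)-J(v,\nabla'),\eta\bigr\rangle=\int_M\bigl(\beta(u,\nabla)-\beta(v,\nabla')\bigr)\wedge d\eta,
\end{equation*}
which is bounded by $C\|\beta(u,\nabla)-\beta(v,\nabla')\|_{L^1}\|\eta\|_{C^1(M)}$. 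This yields the desired $(C^1)^*$-continuity, and the extension from smooth pairs to all of $X_p(L)$ follows by density.

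The only genuinely delicate point is the interpolation step: the requirement that the H\"older exponent produced by bounding $\|u-v\|_{L^{p'}}$ match exactly the target exponent $p-1$ forces the use of the $L^\infty$ bound $|u-v|\le 2$ together with the identity $p/p'=p-1$, and it is precisely this matching that fixes the restriction $p\in(1,2)$ in the statement. Everything else is bookkeeping with H\"older's inequality and the elementary observation that for quantities of size at most $1$, the $L^p$ norm is dominated by its $(p-1)$-th power.
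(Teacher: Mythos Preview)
Your proof is correct and follows essentially the same approach as the paper: the same four-term decomposition of $\beta(u,\nabla)-\beta(v,\nabla')$, the same H\"older estimate with exponents $(p,p')$, the same interpolation $\|u-v\|_{L^{p'}}\le C\|u-v\|_{L^p}^{p-1}$ using $|u-v|\le 2$, and the same passage from $\beta$ to $J$ via Stokes. Your closing remark explaining why the interpolation forces $p\in(1,2)$ is a nice addition not made explicit in the paper.
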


\begin{proof} Writing $\nabla=\nabla_0-i\alpha$ and $\nabla'=\nabla_0-i\eta$ for $\alpha,\eta\in \Omega^1(M)$, it follows from \eqref{beta.def} that
\begin{align*}
\beta(u,\nabla)-\beta(v,\nabla')&=\langle \nabla_0u,iu\rangle-\langle \nabla_0v,iv\rangle+(1-|u|^2)\alpha-(1-|v|^2)\eta\\
&=\langle \nabla_0(u-v),iu\rangle+\langle \nabla_0v, i(u-v)\rangle\\
&\quad+(1-|u|^2)(\alpha-\eta)+(|v|^2-|u|^2)\eta.
\end{align*}
In particular, since $|u|\leq 1$ and $|v|\leq 1$, letting $p'$ denote the H\"{o}lder conjugate of $p$, we deduce that
\begin{align*}
\int_M|\beta(u,\nabla)-\beta(v,\nabla')|&\le \int_M (|\nabla_0(u-v)|+|\nabla_0v||u-v|+|\alpha-\eta|+2|\eta||u-v|)\\
&\le \|\nabla_0(u-v)\|_{L^1(M)}+\|\nabla_0v\|_{L^p(M)}\|u-v\|_{L^{p'}(M)}\\
&\quad+\|\nabla-\nabla'\|_{L^1(M)}+2\|\nabla'-\nabla_0\|_{L^p(M)}\|u-v\|_{L^{p'}(M)}\\
&\le C[\|(u,\nabla)-(v,\nabla')\|_p+(\|(u,\nabla)\|_p+\|(v,\nabla')\|_p)\|u-v\|_{L^{p'}(M)}]\\
&\le C[\|(u,\nabla)-(v,\nabla')\|_p+(\|(u,\nabla)\|_p+\|(v,\nabla')\|_p)\|u-v\|_{L^p(M)}^{p-1}]
\end{align*}
for a constant $C=C(p,M)$, where we used the fact that $\Vert u - v \Vert_{L^\infty(M)} \leq 2$ in the last inequality. 
Assuming that $\|(u,\nabla)-(v,\nabla')\|_p\leq 1$, the estimate \eqref{beta.diff} easily follows.

Now, by the characterization \eqref{Jbeta} of $J(u,\nabla)$, for any $\zeta\in \Omega^{n-2}(M)$, we have
\begin{align*}
|\langle J(u,\nabla)-J(v,\nabla'),\zeta\rangle|&=\Big|\int_Md(\beta(u,\nabla)-\beta(v,\nabla'))\wedge \zeta\Big|\\
&=\Big|\int_M (\beta(u,\nabla)-\beta(v,\nabla'))\wedge d\zeta\Big|\\
&\le \|\beta(u,\nabla)-\beta(v,\nabla)\|_{L^1(M)}\|d\zeta\|_{C^1(M)}.
\end{align*}
The second equality follows from Stokes' theorem. 
Together with the estimate \eqref{beta.diff}, this implies that
$$\|J(u,\nabla)-J(v,\nabla')\|_{(C^1(M))^*}\leq C(p,M) (1+\|(u,\nabla)\|_p)\|(u,\nabla)-(v,\nabla')\|_p$$
when $\|(u,\nabla)-(v,\nabla')\|_p\leq 1$. In particular, the assignment $(u,\nabla)\mapsto J(u,\nabla)$ is continuous with respect to the norms $\|\cdot \|_p$ and $(C^1(M))^*$, and therefore admits the desired extension
\begin{align*}
	&(X_p(L),\|\cdot\|_p)\to (\mathcal{D}_{n-2}(M), (C^1)^*). \qedhere
\end{align*}
\end{proof}

Consider now the subset of $X_p(L)$ given by
$$\mathcal{V}_p(L):=\{(u,\nabla)\in X_{p}(L) : |u|\equiv 1\text{ almost everywhere}\},$$
i.e., the set of pairs $(u,\nabla)\in X_p(L)$ where $u$ belongs to the space
$$\mathcal{U}_p(L):=\{u\in W^{1,p}(M,L) : |u|\equiv 1\text{ almost everywhere}\}$$
of $W^{1,p}$ unit sections. Note that for any $(u,\nabla)\in \mathcal{V}_p(L)$ we have
$$\beta(u,\nabla)=\beta(u,\nabla_0),$$
so we can view both $\beta$ and $J=d\beta+\omega_0$ as functions on $\mathcal{U}_p(L)$, \emph{independent of the connection} $\nabla$.
Notice that the definition of $\beta(u)$ still depends on the initial choice of reference connection $\nabla_0$, but of course the assignment $\mathcal{U}_p\ni u\mapsto J(u)$ remains gauge-invariant and independent of $\nabla_0$. In particular, in any local trivialization---in which $u$ becomes identified with a $W^{1,p}$ map to $S^1$ and $\nabla_0=d-i\alpha_0$---we have $\beta(u)=\langle du,iu\rangle-\alpha_0$, and $J(u)=d\langle du,iu\rangle$ coincides with \emph{the standard distributional Jacobian}.

The remainder of the subsection is devoted to recording some key properties of the operator $J: \mathcal{U}_p(L)\to \mathcal{D}_{n-2}(M)$. At the local level, note that this reduces to the study of topological singularities for maps in $W^{1,p}(M,S^1)$, and the arguments that follow are largely drawn from \cite{ABO1} and \cite{JerrardSoner2}. 

\begin{proposition}\label{j.props} For any $u,v\in \mathcal{U}_p(L)$, there exists an integer rectifiable $(n-1)$-current $S\in \mathcal{I}_{n-1}(M;\mathbb{Z})$ of mass
$$\mathbb{M}(S)\leq \frac{1}{2\pi}\int_M |\nabla_0(u+v)||u-v|$$
such that
\begin{equation*}
J(u)-J(v)=2\pi \partial S, 
\end{equation*}
as currents. Moreover, $J(u)=J(v)$ if and only if $u=\phi e^{i\psi}v$ for some $\phi:M\to S^1$ harmonic and $\psi \in W^{1,p}(M,\mathbb{R})$---i.e., if $u$ and $v$ differ by a change of gauge.
\end{proposition}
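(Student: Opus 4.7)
The plan is to reduce the proof to the distributional Jacobian theory for $W^{1,p}$ maps into $S^1$ (as developed in \cite{ABO1, JerrardSoner2}). Since $|u| = |v| = 1$ almost everywhere and the hermitian line bundle $L\otimes\bar L$ is canonically trivial, the product $w := u\bar v$ defines an element of $W^{1,p}(M, S^1)$. A direct local computation in any trivialization $\nabla_0 = d - i\alpha_0$ shows that the $\alpha_0$ contributions to $\langle\nabla_0 u, iu\rangle$ and $\langle\nabla_0 v, iv\rangle$ cancel (as $|u|^2 = |v|^2 = 1$), giving $\beta(u) - \beta(v) = \langle du, iu\rangle - \langle dv, iv\rangle = \langle dw, iw\rangle$. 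Combining with the identity $J(\cdot) = d\beta(\cdot) + \omega_0$ from \eqref{Jbeta} yields the global formula
\begin{equation*}
    J(u) - J(v) = d\langle dw, iw\rangle
\end{equation*}
as $(n-2)$-currents on $M$, reducing the problem to an analysis of the distributional Jacobian of the single $S^1$-valued map $w$.

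To produce the integer rectifiable current $S$, I would use the homotopy $H(t,x) := (1-t)u(x) + tv(x)$ on $[0,1]\times M$. A short computation using $|u| = |v| = 1$ shows that the zero locus $Z := \{H = 0\}$ collapses onto the single slice $\{1/2\}\times\{u+v=0\}$; on the complement the normalized section $H/|H|$ is a $W^{1,p}$ unit section whose distributional Jacobian equals $2\pi[Z]$ with an integer orientation inherited from the degree of $H$ around $Z$, by the Jacobian theory of \cite{ABO1, JerrardSoner2}. Pushing forward by the projection $\pi\colon[0,1]\times M\to M$ gives an integer rectifiable $(n-1)$-current $S := -\pi_*[Z]$ supported on $\{u+v=0\}$, and a Stokes-type argument exploiting the closedness of $J(H/|H|)$ on $\{H\ne 0\}$, together with the identification of its boundary traces on $\{0,1\}\times M$ with $J(u)$ and $J(v)$, yields $2\pi\,\partial S = J(u) - J(v)$.

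The mass bound comes from a weighted coarea on the real-valued function $|u+v|\colon M\to[0,2]$. The Pythagorean identity $|u+v|^2 + |u-v|^2 = 4$ (a consequence of $|u|^2 = |v|^2 = 1$) gives $|u-v| \equiv \sqrt{4-r^2}$ on each level set $\{|u+v| = r\}$, and Kato's inequality produces $|d|u+v|| \le |\nabla_0(u+v)|$ pointwise. The coarea formula then yields
\begin{equation*}
    \int_0^2 \sqrt{4-r^2}\,H^{n-1}(\{|u+v|=r\})\,dr \;=\; \int_M |u-v|\,|d|u+v|| \;\le\; \int_M |u-v|\,|\nabla_0(u+v)|.
\end{equation*}
A geometric/topological argument — essentially, that near transverse zeros of $u+v$ the small level sets appear as \emph{two} parallel copies of $\{u+v=0\}$, and more generally that $\{|u+v|<r\}$ is a two-sided neighborhood — shows $H^{n-1}(\{|u+v|=r\}) \ge 2\mathbb{M}(S)$ for a.e.\ $r\in(0,2)$. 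Since $\int_0^2\sqrt{4-r^2}\,dr = \pi$, this produces the desired bound $2\pi\mathbb{M}(S) \le \int_M|\nabla_0(u+v)|\,|u-v|$.

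For the rigidity assertion, $J(u) = J(v)$ is equivalent to $\langle dw, iw\rangle$ being a closed $L^p$ 1-form on $M$. Hodge decomposition for $L^p$ 1-forms gives $\langle dw, iw\rangle = \eta + d\psi$ with $\eta$ harmonic and $\psi\in W^{1,p}(M,\R)$, and integrality of $\frac{1}{2\pi}[\eta]\in H^1(M;\R)$ (inherited from the $S^1$-valued nature of $w$) forces $\eta = \phi^*(d\theta)$ for the unique harmonic map $\phi\colon M\to S^1$ in the corresponding homotopy class; consequently $w = \phi e^{i\psi}$, whence $u = (\phi e^{i\psi})v$ as required. The converse is an immediate computation. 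The main technical obstacle throughout is the factor of two in the mass bound: producing the exact constant $1/(2\pi)$ relies on the careful two-sidedness of the tubular neighborhood of the codimension-one zero locus of $u+v$, which must be argued globally (not just near generic points).
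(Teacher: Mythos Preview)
Your reduction to the $S^1$-valued map $w=u\bar v$ and the identification $J(u)-J(v)=Jw$ are correct, and your rigidity argument is essentially the paper's. The gap is in the mass bound. Your current $S$ is the specific slice $w^{-1}\{-1\}=\{u+v=0\}$, and the inequality $\mathcal H^{n-1}(\{|u+v|=r\})\ge 2\,\mathbb M(S)$ for a.e.\ $r$ --- which you yourself flag as the obstacle --- is not justified and need not hold: since $\{|u+v|=r\}=w^{-1}\{e^{i\theta_r}\}\cup w^{-1}\{e^{-i\theta_r}\}$, the inequality would require every such slice to have at least the mass of the particular slice $w^{-1}\{-1\}$, which has no reason to be true. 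More fundamentally, your choice $w=u\bar v$ does \emph{not} satisfy $|dw|\le |\nabla_0(u+v)||u-v|$ pointwise: in a trivialization with $\nabla_0=d$, take $u=e^{it}$, $v\equiv 1$ on a circle, so that $|dw|=|\langle du,iu\rangle-\langle dv,iv\rangle|=1$ while $|d(u+v)||u-v|=2|\sin(t/2)|$, which is smaller near $t=0$.

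The paper's remedy is a single algebraic modification. Replacing $w$ by $\Phi:=e^{-i\langle u,iv\rangle}\,u\bar v$ leaves the distributional Jacobian unchanged (the extra phase is the exponential of a real $W^{1,p}$ function), while a direct calculation gives
\[
\langle d\Phi,i\Phi\rangle=\beta(u)-\beta(v)-d\langle u,iv\rangle=\langle\nabla_0(u+v),i(u-v)\rangle.
\]
Hence $|d\Phi|\le|\nabla_0(u+v)||u-v|$ holds \emph{pointwise}, and the existence of $S$ together with the sharp constant $1/(2\pi)$ follows immediately from the coarea formula for $W^{1,1}$ maps into $S^1$ \cite[Theorem~3.8]{ABO2}, with no homotopy construction or two-sidedness argument required.
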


\begin{proof} To prove the first statement, we introduce the map
$$\Phi: \mathcal{U}_p(L)\times \mathcal{U}_p(L)\to W^{1,p}(M,S^1)$$
given by setting
$$\Phi(u,v):=e^{-i\langle u,iv\rangle}u\bar{v}$$
in any local trivialization; indeed, note that the complex-valued map $u\bar{v}$ is invariant under change of gauge. By direct computation, one can check that the map $w:=\Phi(u,v)$ satisfies the identity
\begin{equation*}
\langle dw,iw\rangle =\beta(u)-\beta(v)-d\langle u,iv\rangle=\langle \nabla_0(u+v),i(u-v)\rangle. 
\end{equation*}
Hence
$$\int_M |dw| \leq \int_M |\nabla_0(u+v)||u-v|,$$
and the distributional Jacobian $Jw=d\langle dw,iw\rangle$ satisfies
$$Jw=d[\beta(u)-\beta(v)]=J(u)-J(v).$$
By \cite[Theorem 3.8]{ABO2}, we can appeal to the coarea formula for maps in $W^{1,1}(M,S^1)$ to deduce the existence of an integer rectifiable current $S\in \mathcal{I}_{n-1}(M;\mathbb{Z})$ of mass
$$\mathbb{M}(S)\leq \frac{1}{2\pi}\int_M|dw|\leq \int_M|\nabla_0(u+v)||u-v|$$
such that
$$2\pi \partial S=Jw=J(u)-J(v),$$
proving the first part of the proposition.

For the latter statement, note that $J(u)-J(v)=0$ if and only if the map $w=\Phi(u,v)\in W^{1,p}(M,S^1)$ satisfies $Jw=0$. But it is easy to check (cf.\ \cite{Demengel}) that a map $w\in W^{1,p}(M,S^1)$ satisfies $Jw=0$ if and only if $w=\phi e^{i\psi}$ for some $\phi:M\to S^1$ harmonic and $\psi\in W^{1,p}(M,\mathbb{R})$.
Indeed, if $Jw=0$ then the one-form $\langle dw,iw \rangle$ is closed, and thus decomposes as $h+d\psi$ with $h$ harmonic, so that $\phi=e^{-i\psi}w$ is a harmonic map. The reverse direction is immediate.
\end{proof}

\begin{corollary}\label{same.class} If $u\in \mathcal{U}_p(L)$ is such that $J(u)$ has finite mass, then $\frac{1}{2\pi}J(u)$ is an integral $(n-2)$-cycle in the homology class dual to $c_1(L)\in H^2(M;\mathbb{Z})$.
\end{corollary}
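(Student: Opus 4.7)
The plan is to combine the identity $J(u) = \omega_0 + d\beta(u)$ from \eqref{Jbeta} with Proposition \ref{j.props}, using a geometrically natural reference element of $\mathcal{U}_p(L)$ built from a transverse smooth section of $L$. First, $J(u)$ is automatically closed as a current: $\omega_0$ is a smooth closed two-form and $d(d\beta(u)) = 0$ holds distributionally, so $dJ(u)=0$ and the associated $(n-2)$-current has zero boundary. Moreover, the de Rham class of $J(u)$ equals $[\omega_0] = 2\pi c_1(L)$, so only the integer rectifiability and the integer-level homology identity remain to be verified.

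To address these, I would construct an explicit $v \in \mathcal{U}_p(L)$ whose distributional Jacobian realizes a geometric cycle Poincar\'e dual to $c_1(L)$. Choose a smooth section $\sigma \in \Gamma(L)$ transverse to the zero section (such $\sigma$ exists by standard transversality), and let $\Sigma := \sigma^{-1}(0)$, a smooth closed codimension-two submanifold whose fundamental class $[\Sigma] \in H_{n-2}(M;\mathbb{Z})$ is, by the classical geometric characterization of Chern classes, Poincar\'e dual to $c_1(L)$. Set $v := \sigma/|\sigma|$ on $M \setminus \Sigma$. In Fermi coordinates around $\Sigma$ the model is $\sigma(x,y) = x$ with $x \in \mathbb{R}^2 \cong \mathbb{C}$ and $y$ parametrizing $\Sigma$; thus $|\nabla_0 v| = O(\dist(\cdot,\Sigma)^{-1})$, which is $L^p$-integrable for every $p \in (1,2)$, so $v \in \mathcal{U}_p(L)$. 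The classical distributional identity $d(d\theta) = 2\pi \delta_0$ in the planar factor then yields $J(v) = 2\pi [\Sigma]$ as $(n-2)$-currents.

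With $v$ in hand, Proposition \ref{j.props} gives $S \in \mathcal{I}_{n-1}(M;\mathbb{Z})$ satisfying $J(u) - J(v) = 2\pi \partial S$, i.e.\ $\tfrac{1}{2\pi}J(u) = [\Sigma] + \partial S$. Since $J(u)$ has finite mass by hypothesis and $J(v) = 2\pi[\Sigma]$ does too, $\partial S$ has finite mass; hence $S \in {\bf I}_{n-1}(M;\mathbb{Z})$ and $\partial S \in {\bf I}_{n-2}(M;\mathbb{Z})$ is integer rectifiable with finite mass. Adding the rectifiable cycle $[\Sigma]$ shows that $\tfrac{1}{2\pi}J(u)$ is integer rectifiable; combined with the closedness established above, it is an integral $(n-2)$-cycle, and since $\partial S$ is exact its homology class equals $[\Sigma]$, which is dual to $c_1(L)$.

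The main technical obstacle is the construction of the reference $v$ with the correct properties: checking $v \in \mathcal{U}_p(L)$ is a routine Fermi-coordinate estimate, but computing $J(v) = 2\pi[\Sigma]$ with the correct sign and orientation---so that the resulting class really is the Poincar\'e dual of $c_1(L)$ rather than its negative---requires some care with orientation conventions on the normal bundle of $\Sigma$ induced by the complex structure on $L$. Once that is in place, Proposition \ref{j.props} and the finite-mass hypothesis do the rest.
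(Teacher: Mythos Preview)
Your argument is correct and follows essentially the same strategy as the paper: produce a reference element $v\in\mathcal{U}_p(L)$ with $\frac{1}{2\pi}J(v)$ equal to a known integral cycle dual to $c_1(L)$, apply Proposition~\ref{j.props} to write $\frac{1}{2\pi}J(u)=\frac{1}{2\pi}J(v)+\partial S$, and then use the finite-mass hypothesis together with boundary rectifiability to conclude. The only cosmetic difference is that the paper obtains its reference element by forward-referencing Proposition~\ref{densityProp} (whose proof in turn uses exactly your transverse-section construction $v_0=w_0/|w_0|$), whereas you build it directly; your version is thus slightly more self-contained.
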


\begin{proof} 
By Proposition \ref{densityProp} below, note that there exists at least one $u_0\in \mathcal{U}_p(L)$ such that $\frac{1}{2\pi}J(u_0)$ is given by a prescribed integral (in fact, polyhedral) cycle $P\in \mathcal{Z}_{n-2}(M;\mathbb{Z})$ dual to $c_1(L)$. As a consequence, for any $u\in \mathcal{U}_p(L)$, it follows from Proposition \ref{j.props} that 
$$\frac{1}{2\pi}(J(u)-J(u_0))=\partial S$$ 
for an integer rectifiable $S\in \mathcal{I}_{n-1}(M;\mathbb{Z})$ of finite mass.

In particular, if $\mathbb{M}(J(u))<\infty$, then it follows that $\mathbb{M}(S)+\mathbb{M}(\partial S)<\infty$, and we can deduce from \cite[Theorem~30.3]{Simon} that $\partial S$ is itself an integral $(n-2)$-cycle. In particular, 
$$\frac{1}{2\pi}J(u)=\frac{1}{2\pi}J(u_0)+\partial S=P+\partial S$$
is then an integral $(n-2)$-cycle homologous to $P$, proving the claim.
\end{proof}

\subsection{Proof of Theorem \ref{GammaConvThm}, part (i)}
To complete the proof of the $\liminf$ part of the $\Gamma$-convergence theorem, it remains to establish a compactness result for sections $u_{\epsilon}$ coming from couples $(u_\epsilon,\nabla_\epsilon)$ in $X_p$ (modulo gauge transformations) under the assumption of a uniform energy bound $E_{\epsilon}(u_{\epsilon},\nabla_{\epsilon})\leq \Lambda$. As in the previous section, we will continue to work with a fixed smooth reference connection $\nabla_0$ on the line bundle $L\to M$.

\begin{lemma}\label{w1p.est} Let $(u,\nabla)$ satisfy $|u|\leq 1$ and $E_{\epsilon}(u,\nabla)\leq \Lambda$. Then there is a gauge-equivalent pair $(u',\nabla')$ for which
$$\|\nabla'-\nabla_0\|_{L^p(M)}+\|\nabla_0u'\|_{L^p(M)}\leq C(p,M,L,\Lambda)$$
for all $p\in(1,\frac{n}{n-1})$.
\end{lemma}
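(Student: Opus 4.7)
The plan is to gauge-fix $(u,\nabla)$ so that, writing $\nabla = \nabla_0 - i\alpha$, the one-form $\alpha$ lies in Coulomb gauge with a bounded harmonic component, and then to bound $\alpha$ in $L^p$ via elliptic theory applied to $d\alpha = \omega_\nabla - \omega_0$. The direct approach of bounding $\omega_\nabla$ in $L^2$ fails, since the energy controls $\omega_\nabla$ only as $O(1/\epsilon)$. The central idea is to use the identity $\omega_\nabla = J(u,\nabla) - d\langle \nabla u, iu\rangle$ (which follows from \eqref{bdry.char}): this expresses the curvature as an $L^1$-bounded piece plus the exterior derivative of an $L^2$-bounded piece, and so produces a uniform bound in a \emph{negative} Sobolev norm---at the price of restricting the exponent $p$.

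For the gauge fixing, I would apply the Hodge decomposition $\alpha = d\theta_1 + d^*\eta + h$ with $h$ harmonic. The smooth gauge transformation $\phi_1 = e^{i\theta_1}$ absorbs the exact part, and since the cohomology classes of $\phi^*(d\theta)$ for $\phi \in \Maps(M,S^1)$ sweep out the full-rank lattice $2\pi H^1(M;\Z) \subset H^1(M;\R)$, a further topologically nontrivial gauge transformation can translate the remaining harmonic component into a fixed fundamental domain, giving $\|h\|_{L^\infty(M)} \leq C(M)$ while preserving the Coulomb condition $d^*\alpha = 0$. In this new gauge $(u',\nabla') = (\phi\cdot u, \phi\cdot\nabla)$ is smooth, $|u'|\leq 1$, and the $L^2$ bound on $\nabla u'$ (being gauge-invariant) is still controlled by $\sqrt\Lambda$.

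Now choose $p \in (1, n/(n-1))$, so that the conjugate exponent $p'$ exceeds $n$ and Morrey gives $W^{1,p'}(M) \hookrightarrow L^\infty(M)$; dualization yields $L^1 \hookrightarrow W^{-1,p}$. Using the identity $d\alpha = J(u',\nabla') - d\langle \nabla u', iu'\rangle - \omega_0$, the pointwise estimate \eqref{JboundedbyE} gives $\|J(u',\nabla')\|_{W^{-1,p}} \leq C\|J\|_{L^1} \leq C(p,M)\Lambda$; the bound $\|\langle \nabla u', iu'\rangle\|_{L^2} \leq \|\nabla u'\|_{L^2} \leq \sqrt\Lambda$ places the middle term's exterior derivative in $W^{-1,2} \hookrightarrow W^{-1,p}$; and $\omega_0$ is smooth with bounds depending on $L$. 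Hence $\|d\alpha\|_{W^{-1,p}} \leq C(p,M,L,\Lambda)$. With $d^*\alpha = 0$ and a bounded harmonic part, the standard $L^p$ theory for the Hodge Laplacian gives $\alpha = d^*G_\Delta(d\alpha) + h$ with $\|\alpha\|_{L^p(M)} \leq C(p,M,L,\Lambda)$.

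Finally, since $\nabla_0 u' = \nabla u' + i\alpha u'$ in a local trivialization, the pointwise inequality $|\nabla_0 u'| \leq |\nabla u'| + |\alpha|$ (using $|u'|\leq 1$) together with $\|\nabla u'\|_{L^p} \leq C(M)\|\nabla u'\|_{L^2} \leq C\sqrt\Lambda$ (valid since $p < n/(n-1) \leq 2$) delivers the required bound on $\|\nabla_0 u'\|_{L^p}$. The main obstacle---and the reason for the restricted range $p < n/(n-1)$---is the absence of a uniform $L^2$ control on $\omega_\nabla$: one is forced to replace $\omega_\nabla$ by the combination $J(u,\nabla) - d\langle \nabla u, iu\rangle$ and work in negative Sobolev norms, where the weaker $L^1$ bound on $J$ suffices precisely in this range via the embedding $L^1 \hookrightarrow W^{-1,p}$.
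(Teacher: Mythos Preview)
Your proposal is correct and follows essentially the same approach as the paper: gauge-fix into Coulomb gauge with bounded harmonic part, use the identity $d\alpha = J(u',\nabla') - d\langle\nabla' u', iu'\rangle - \omega_0$ from \eqref{bdry.char}--\eqref{Jbeta}, bound this in $W^{-1,p}$ via $L^1\hookrightarrow W^{-1,p}$ for $p<\frac{n}{n-1}$, and recover $\alpha\in L^p$ by elliptic theory for the Hodge Laplacian. The paper organizes the computation slightly differently (working with the pre-gauge-fixed form $\eta$ and pairing $d\eta$ explicitly against test two-forms $\zeta$), but the content is identical.
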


\begin{proof} 
Writing the initial connection as
$$\nabla=\nabla_0-i\eta$$
for a one-form $\eta\in \Omega^1(M)$, consider the Hodge decomposition
$$\eta=d^*\xi+d\psi+h(\eta),$$
where $\xi\in\Omega^2(M)$, $\psi\in C^{\infty}(M)$, and $h(\eta)$ is harmonic. Since the gradients of $S^1$-valued harmonic maps form a lattice in the space $\mathcal{H}^1(M)$ of harmonic one-forms, note that we can find a harmonic map $f:M\to S^1$ such that
$$\|f^*(d\theta)-h(\eta)\|_{L^{\infty}(M)}\leq C(M).$$
Now, letting
$$\phi:=e^{i\psi}f:M\to S^1,$$
we see that
$$\alpha:=\eta-\phi^*(d\theta)=\eta-f^*(d\theta)-d\psi=d^*\xi+[h(\eta)-f^*(d\theta)].$$
Thus, making the change of gauge
$$(u',\nabla'):=(\phi^{-1}\cdot u, \nabla+i\phi^*(d\theta)),$$
we see that the new connection $\nabla'$ is given by
$$\nabla'=\nabla_0-i\alpha,$$
where $\alpha$ is co-closed, and the harmonic component $h(\alpha)=h(\eta)-f^*(d\theta)$ of the Hodge decomposition $\alpha=d^*\xi+h(\alpha)$ satisfies $\|h(\alpha)\|_{L^{\infty}(M)}\leq C.$

To obtain the desired bound for $\|\nabla'-\nabla_0\|_{L^p(M)}=\|\alpha\|_{L^p(M)}$, it remains to estimate the co-exact component $d^*\xi$. To this end, note that $\xi$ can be assumed exact and is given by
$$\xi=\Delta_H^{-1}(d\eta),$$
by definition of the Hodge decomposition. By the $L^p$ regularity theory for the Hodge Laplacian and a standard duality argument, we have an automatic bound of the form
\begin{equation}\label{coex.lp}
\|d^*\xi\|_{L^p(M)}\leq C(p,M)\|d\eta\|_{W^{-1,p}(M)}=C(p,M)\|d\eta\|_{(W^{1,p'}(M))^*}
\end{equation}
for any $p\in (1,\infty)$.

Now, by definition \eqref{beta.def} of the one-form $\beta(u,\nabla)$, we have
$$\eta=\beta(u,\nabla)-\langle \nabla u,iu\rangle,$$
while \eqref{Jbeta} gives
$$J(u,\nabla)=d(\beta(u,\nabla))+\omega_0.$$
We therefore see that
$$d\eta=J(u,\nabla)-\omega_0-d\langle \nabla u,iu\rangle,$$
and for any $\zeta\in \Omega^2(M)$, it follows that
\begin{align*}
\int_M\langle d\eta, \zeta\rangle &=\int_M\langle J(u,\nabla)-\omega_0,\zeta\rangle-\int_M\langle d\langle \nabla u,iu\rangle,\zeta\rangle\\
&=\int_M\langle J(u,\nabla)-\omega_0,\zeta\rangle-\int_M\langle\langle \nabla u,iu\rangle, d^*\zeta\rangle\\
&\leq \|J(u,\nabla)\|_{L^1(M)}\|\zeta\|_{C^0(M)}+\|F_{\nabla_0}\|_{L^1(M)}\|\zeta\|_{C^0(M)}\\
&\quad+C\|\langle \nabla u,iu\rangle\|_{L^2(M)}\|\zeta\|_{W^{1,2}(M)}.
\end{align*}
We know already that $\|J(u,\nabla)\|_{L^1(M)}\leq E_{\epsilon}(u,\nabla)\leq \Lambda$, and since $\nabla_0$ is a fixed reference connection, we automatically have $\|F_{\nabla_0}\|_{L^1(M)}\leq C(M,L)$ independent of $(u,\nabla)$. Moreover, since $|u|\leq 1$, we also see that
$$\|\langle \nabla u, iu\rangle\|_{L^2(M)}\leq \|\nabla u\|_{L^2(M)}\leq E_{\epsilon}(u,\nabla)^{1/2}.$$
Combining the preceding estimates, it follows that
$$\int_M\langle d\eta,\zeta\rangle \leq C(M,L,\Lambda)(\|\zeta\|_{C^0(M)}+\|\zeta\|_{W^{1,2}(M)}),$$
and by the Sobolev embedding $W^{1,q}(M) \hookrightarrow C^0(M)$ for $q>n$ (as well as the obvious embedding $W^{1,q}(M) \hookrightarrow W^{1,2}(M)$ for $q>n\geq 2$), we deduce in particular that
$$\|d\eta\|_{(W^{1,q}(M))^*}\leq C(q,M,L,\Lambda)$$
for any $q>n$. Together with \eqref{coex.lp}, this implies that
$$\|d^*\xi\|_{L^p(M)}\leq C(p,M,L,\Lambda)$$
for all $1<p<\frac{n}{n-1}$, and consequently
\begin{equation}
\|\nabla'-\nabla_0\|_{L^p(M)}=\|\alpha\|_{L^p(M)}\leq \|d^*\xi\|_{L^p(M)}+\|h(\alpha)\|_{L^p(M)}\leq C(p,M,L,\Lambda)
\end{equation}
for $p\in (1,\frac{n}{n-1}),$ giving the desired estimate for $\nabla'-\nabla_0$. 

In particular, since $\nabla'u'=\nabla_0u'-i\alpha u'$, for $1<p<\frac{n}{n-1}$, it also follows that
\begin{align*}
	&\|\nabla_0 u'\|_{L^p(M)}
	\leq \|\nabla' u'\|_{L^p(M)}+\|\alpha\|_{L^p(M)}
	\le \|\nabla u\|_{L^p(M)}+\|\alpha\|_{L^p(M)}
	\leq C(p,M,L,\Lambda),
\end{align*}
as claimed.
\end{proof}

With the preceding lemma in place, we can now finish the proof of the liminf part of the $\Gamma$-convergence statement.

\begin{proof}[Proof of Theorem \ref{GammaConvThm}(i)]
Given a family $(u_{\epsilon},\nabla_{\epsilon}=\nabla_0-i\alpha_{\epsilon})$ with $|u_{\epsilon}|\leq 1$ and uniformly bounded energy $E_{\epsilon}(u_{\epsilon},\nabla_{\epsilon})\leq \Lambda$, we may assume without loss of generality that the change of gauge given in the preceding lemma has already been applied to $(u_{\epsilon},\nabla_{\epsilon})$, so that
$$\|\alpha_{\epsilon}\|_{L^p(M)}+\|\nabla_0 u_{\epsilon}\|_{L^p(M)}\leq C(p,M,L,\Lambda)$$
for $1<p<\frac{n}{n-1}$. In this case, it follows that the sections $u_{\epsilon}$ are uniformly bounded in $W^{1,p}$ norm
$$\|u_{\epsilon}\|_{W^{1,p}(M)}=\|u_{\epsilon}\|_{L^p(M)}+\|\nabla_0u_{\epsilon}\|_{L^p(M)},$$
so by the Rellich--Kondrachov theorem, we can pass to a subsequence such that $u_{\epsilon}$ converges strongly in $L^p(M,L)$ to a limiting section $u\in W^{1,p}(M,L)$. Moreover, since the sections $u_{\epsilon}$ satisfy the pointwise bound $|u_{\epsilon}|\leq 1$, we see that the convergence $u_{\epsilon}\to u$ must be strong in $L^q(M,L)$ for every $q\in [1,\infty)$, and therefore the limiting section $u$ must satisfy
\begin{equation*}
\int_M (1-|u|^2)^2=\lim_{\epsilon\to 0}\int_M(1-|u_{\epsilon}|^2)^2\leq \lim_{\epsilon\to 0}\epsilon^2E_{\epsilon}(u_{\epsilon},\nabla_{\epsilon})=0;
\end{equation*}
i.e., $|u|\equiv 1$ almost everywhere, so $u\in \mathcal{U}_p(L)$.

By \eqref{beta.def} and a straightforward calculation, one can check that
\begin{align*}
\beta(u_{\epsilon},\nabla_{\epsilon})-\beta(u)
& = \langle \nabla_0u_{\epsilon},iu_{\epsilon}\rangle-\langle \nabla_0u,iu\rangle+(1-|u_{\epsilon}|^2)\alpha_\epsilon \\
& = \langle \nabla_0(u_{\epsilon}+u),i(u_{\epsilon}-u)\rangle+(1-|u_{\epsilon}|^2)\alpha_\epsilon+d\langle u_{\epsilon},iu\rangle,
\end{align*}
so that the difference $J(u_{\epsilon},\nabla_{\epsilon})-J(u)=d[\beta(u_{\epsilon},\nabla_{\epsilon})-\beta(u)]$ satisfies
\begin{align*}
&\|J(u_{\epsilon},\nabla_{\epsilon}) - J(u)\|_{(C^1(M))^*} \\
& \leq C\|\langle\nabla_0(u_{\epsilon}+u),i(u_{\epsilon}-u)\rangle+(1-|u_{\epsilon}|^2)\alpha_\epsilon \|_{L^1(M)}\\
& \leq  C(\|\nabla_0u_{\epsilon}\|_{L^p(M)}+\|\nabla_0 u\|_{L^p(M)})\|u_{\epsilon}-u\|_{L^{p'}(M)} + C\|\alpha_\epsilon\|_{L^p(M)}\|1-|u_{\epsilon}|^2\|_{L^{p'}(M)}\\
&\leq C(p,M,L,\Lambda)(\|u_{\epsilon}-u\|_{L^{p'}(M)}+\|1-|u_{\epsilon}|^2\|_{L^{p'}(M)}).
\end{align*}
Since $u_{\epsilon}\to u$ strongly in $L^{p'}$ for $p>1$, taking the limit as $\epsilon \to 0$, we have that the right-hand side goes to 0, establishing the desired convergence $J(u_{\epsilon},\nabla_{\epsilon})\to J(u)$ in $(C^1)^*$. Finally, lower semicontinuity of the mass gives the obvious bound
\begin{equation*}
\mathbb{M}(J(u)) \leq \liminf_{\epsilon \rightarrow 0} \mathbb{M}(J(u_{\epsilon},\nabla_{\epsilon}))\leq \liminf_{\epsilon \rightarrow 0} E_{\epsilon}(u_{\epsilon},\nabla_{\epsilon}) \leq \Lambda,
\end{equation*} 
and by Corollary \ref{same.class}, it follows that $\frac{1}{2\pi}J(u)$ defines an integral $(n-2)$-cycle in the correct homology class. 
\end{proof}

\begin{remark}
Alternatively, one can also give another proof of the liminf inequality via techniques similar to those used in Alberti, Baldo and Orlandi \cite{ABO2,ABO1} for functionals of Ginzburg--Landau type. Though this method would be slightly more involved than the proof given here, the automatic mass bound $\|J(u_{\epsilon},\nabla_{\epsilon})\|_{L^1(M)}\leq E_{\epsilon}(u_{\epsilon},\nabla_{\epsilon})$ again simplifies several steps, reducing the problem to establishing the integrality of the limiting cycle. 
\end{remark}

\section{Recovery sequence}   
In this section we prove existence of a recovery sequence, thus establishing the other half of the $\Gamma$-convergence and finishing the proof of Theorem \ref{GammaConvThm}. The proof is constructive in nature and exploits in a crucial way the two-dimensional solutions of the vortex equations appearing in Theorem \ref{taubes}.  We start by recalling a few basic facts from algebraic topology. 

\begin{proposition}
Any cohomology class $\alpha\in H^2(M;\Z)$ is the Euler class $c_1(L)$ of some complex line bundle $L \to M$. Also, the Euler class classifies the line bundle up to isomorphism.
\end{proposition}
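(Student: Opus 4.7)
The plan is to establish a bijection between the set of isomorphism classes of complex line bundles on $M$ and the abelian group $H^2(M;\Z)$, implemented by $L \mapsto c_1(L)$. This simultaneously yields surjectivity of $c_1$ (every $\alpha \in H^2(M;\Z)$ arises as some $c_1(L)$) and injectivity (line bundles with the same first Chern class are isomorphic), which are the two assertions of the proposition.

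First, I would invoke the classification of principal $U(1)$-bundles (equivalently, complex line bundles) on the paracompact space $M$ by homotopy classes $[M, BU(1)]$ of maps into the classifying space $BU(1)$, with the correspondence sending $[f]$ to the pullback $f^*\gamma$ of the universal line bundle $\gamma \to BU(1)$. Next, I would identify $BU(1)$ with $\mathbb{CP}^\infty$: the fibration $S^1 \hookrightarrow S^\infty \to \mathbb{CP}^\infty$, together with the contractibility of $S^\infty$, gives via the long exact sequence of homotopy groups that $\pi_2(\mathbb{CP}^\infty) \cong \Z$ and all other $\pi_k(\mathbb{CP}^\infty)$ vanish, so $\mathbb{CP}^\infty$ is an Eilenberg--MacLane space $K(\Z,2)$. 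The representing property of Eilenberg--MacLane spaces then gives $[M, K(\Z,2)] \cong H^2(M;\Z)$ via $[f] \mapsto f^*\iota$, where $\iota \in H^2(K(\Z,2);\Z)$ is the fundamental class.

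To match the composite bijection with $c_1$, it remains to verify that $c_1(\gamma) = \pm\iota$, which follows from the standard normalization of Chern classes (or from computing $c_1(\gamma|_{\mathbb{CP}^1})$, which generates $H^2(\mathbb{CP}^1;\Z) \cong \Z$). Naturality then yields $c_1(f^*\gamma) = f^*c_1(\gamma) = \pm f^*\iota$, so the bijection from the previous step agrees up to sign with $L \mapsto c_1(L)$.

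Alternatively, and perhaps more in keeping with the analytic flavor of this paper, one can proceed via \v{C}ech cohomology: line bundles on $M$ are classified by $\check{H}^1(M;\underline{U(1)})$ through their transition functions, and the exponential short exact sequence of sheaves $0 \to \underline{\Z} \to \underline{\R} \to \underline{U(1)} \to 0$, combined with the vanishing of $H^k(M;\underline{\R})$ for $k\ge 1$ (as $\underline{\R}$ is a fine sheaf admitting partitions of unity), yields $\check{H}^1(M;\underline{U(1)}) \cong H^2(M;\Z)$ via the connecting homomorphism, which is precisely $c_1$. Since the statement is entirely classical, there is no substantive obstacle to overcome; the main task is merely to assemble the correct framework, and the proof would likely be presented as a remark rather than worked out in detail.
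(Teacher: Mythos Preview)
Your proposal is correct and follows essentially the same approach as the paper: both argue via the classifying space $BU(1)\simeq\mathbb{CP}^\infty$, identify it as a $K(\Z,2)$ to obtain the bijection $[M,\mathbb{CP}^\infty]\cong H^2(M;\Z)$, and match this with $c_1$; both also mention the exponential sheaf sequence as an alternative route. The paper presents this as a brief remark with references rather than a detailed proof, which is consistent with your own assessment.
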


Indeed, it is well known that any complex line bundle arises as the pullback of the canonical line bundle on $\mathbb{CP}^\infty$ by means of a continuous map $f:M\to\mathbb{CP}^\infty$, with a correspondence between the homotopy class $[f]$ and the isomorphism class of the line bundle. For a specific choice of the generator $\lambda$ of $H^2(\mathbb{CP}^\infty; \mathbb{Z})$, we then have
$c_1(L)=f^*\lambda$. On the other hand, $\mathbb{CP}^\infty$ is an Eilenberg--MacLane space $K(\Z,2)$: hence, any $\alpha\in H^2(M; \Z)$ equals $f^*\lambda$ for a unique homotopy class $[f]$; see, e.g., \cite[Theorem~4.57]{Hatcher}. For a more elementary proof using the exponential sheaf sequence, see for instance \cite[pp.\ 139--140]{GriffithsHarris}.

We know from Section \ref{SecLimInf} that the homology class of a limit cycle $\Gamma$ is dual to the Euler class of the bundle. Conversely, given a hermitian line bundle $L\to M$ and a cycle $\Gamma$ whose homology class $[\Gamma]$ is dual to $c_1(L)$,
we now show how to realize $\Gamma$ as the limit of $\frac{1}{2\pi}J(u_\epsilon,\nabla_\epsilon)$,
for appropriate pairs of sections and connections on $L$, as in part (ii) of Theorem \ref{GammaConvThm}.

The next proposition provides a useful variant of Federer's polyhedral approximation theorem (cf.\ \cite[Lemma~4.2.19]{Federer}) for our setting, providing a polyhedral approximation of a given cycle $\Gamma$, which can be realized as the distributional Jacobian $J(v)$ of an appropriate singular unit section. 
Locally, this is a simpler version of the main result from \cite{ABO2}, with appropriate modifications for the manifold setting.

\begin{proposition} \label{densityProp}
Given an integral $(n-2)$-cycle $\Gamma\in \mathcal{Z}_{n-2}(M;\mathbb{Z})$, there exists a triangulation of $M$ and an integer valued function $k$ on the collection $\{\Delta\}$ of $(n-2)$-simplices of the triangulation, each with a fixed orientation, such that the integral current
\begin{align*}
		P:={\textstyle\sum_\Delta} k(\Delta)\Delta
\end{align*}
is a cycle arbitrarily close to $\Gamma$ in the flat topology,
	with $\mathbb{M}(P)$ arbitrarily close to $\mathbb{M}(\Gamma)$.
	Also, there exists a section $v\in\mathcal{U}_p(L)\cap C^\infty(M\setminus\mathcal{S}_{n-2})$, for $p\in(1,2)$, such that
	\begin{align*}
		&J(v)=2\pi P
	\end{align*}
	and, with respect to a reference connection $\nabla_0$,
	\begin{align}\label{est.v}
		&|\nabla_0 v|\le C\operatorname{dist}(\cdot,\mathcal{S}_{n-2})^{-1}, 
	\end{align}
	where $\mathcal{S}_{n-2}=\bigcup \Delta$ is the $(n-2)$-skeleton of the triangulation, and $C$ depends on $v$. 
\end{proposition}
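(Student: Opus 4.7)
The plan is to prove the statement in two parts: (A) approximate $\Gamma$ in the flat topology by a polyhedral cycle $P = \sum_\Delta k(\Delta)\Delta$ with nearly the same mass; (B) construct a singular unit section $v$ of $L$ realizing $\tfrac{1}{2\pi}J(v) = P$ with the prescribed gradient bound.

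For part (A), I would fix a smooth triangulation $\mathcal{T}$ of $M$ of mesh size $\delta \ll 1$ and apply Federer's deformation theorem (\cite[4.2.9]{Federer}) chart by chart to push $\Gamma$ onto the $(n-2)$-skeleton $\mathcal{S}_{n-2}$. The output is an integer polyhedral chain $P$ supported on $\mathcal{S}_{n-2}$ with $\partial P = 0$ (inherited from $\partial\Gamma = 0$), $\mathcal{F}(\Gamma - P) \to 0$, and $\mathbb{M}(P) \to \mathbb{M}(\Gamma)$ as $\delta \to 0$. For $\delta$ small enough, $[P] = [\Gamma]$ in homology, hence $P$ is Poincar\'e dual to $c_1(L)$.

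For part (B), I would proceed locally. Near the relative interior of each $(n-2)$-simplex $\Delta$ with integer weight $k = k(\Delta)$, choose adapted tubular coordinates $(z, x_3, \ldots, x_n)$ with $z = x_1 + i x_2$ and $\Delta$ corresponding to $\{z = 0\}$, and set in a local trivialization of $L$
\begin{equation*}
v := (z/|z|)^{k}.
\end{equation*}
This section is unit, satisfies $|\nabla_0 v| \leq C|z|^{-1}$, and has local distributional Jacobian $2\pi k [\{z = 0\}]$. To extend $v$ consistently to a global section on $M \setminus \mathcal{S}_{n-2}$, I would glue these local models across overlapping charts using smooth $S^1$-valued transition functions that introduce no Jacobian away from $\mathcal{S}_{n-2}$. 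Near each $(n-3)$-simplex $\tau$ lying in the closure of $(n-2)$-simplices $\Delta_1, \ldots, \Delta_m$, the cycle condition $\partial\bigl(\sum_i k(\Delta_i) \Delta_i\bigr) = 0$ local to $\tau$ is precisely the cocycle compatibility needed for the local $S^1$-valued expressions to glue into a smooth $S^1$-valued map on the $S^2$-link of $\tau$, extending smoothly over the complement of the skeleton.

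With $v$ defined globally, the required properties follow. One has $v \in \mathcal{U}_p(L)$ for $p \in (1,2)$ because $|\nabla_0 v|^p \leq C \operatorname{dist}(\cdot, \mathcal{S}_{n-2})^{-p}$ is locally integrable; $J(v) = 2\pi P$ because in each local trivialization the Jacobian concentrates on $\{z = 0\}$ with the right multiplicity, while the interpolation regions contribute nothing distributionally; finally, the line bundle constructed from the transition data is isomorphic to the original $L$ because the \v{C}ech cocycle represents the Chern class $[P] = c_1(L) \in H^2(M;\mathbb{Z})$. The main obstacle is the gluing near the $(n-3)$-skeleton, where the topological constraint $\partial P = 0$ plays the essential role; this follows the strategy of \cite{ABO2} adapted to the manifold setting, requiring a careful inductive construction along the lower-dimensional skeleta and identification with a \v{C}ech representative of $c_1(L)$.
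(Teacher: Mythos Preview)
Your outline for part (A) contains a genuine gap. Applying the deformation theorem \cite[4.2.9]{Federer} to $\Gamma$ directly yields a polyhedral chain $P$ with $\mathbb{M}(P)\le C(n)\,\mathbb{M}(\Gamma)$ for a fixed dimensional constant $C(n)>1$; it does \emph{not} give $\mathbb{M}(P)$ close to $\mathbb{M}(\Gamma)$, no matter how small the mesh $\delta$. The paper avoids this by first invoking Federer's approximation lemma \cite[4.2.19]{Federer} to write $\Gamma$ as a sum of disjoint $C^1$ embedded pieces $\sum_j a_j F_j(B^{n-2})$ plus a remainder $T$ of mass $<\delta$, then choosing a triangulation in which each $F_j(\bar B^{n-2})$ is already a subcomplex, and finally deforming only the small $T$ onto the skeleton. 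The dimensional constant then hits only $\mathbb{M}(T)<\delta$, not $\mathbb{M}(\Gamma)$, and one obtains $|\mathbb{M}(P)-\mathbb{M}(\Gamma)|\le C\delta$. A second subtlety you pass over is that on a manifold one cannot average over translations of the grid; the paper instead averages over the center of the radial retraction inside each simplex.

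For part (B) your strategy is viable but genuinely different from the paper's, and the sketch hides the step that actually costs effort. Rather than gluing local models $(z/|z|)^{k(\Delta)}$ and then arguing a posteriori that the resulting bundle has the right Chern class, the paper starts from a smooth section $w_0\in\Gamma(L)$ transverse to zero, so that $v_0:=w_0/|w_0|$ is automatically a unit section of the \emph{given} bundle $L$ with $J(v_0)=2\pi S_0$ for the smooth zero locus $S_0$. After arranging (via Munkres) that a perturbed $S_1$ lies in $\mathcal{S}_{n-2}$, the discrepancy $P-S_1$ is a boundary $\partial(\sum_j k_j R_j)$ of $(n-1)$-simplices, and one only needs a \emph{scalar} map $v':M\setminus\bigcup_j\operatorname{spt}(\partial R_j)\to S^1$ with $J(v')=2\pi\partial(\sum_j k_j R_j)$; this is an elementary local construction with no bundle-compatibility issues. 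The product $v=v'v_1$ is then a section of $L$ with $J(v)=2\pi P$. Your approach forces you to build the bundle from scratch and then invoke an isomorphism $L'\cong L$; this works, but the inductive gluing over the lower skeleta (which you acknowledge as ``the main obstacle'') is exactly what the paper sidesteps.
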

\begin{proof}
	In order to approximate $\Gamma$, we modify Federer's classic approximation result \cite[Lemma~4.2.19]{Federer} as follows. 
	Given $\delta>0$, using the same proof we can find a finite collection of disjoint $C^1$ embeddings
	$F_j:\bar B^{n-2}\to M$ and multiplicities $a_j\in\Z$ such that 
	\begin{align*}
		&\mathbb{M}(T)<\delta,\quad\text{where } T:=\Gamma-{\textstyle\sum_j} a_jF_j(B^{n-2}).
	\end{align*}
	Moreover, we can find a triangulation of $M$ such that each piece $F_j(\bar B^{n-2})$ is a subcomplex, for instance triangulating first a tubular neighborhood of each and then extending to a triangulation of the complement, using \cite[Theorem~10.6]{Munkres}.
	We can also refine the triangulation in such a way that each simplex has diameter less than a given $\rho>0$ and
	admits a diffeomorphism $f$ to (a scaled copy of) the standard simplex with $\operatorname{Lip}(f)+\operatorname{Lip}(f^{-1})\le C$, for a universal constant $C$.
	
	We now argue as in the deformation theorem (see \cite[Theorem~4.2.9]{Federer} or \cite[Theorem~29.1]{Simon}),
	using our triangulation in place of the Euclidean grid.
	Since we are in a manifold, we cannot easily average over translations; but, recalling that the simplices are identified with the standard one, we can average instead over the center of the retraction.


	Namely, given the standard $k$-dimensional simplex $\Delta^k$, denote $\frac{1}{2}\Delta^k$ the rescaled simplex with the same center.
	Since $\frac{1}{2}\Delta^k$ has positive distance from the boundary $\de\Delta^k$, for any point $p\in\frac{1}{2}\Delta^k$ the radial retraction $r_p:\Delta^k\setminus\{p\}\to\de\Delta^k$ is locally Lipschitz outside $\{p\}$ and satisfies $|dr_p(x)|\le C(k)|x-p|^{-1}$.
	Then, for $0\le m<k$, given a normal rectifiable 
	$m$-current $W$ on $\Delta^k$, with $C=C(k)$ we have
	\begin{align*}
		&\int_{\frac{1}{2}\Delta^k}\int_{\Delta^k} |dr_p(x)|^m\,d|W|(x)\,d\mathcal{L}^k(p)
		\le C\int_{\Delta^k}\int_{\frac{1}{2}\Delta^k}|x-p|^{-m}\,d\mathcal{L}^k(p)\,d|W|(x)
		\le C\mathbb{M}(W).
	\end{align*}
	Hence, there exists $p$ such that the inner integral on the left-hand side is bounded by $C(k)\mathbb{M}(W)$ (and $\|W\|(\{p\})=0$ if $m=0$). A standard cut-off argument shows that the pushforward $(r_p)_*W$ is a well-defined current whose mass is bounded by the same quantity.
	If $W$ has no boundary in the interior of $\Delta^k$, as in the proof of the deformation theorem it is easy to check that the difference $W-(r_p)_*W=\de V$ for some $(m+1)$-current $V$ with $\mathbb{M}(V)\le C(k)\mathbb{M}(W)$.
	Scaling by a factor $\rho$ gives the same result for a current $W$ supported on the scaled simplex, with the bounds $\mathbb{M}((r_p)_*W)\le C(k)\mathbb{M}(W)$ and $\mathbb{M}(V)\le C(k)\rho\mathbb{M}(W)$.
	
	The same argument applies to an $m$-current supported on the $k$-skeleton of our triangulation, assuming that $0\le m<k$ and that the boundary of the current is supported on the $(k-1)$-skeleton, since the retractions on each $k$-simplex paste together.
	In particular, this holds for the $(n-2)$-current $T$, with $k=n$, since
	\begin{align*}
		&\de T=-{\textstyle\sum_j} a_j\de(F_j(B^{n-2}))
	\end{align*}
	is supported on the $(n-3)$-skeleton. We can thus construct a retraction $r$ to the $(n-1)$-skeleton such that $T':=r_*T$
	satisfies $T=T'+\de R'$, with
	\begin{align*}
		&\mathbb{M}(T')\le C\mathbb{M}(T)\quad \text{and} \quad \mathbb{M}(R')\le C\rho\mathbb{M}(T)
	\end{align*}
	where $T'$ is an integral current supported on the $(n-1)$-skeleton.
	We can repeat the same on the $(n-1)$-skeleton and retract $T'$ to a current $T''$ supported on the $(n-2)$-skeleton, such that $T'=T''+\partial R''$, with
\begin{align*}
\mathbb{M}(T'')\le C\mathbb{M}(T')\quad\text{and}\quad\mathbb{M}(R'')\leq C\rho\mathbb{M}(T').
\end{align*}	
Since $\de T''=\de T$ vanishes on the interior of each $(n-2)$-simplex, by the constancy theorem $T''$ is an algebraic sum of the $(n-2)$-simplices. Thus, defining
	\begin{align*}
		&P:=T''+{\textstyle\sum_j} a_jF_j(B^{n-2}),
	\end{align*}
	we have $\Gamma-P=\de(R'+R'')$ and
	\begin{align*}
		&|\mathbb{M}(P)-\mathbb{M}(\Gamma)|\le\mathbb{M}(P-\Gamma)\le\mathbb{M}(T)+\mathbb{M}(T'')\le C\delta,
\end{align*}
together with 
\begin{align*}
\quad \mathbb{M}(R')+\mathbb{M}(R'') \le C\delta,
\end{align*}
for $\rho$ small enough.
	
Let us now fix a smooth section $w_0:M\to L$ which is transverse to the zero section, the existence of which is guaranteed, for instance, by \cite[Theorem~IV.2.1]{Kosinski}. The implicit function theorem implies then that $S_0:=w_0^{-1}\{0\}$ is a smooth $(n-2)$-submanifold. Moreover, it comes equipped with the canonical orientation such that a positive basis $\{v_3,\dots,v_n\}$ of $T_p S_0$, extended with $\{v_1,v_2\}$ such that $\{dw_0[v_1],dw_0[v_2]\}$ gives a positive basis of $L_p$, gives a positive basis $\{v_1,\dots,v_n\}$ of $T_pM$.
With this orientation, letting $v_0:=\frac{w_0}{|w_0|}$, we have $J(v_0)=2\pi S_0$ and $[M]\frown c_1(L)=[S_0]$.
		
We can then find another triangulation of $M$ such that $S_0$ is a union of $(n-2)$-simplices. Using \cite[Theorem~10.4]{Munkres}, viewing the two triangulations as embeddings of simplicial complexes, we can find a subdivision of each complex and a perturbation $(F_t)_{t\in[0,1]}$ of the second embedding in such a way that $F_1$ agrees with the first embedding, for a suitable identification of the two domain complexes.
The perturbation can be chosen to be smooth on each domain simplex.
	
Note that the perturbed $S_1=F_1(S_0)\subseteq \mathcal{S}_{n-2}$ is still the zero set of a piecewise smooth section, which we denote $w_1$, obtained for instance from $w_0$ by parallel transport along the curves $t\mapsto F_t\circ F_0^{-1}(x)$ for $x\in M$ with respect to some fixed connection $\nabla_0$ on $L$. 
Clearly, for this transported section $w_1$, the singular unit section $v_1:=\frac{w_1}{|w_1|}$ satisfies $J(v_1)=2\pi S_1$ outside $\mathcal{S}_{n-3}$, the $(n-3)$-skeleton of the triangulation.

The perturbations $J(v_1)$ and $S_1$ differ from $J(v_0)$ and $S_0$ by boundaries of rectifiable currents, by Proposition \ref{j.props}.
Hence, from $J(v_0)=2\pi S_0$ we deduce that $J(v_1)-2\pi S_1$ is the boundary of a rectifiable $(n-1)$-current vanishing outside $\mathcal{S}_{n-3}$. The retraction of the latter to the $(n-1)$-skeleton must then be a linear combination of $(n-1)$-simplices by the constancy theorem, with the same boundary. Hence, $J(v_1)-2\pi S_1=0$. Since $[P]=[\Gamma]=[S_1]$, the difference $[P-S_1]$ is trivial in $H_{n-2}(M;\Z)$. Hence,
\begin{align*}
		 P-S_1=\de\Big({\textstyle\sum_j} k_j R_j\Big)
\end{align*}
for a collection $\{R_j\}$ of $(n-1)$-simplices in the triangulation. We have the following elementary fact.

\begin{lemma}
There exists a map $v'\in C^{\infty}(M\setminus\bigcup_j \operatorname{spt}(\de R_j),S^1)$ with $J(v')=2\pi\de({\textstyle\sum_j} k_jR_j)$ and $|dv'|\le C \dist(\cdot,\bigcup_j \spt(\de R_j))^{-1}$.
\end{lemma}
The proof is a straightforward application of the techniques in \cite[Section~4]{ABO1}. Indeed, 
for a geodesic ball $\bar U_j\subset M$ covering $R_j$, the arguments of \cite{ABO1} can be applied to obtain a map $v_j':\bar U_j\to S^1$, locally Lipschitz outside $\spt(\de R_j)$, satisfying $J(v_j')=2\pi\de R_j$ and $|dv_j'|\le C\dist(\cdot,\spt(\de R_j))^{-1}$; up to regularization, we can assume $v_j'$ smooth outside $\spt(\de R_j)$.
The map $v_j'$ restricts to a contractible map from $\de U_j$ to $S^1$, even when $n=2$ (in which case the latter has degree zero). Hence, it can be extended smoothly in the complement of $\bar U_j$.
The product $v':=\prod_j(v_j')^{k_j}$ is the desired map.
	
We can now conclude the proof of the proposition. Since $v_1=\frac{w_1}{|w_1|}$ has Jacobian $J(v_1)=2 \pi S_1$, the product $v:=v'v_1$ then has
	\begin{align*}
		&J(v)
		=J(v')+J(v_1)
		=2\pi(P-S_1)+2\pi S_1
		=2\pi P.
	\end{align*}
	Thus, the cycle $P$ and the map $v$ have all the desired properties and the proof of Proposition \ref{densityProp} is complete. 
\end{proof}

We now show how to obtain a recovery sequence $(u_{\epsilon},\nabla_{\epsilon})$ for any polyhedral approximation $P$ of $\Gamma$.
Once this is done, the result follows for any integral $(n-2)$-cycle $\Gamma$ by the preceding proposition and a diagonal argument. 

Fix a triangulation of $M$ as in the conclusion of Proposition \ref{densityProp}. For an $(n-2)$-simplex $\Delta$, fix a diffeomorphism $\bar\Delta\to\Delta$ from the standard simplex $\bar\Delta$. For $\delta>0$ small, we denote by $\Delta_\delta$ the image of the set of points in $\bar\Delta$ with distance at least $\delta$ from the boundary. Given $p\in\Delta\setminus\de\Delta$, we denote by $B_r^\perp(p)$ the ball of radius $r$ in the normal bundle to $\Delta$ at $p$;
for a set $S$ of such points, we then set $B_r^\perp(S):=\bigcup_{p\in S}B_r^\perp(p)$.
Note that there exists $c'>0$ independent of $\delta$ such that
the exponential map is a diffeomorphism from $B_{c'\delta}^\perp(\Delta_\delta)$ to its image and such that, setting
\begin{align*}
	&V_\delta(\Delta):=\exp(B_{c'\delta}^\perp(\Delta_\delta)),
\end{align*}
we have $V_\delta(\Delta)\cap V_\delta(\Delta')=\emptyset$.
We can also require that the closest point to $\exp_p(v)$ in the $(n-2)$-skeleton $\bigcup\Delta$ is $p$, whenever $v\in B_{c'\delta}^\perp(p)$ and $p\in\Delta_\delta$. With these preparations in place, we come now to the main result of this section.

%

\begin{proposition}\label{glueing}
For $\epsilon > 0$ small enough, there exists a family of smooth couples $(u_\epsilon,\nabla_\epsilon)$ such that
\begin{align*}
	J(u_\epsilon,\nabla_\epsilon) \rightharpoonup 2\pi P, \quad \text{as }\epsilon \rightarrow 0, 
\end{align*}
as currents, and
\begin{align*}
	\lim_{\epsilon \rightarrow 0} E_\epsilon(u_\epsilon,\nabla_\epsilon) = 2\pi\mathbb{M}(P).
\end{align*}
\end{proposition}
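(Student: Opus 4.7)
The plan is to paste together two model solutions over a thickening of the skeleton: the planar self-dual Taubes vortices (from Theorem \ref{taubes}) inside a tube around each $(n-2)$-simplex $\Delta$ with $k(\Delta)\neq 0$, and the singular unit section $v$ from Proposition \ref{densityProp} on the bulk, interpolated in a thin annular shell. Morally, the energy concentrates on the $(n-2)$-skeleton at density $2\pi|k(\Delta)|$ per unit of transverse area, matching $2\pi\mathbb{M}(P)$ as $\epsilon,\delta\to 0$; once the statement is established for polyhedral $P$, a diagonal argument combined with Proposition \ref{densityProp} yields the recovery sequence for arbitrary integral $\Gamma$.

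For each nonzero integer $k$, using Theorem \ref{taubes} I fix a planar self-dual vortex pair $(U_k,A_k)$ on the trivial bundle over $\R^2$ with a single zero of multiplicity $k$ at the origin, total energy $2\pi|k|$, and the asymptotic behavior $|U_k|\to 1$, $|F_{A_k}|\to 0$ exponentially fast as $|x|\to\infty$. The rescaling $(U_k(\cdot/\epsilon),\epsilon^{-1}A_k(\cdot/\epsilon))$ preserves total energy and concentrates it at scale $\epsilon$. For each simplex $\Delta$ with $k(\Delta)\neq 0$, using the diffeomorphism $V_\delta(\Delta)\cong B^\perp_{c'\delta}(\Delta_\delta)$ and a smooth oriented orthonormal frame for the normal $2$-plane bundle over $\Delta_\delta$ (which exists since $\Delta_\delta$ is contractible), I pull back the rescaled vortex profile fiberwise; this defines a smooth pair on $V_\delta(\Delta)$ with energy asymptotic to $2\pi|k(\Delta)|\mathbb{M}(\Delta_\delta)$ and with $J$ concentrated near $\Delta$.

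Next I set up the exterior pair as $(v,\nabla_\epsilon^{\mathrm{out}})$, where $\nabla_\epsilon^{\mathrm{out}}$ is a smooth hermitian connection satisfying $\nabla_\epsilon^{\mathrm{out}}v=0$ in a neighborhood of $M\setminus\bigcup_\Delta V_\delta(\Delta)$; such a connection exists because $L$ is trivialized there by $v$. Then $|v|\equiv 1$ kills the potential term, $\nabla_\epsilon^{\mathrm{out}}v=0$ kills the kinetic term, and the identity $J(v,\nabla_\epsilon^{\mathrm{out}})=d\langle\nabla_\epsilon^{\mathrm{out}}v,iv\rangle+\omega_{\nabla_\epsilon^{\mathrm{out}}}$ forces $\omega_{\nabla_\epsilon^{\mathrm{out}}}=0$ on the exterior, so the exterior contributes zero energy. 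To interpolate on the annular shell $V_\delta(\Delta)\setminus V_{\delta/2}(\Delta)$, I observe that, provided $\delta\gg\epsilon$, both the rescaled vortex profile and $(v,\nabla_\epsilon^{\mathrm{out}})$ are pairs of modulus approximately $1$ on the shell with matching winding $k(\Delta)$ around $\Delta$; they therefore differ by a smooth gauge transformation $\Phi_\Delta\colon V_\delta(\Delta)\setminus V_{\delta/2}(\Delta)\to S^1$ (single-valued since $\Delta_\delta$ is contractible and the windings cancel), which I cut off radially to interpolate smoothly between the two pairs.

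Summing contributions, the total energy is $\sum_\Delta 2\pi|k(\Delta)|\mathbb{M}(\Delta_\delta)+o(1)$, with the $o(1)$ accounting for the annular transition (via the exponential vortex decay), the uncovered $O(\delta)$-neighborhood of the $(n-3)$-skeleton (filled in, e.g., by a cutoff extension of the vortex profile), and the modulus-smoothing errors; sending first $\epsilon\to 0$ and then $\delta\to 0$ gives $E_\epsilon(u_\epsilon,\nabla_\epsilon)\to 2\pi\mathbb{M}(P)$. The weak convergence $J(u_\epsilon,\nabla_\epsilon)\rightharpoonup 2\pi P$ follows because in each normal disk the rescaled vortex integrates to $2\pi k(\Delta)$, whereas the exterior $J$ vanishes pointwise. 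The main obstacle is the gluing step: one must verify that the radial interpolation of $\Phi_\Delta$, combined with the smoothing of the modulus near $|U_k|=1$, produces genuinely $o(1)$ errors in $|\nabla u|^2$, $\epsilon^2|\omega|^2$, and $\epsilon^{-2}W(u)$ on the shell of volume $O(\delta^2\mathbb{M}(\Delta))$. This hinges on quantitative exponential convergence of the planar Taubes vortex to a flat unit section at length scale $\epsilon\log(1/\epsilon)$, which is available in Taubes' original analysis.
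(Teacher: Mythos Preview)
Your overall strategy matches the paper's: place rescaled Taubes vortices in tubes $V_\delta(\Delta)$ around each $(n-2)$-simplex, use the singular unit section $v$ with its parallel connection outside, and interpolate across an annular shell using the exponential decay of the vortex. The paper carries out the shell interpolation via explicit formulas (writing $u_\epsilon'/|u_\epsilon'|=e^{if}v$ and cutting off both $f$ and $1-|u_\epsilon'|$ with a radial bump $\chi$), which is exactly your ``gauge transformation cut off radially.''

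The genuine gap is at the $(n-3)$-skeleton $K$, and specifically in your order of limits. You write that the corner region is ``filled in, e.g., by a cutoff extension of the vortex profile'' and then conclude by ``sending first $\epsilon\to 0$ and then $\delta\to 0$.'' This does not work. The tubes $V_\delta(\Delta)$ only cover $\Delta_\delta$, so in $B_{C\delta}(K)$ the $(n-2)$-simplices themselves are present but uncovered; the singular section $v$ is not smooth there and cannot be used directly. The simplest filling---cutting the section to $0$ on $B_{C\delta}(K)$---incurs a potential cost $\epsilon^{-2}W(0)\cdot\operatorname{vol}(B_{C\delta}(K))\sim\epsilon^{-2}\delta^3$, which blows up as $\epsilon\to 0$ with $\delta$ fixed. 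Any alternative filling that keeps $|u|\approx 1$ has to wind around the simplices with the correct degrees \emph{and} match the vortex phases coming from the several tubes meeting at $K$; you do not address this, and ``cutoff extension of the vortex profile'' does not resolve it.

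The paper's resolution is to \emph{couple} the scales: take $\delta=\delta(\epsilon)=\epsilon^{3/4}$, so that simultaneously $\lambda/\epsilon\sim\delta/\epsilon\to\infty$ (so the exponential factor $e^{-c\lambda/\epsilon}$ kills the shell error) and $\epsilon^{-2}\delta^3=\epsilon^{1/4}\to 0$ (so cutting to zero near $K$ is free). Concretely, on $B_{4C'\delta}(K)$ one sets $(u_\epsilon,\nabla_\epsilon)=(\rho_\delta\tilde u_\epsilon,(1-\rho_\delta)\nabla_0+\rho_\delta\tilde\nabla_\epsilon)$ and checks, using the pointwise bounds \eqref{est.v} and \eqref{est.non.gauge}, that the contributions of $|d\rho_\delta|^2$, $\epsilon^{-2}W$, $|\nabla_0\tilde u_\epsilon|^2$, and $\epsilon^2|\tilde\alpha_\epsilon|^2$ on this region are all $o(1)$; the last of these requires a short separate estimate giving only a $\log(\epsilon^{-1})$ growth per normal slice, compensated by the $O(\delta)$ length of $\Delta\cap B_{4C'\delta}(K)$. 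This balancing $\epsilon\ll\delta(\epsilon)\ll\epsilon^{2/3}$ is the missing idea in your sketch; once it is in place, the convergence $J(u_\epsilon,\nabla_\epsilon)\rightharpoonup 2\pi P$ is obtained (as in the paper) from $u_\epsilon\to v$ a.e.\ together with a uniform $W^{1,p}$ bound on $u_\epsilon$ for $p<2$, rather than by the slicing argument you indicate.
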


Throughout the proof, we will use the following key fact, for a proof of which we refer the reader to \cite[Theorem~III.2.3]{JaffeTaubes}.

\begin{theorem}\label{taubes}
For the trivial line bundle $L\to\C$, given any integer $k_0\in\Z$ there exists a smooth couple $(u_\epsilon,\nabla_\epsilon)$ which is (locally) critical for the energy $E_\epsilon$, has
$u_\epsilon^{-1}\{0\}=\{0\}$ and
\begin{align*}
		E_\epsilon(u_\epsilon,\nabla_\epsilon)=2\pi|k_0|.
\end{align*}
Moreover, $|u_\epsilon|\le 1$ and, writing $\nabla_\epsilon=d-i\alpha_\epsilon$, we have the decay for gauge invariant quantities
\begin{align}\label{decay}
		|\nabla_\epsilon u_\epsilon|+\frac{1-|u_\epsilon|^2}{\epsilon}+\epsilon|d\alpha_\epsilon| \le \frac{C(k_0)}{\epsilon}e^{-c(k_0)|z|/\epsilon}.
\end{align}
Finally, we can require that $u_\epsilon=|u_\epsilon|e^{ik_0\theta}$ for $|z|\ge\epsilon$, which gives
\begin{align}\label{est.non.gauge}
		|u_\epsilon^*(d\theta)|\le C(k_0)|z|^{-1},\quad|du_\epsilon|+|\alpha_\epsilon|\le C(k_0)(\epsilon^{-1}\wedge|z|^{-1}).
\end{align}
\end{theorem}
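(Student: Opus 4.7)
The plan is to paste a planar Taubes $k(\Delta)$-vortex from Theorem \ref{taubes} onto the normal bundle of each $(n-2)$-simplex $\Delta$ inside its tube $V_\delta(\Delta)$, and then glue onto a smooth background pair $(u_*,\nabla_*)$ on the complement whose energy density vanishes identically. I would choose $\delta=\delta(\epsilon)\to 0$ with $\delta/\epsilon\to\infty$ (e.g.\ $\delta=\sqrt{\epsilon}$), so that the vortex cores (of scale $\epsilon$) lie well inside the tubes, the exponential decay in \eqref{decay} dominates all the gluing errors, and the tubes $V_\delta(\Delta)$ remain mutually disjoint by construction.

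Concretely, in Fermi coordinates $(p,z)\in\Delta_\delta\times B^2_{c'\delta}(0)\to V_\delta(\Delta)$ the bundle $L|_{V_\delta(\Delta)}$ is trivializable, since $V_\delta(\Delta)$ is contractible. Fix such a trivialization and, exploiting the winding $k(\Delta)$ of the singular unit section $v$ from Proposition \ref{densityProp} around $\Delta$, pick a local gauge $\phi_\Delta$ such that $\phi_\Delta\cdot v=e^{ik\theta}$. In this gauge I install the product Taubes vortex $u_\epsilon^k(z)=f_\epsilon^k(|z|)e^{ik\theta}$ with connection form $\alpha_\epsilon^k$ on the inner portion of the tube, and on the outside $M\setminus\bigcup_\Delta V_{c'\delta/2}(\Delta)$ I take
\begin{equation*}
	u_*:=v,\qquad\nabla_*:=\nabla_0-i\langle\nabla_0 v,iv\rangle,
\end{equation*}
which satisfies $\nabla_* v=0$ (hence $F_{\nabla_*}=0$) on $M\setminus\mathcal{S}_{n-2}$, so its energy density vanishes there. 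On the annulus $c'\delta/4\le|z|\le c'\delta/2$ I interpolate both the modulus and the connection form via a cutoff $\chi(|z|)$. The key structural observation is that, by \eqref{decay} and the asymptotic normalization $\alpha_\epsilon^k\to k\,d\theta$ as $|z|/\epsilon\to\infty$, in the chosen gauge both $\alpha_\epsilon^k$ and $\alpha_*$ coincide with $k\,d\theta$ modulo a correction of size $O(e^{-c|z|/\epsilon})$, and similarly $1-f_\epsilon^k(|z|)=O(e^{-c|z|/\epsilon})$. Patching back via the $\phi_\Delta^{-1}$ yields a smooth pair $(u_\epsilon,\nabla_\epsilon)$ on $L\to M$.

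The energy bookkeeping then splits into three regions. In each inner tube, Fubini together with Theorem \ref{taubes} and the closeness of the Fermi volume form to the product metric yields
\begin{equation*}
	\int_{V_{c'\delta/4}(\Delta)}e_\epsilon\,\dvol_g=2\pi|k(\Delta)|\,\mathcal{H}^{n-2}(\Delta_\delta)(1+O(\delta))+O(e^{-c\delta/\epsilon}),
\end{equation*}
summing over simplices to $2\pi\mathbb{M}(P)+o(1)$. On the transition annulus, the estimates of the previous paragraph give $|\nabla_\epsilon u_\epsilon|$, $(1-|u_\epsilon|^2)/\epsilon$ and $|d\alpha_\epsilon|$ all of order $O(e^{-c\delta/\epsilon}/\epsilon)$, so, integrating over a region of volume $O(\delta^2\mathcal{H}^{n-2}(\Delta))$, the total contribution is $o(1)$ because $\delta/\epsilon\to\infty$. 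Outside the tubes the energy density vanishes identically, and three contributions add to $2\pi\mathbb{M}(P)$ in the limit. For the Jacobian convergence, the pointwise bound $|J|\le e_\epsilon$ already gives $\|J(u_\epsilon,\nabla_\epsilon)\|_{L^1}\le E_\epsilon(u_\epsilon,\nabla_\epsilon)=O(1)$; inside each inner tube the planar Jacobian concentrates as $\epsilon\to 0$ on the zero locus of $u_\epsilon^k$ with integral $2\pi k(\Delta)$ on each normal slice, while outside the cores $J$ vanishes identically and on the transition annulus it is $o(1)$, so integrating against a test $(n-2)$-form and using Fubini produces $J(u_\epsilon,\nabla_\epsilon)\rightharpoonup 2\pi\sum_\Delta k(\Delta)[\Delta]=2\pi P$.

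The main obstacle is the gauge matching in the transition annulus: both $\alpha_\epsilon^k$ and $\alpha_*$ have a singular $k\,d\theta$ piece coming respectively from the planar vortex and from the winding of $v$ around $\Delta$, and it is only after aligning these through the common trivialization and gauge $\phi_\Delta$ that their difference becomes exponentially small by virtue of \eqref{decay}. This alignment has to be carried out consistently across all simplices, but since the tubes $V_\delta(\Delta)$ are disjoint, the constructions do not interact and the only global requirement is that the patched outer pair $(v,\nabla_*)$ is well-defined on the common complement, which is guaranteed by Proposition \ref{densityProp}. Once this bookkeeping is in place, the result for a general integral cycle $\Gamma$ follows from the polyhedral approximation in Proposition \ref{densityProp} and a standard diagonal argument.
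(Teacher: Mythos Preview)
Your proposal does not address Theorem \ref{taubes} at all. Theorem \ref{taubes} asserts the existence of the planar $k_0$-vortex on $L\to\C$ with the stated energy quantization and decay estimates; this is a two-dimensional existence and regularity result, and the paper does not prove it from scratch but cites Jaffe--Taubes \cite[Theorem~III.2.3, Theorem~III.8.1]{JaffeTaubes}, adding only the brief observation that the $\epsilon$-dependent solutions arise from $(u_1,\nabla_1)$ by scaling, and that after the gauge choice $u_1/|u_1|=e^{ik_0\theta}$ for $|z|\ge 1$ the identity $\langle\nabla_1 u_1,iu_1\rangle=|u_1|^2((u_1)^*(d\theta)-\alpha_1)$ yields \eqref{est.non.gauge} from smoothness and the decay of $|\nabla_1 u_1|$. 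What you have written is instead a sketch of the proof of Proposition \ref{glueing}, the recovery-sequence construction that \emph{uses} Theorem \ref{taubes} as an input.

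Even as a sketch for Proposition \ref{glueing}, there is a genuine gap: you claim that outside the tubes $V_\delta(\Delta)$ the energy density of the background pair $(v,\nabla_*)$ vanishes identically, but $v$ from Proposition \ref{densityProp} is only smooth on $M\setminus\mathcal{S}_{n-2}$, and the tubes cover only the shrunk simplices $\Delta_\delta$, not a neighborhood of the $(n-3)$-skeleton $K=\bigcup\spt(\partial\Delta)$. Near $K$ your glued pair is not even smooth. The paper handles this with a second cutoff $\rho_\delta$ supported away from $B_{2C'\delta}(K)$, setting $(u_\epsilon,\nabla_\epsilon)=(0,\nabla_0)$ near $K$, and then uses $\operatorname{vol}(B_r(K))=O(r^3)$ together with the specific choice $\delta(\epsilon)=\epsilon^{3/4}$ (so that $\delta^3/\epsilon^2\to 0$) to show that the energy contribution of $B_{4C'\delta}(K)$ is $o(1)$. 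Your choice $\delta=\sqrt{\epsilon}$ would fail this step, since $\delta^3/\epsilon^2=\epsilon^{-1/2}\to\infty$.
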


Note that the pairs $(u_\epsilon,\nabla_\epsilon)$ can be obtained from $(u_1,\nabla_1)$ by scaling.
The exponential decay is proved in \cite[Theorem~III.8.1]{JaffeTaubes}; see also the proof of \cite[Corollary~5.4]{PigatiStern}.
As for the last part, by a change of gauge we can assume $u_1/|u_1|=e^{ik_0\theta}$ for $|z|\ge 1$.
Observing that $$\langle \nabla_1 u_1,iu_1 \rangle=|u_1|^2(u_1^*(d\theta)-\alpha_1),$$ we deduce \eqref{est.non.gauge} from the smoothness of the pair and the decay for $|\nabla_1 u_1|$; the conclusion for arbitrary $\epsilon$ then follows.

We proceed now to the proof of Proposition \ref{glueing}, from which the final part of the $\Gamma$-convergence result stated in Theorem \ref{GammaConvThm} will follow.
\begin{proof}[Proof of Proposition \ref{glueing}]
Let $P$ be a polyhedral cycle and $v\in \mathcal{U}_p(L)$ a singular unit section with $J(v)=2\pi P$ as in the conclusion of Proposition \ref{densityProp}. Fix an $(n-2)$-simplex $\Delta$, a small parameter $\delta>0$, and set $\lambda:=\frac{c'}{3}\delta$. Let $k_0=k(\Delta)$ be the constant multiplicity with which $\Delta$ appears in the polyhedral cycle $P$. 
In the sequel, we will identify $V_\delta(\Delta)$ with $\Delta_\delta\times B_{3\lambda}^2$,
with respect to a fixed trivialization of the normal bundle to $\Delta$.
Also, the vector bundle $L$ is trivial near $\Delta$; hence, we can identify the section $v$ with a smooth $S^1$-valued map on $V_\delta(\Delta)\setminus \Delta$.

We fix a couple $(u_\epsilon',d-i\alpha_\epsilon')$ as in Theorem \ref{taubes}, with degree $k_0$.
Note that, for any $p\in P$, $v$ has degree $k_0$ on the loop $\theta\mapsto (p,\lambda e^{i\theta})$, since $J(v)=2\pi P$. Hence, identifying $u_\epsilon'$ and $\alpha_\epsilon'$ with their pullback under the projection $V_\delta(\Delta)=\Delta_\delta\times B_{3\lambda}^2\to B_{3\lambda}^2\subset\C$, we can write
\begin{align}\label{def.f}
	\frac{u_\epsilon'}{|u_\epsilon'|} = e^{if} v
\end{align}
with $f:\C\setminus\{0\}\to\R$ smooth and depending on $\epsilon$. We then define the new sections 
\begin{align*}
	\tilde u_\epsilon := [1-\chi(1-|u_\epsilon'|)]e^{i\chi f}v,
\end{align*}
and one-forms
\begin{align*}
	\tilde \alpha_\epsilon := \chi\alpha_\epsilon'+(1-\chi)(u_\epsilon')^*(d\theta)+d((\chi-1)f),
\end{align*}
where $\chi:\C\to\R$ is a smooth cut-off function such that $0\le\chi\le 1$, $|d\chi|\le 2/\lambda$ and
\begin{equation*}
\chi(z) = \begin{cases}
1 & \text{for $\vert z \vert \leq \lambda$,} \\
0 & \text{for $\vert z \vert \geq 2\lambda$}. 
\end{cases}
\end{equation*}
Note that the newly defined couples of sections and connections reduce to
\begin{equation*}
(\tilde u_\epsilon,\tilde \alpha_\epsilon) = \begin{cases}
(u_\epsilon',\alpha_\epsilon') & \text{for $|z|<\lambda$}, \\
(v,v^*(d\theta))  & \text{for $|z|>2\lambda$}. 
\end{cases}
\end{equation*}
In particular, the energy density $e_{\epsilon}(\tilde{u}_{\epsilon},d-i\tilde\alpha_{\epsilon})$ of this couple vanishes for $|z|>2\lambda$.
Also, $1-|\tilde u_\epsilon|=\chi(1-|u_\epsilon'|)$, so that the inequality
\begin{align}\label{w.est}
	&(1-|\tilde u_\epsilon|^2)^2\le (1-|u_\epsilon'|^2)^2
\end{align}
holds. Moreover, on the region $\Omega_{\lambda}:=\{\lambda<|z|<2\lambda\}$, using that $(u_\epsilon')^*(d\theta)$ is closed we compute
\begin{align*}
	&d\tilde \alpha_\epsilon
	= \chi d\alpha_\epsilon'+d\chi\wedge(\alpha_\epsilon'-(u_\epsilon')^*(d\theta)).
\end{align*}
Since $\langle \nabla_\epsilon'u_\epsilon',iu_\epsilon' \rangle
= |u_\epsilon'|^2 ((u_\epsilon')^*(d\theta)-\alpha_\epsilon')$,
in view of \eqref{decay} we can conclude that
\begin{align} \label{expDecayForm}
	&\epsilon|d\tilde \alpha_\epsilon|
	\le \epsilon |d\alpha_{\epsilon}'|+\frac{2\epsilon}{\lambda}|u_{\epsilon}'|^{-1}| \nabla_{\epsilon}'u_{\epsilon}'| \le C\frac{1+\epsilon/\lambda}{\epsilon}e^{-c\lambda/\epsilon}
\end{align}
on $\Omega_{\lambda}$, provided that $\lambda/\epsilon$ is big enough.
Also,
\begin{align*}
	&d\tilde u_\epsilon
	= O(|d\chi|(1-|u_\epsilon'|))+O(|d|u_\epsilon'||)
	+ i\tilde u_\epsilon(d(\chi f)+v^*(d\theta)),
\end{align*}
and recalling that $v^*(d\theta)=(u_\epsilon')^*(d\theta)-df$, we conclude that
\begin{align*}
	&(d-i\tilde \alpha_\epsilon)\tilde u_\epsilon
	= O(|d\chi|(1-|u_\epsilon'|))+O(|d|u_\epsilon'||)
	+i\chi\tilde u_\epsilon((u_\epsilon')^*(d\theta)-\alpha_\epsilon').
\end{align*}
Denoting $\tilde \nabla_\epsilon:=d-i\tilde \alpha_\epsilon$ and using that $|d|u_\epsilon'||\le|\nabla_\epsilon' u_\epsilon'|$, we obtain the decay 
\begin{align} \label{expDecayGrad}
	&|\tilde \nabla_\epsilon \tilde u_\epsilon|\le C\frac{1+\epsilon/\lambda}{\epsilon}e^{-c\lambda/\epsilon}
\end{align}
on $\Omega_{\lambda}$.

Choose now $\delta=\delta(\epsilon):=\epsilon^{3/4}$, so that $\lambda(\epsilon)/\epsilon\to\infty$ as $\epsilon\to 0$.
Since the slices $\exp(B_{c'\delta}^\perp(p))$ are orthogonal to $\Delta$ and have area comparable with $\lambda^2$, we deduce from \eqref{w.est}, \eqref{expDecayForm} and \eqref{expDecayGrad} that the energy of the couple $(\tilde u_\epsilon,\tilde \nabla_\epsilon)$ is bounded as follows
\begin{align*}
E_\epsilon(\tilde u_\epsilon,\tilde \nabla_\epsilon) &=2\pi |k_0|\mathcal{H}^{n-2}(\Delta)(1+o(1))+O(\delta^2\epsilon^{-2}e^{-c\delta/\epsilon}) \\
&=2\pi |k_0|\mathcal{H}^{n-2}(\Delta)+o(1),
\end{align*}
with $o(1)$ an infinitesimal term as $\epsilon\to 0$.

Denote by $K:=\bigcup\spt{\de\Delta}$ the $(n-3)$-skeleton of the triangulation. Let us choose $C'>1$ such that $q\in B_{C'\delta}(K)$ whenever $\dist(q,\mathcal{S}_{n-2})\le c'\delta$ and $q\not\in\bigcup_\Delta V_\delta(\Delta)$.
Note that the pairs glue together to give a pair $(\tilde u_\epsilon,\tilde \nabla_\epsilon)$ on the set $M\setminus \bar B_{C'\delta}(K)$ by declaring that $(\tilde u_\epsilon,\tilde \nabla_\epsilon)$ is given by $(v,\nabla_v)$ on the complement of $\bigcup_\Delta V_\delta(\Delta)$, with $\nabla_v$ the connection making $v$ a parallel section. In order to have a pair defined on all of $M$, we pick a smooth cut-off function $\rho_\delta$ defined by 
\begin{equation}
\rho_\delta = 
\begin{cases}
0 & \text{on $B_{2C'\delta}(K)$,} \\
1 & \text{on $M\setminus B_{4C'\delta}(K)$}, 
\end{cases}
\end{equation}
satisfying the additional bound $|d\rho_\delta|\le \delta^{-1}$. With $\nabla_0$ a fixed reference connection, we claim that the couple
\begin{align*}
	&(u_\epsilon,\nabla_\epsilon)
	:=(\rho_\delta \tilde u_\epsilon,(1-\rho_\delta)\nabla_0+\rho_\delta\tilde \nabla_\epsilon)
\end{align*}
has the desired properties. As a first trivial observation, note that near $K$ the pair $(u_\epsilon, \nabla_\epsilon)$ is given by $(0, \nabla_0)$, i.e., the trivial section with the reference connection. 

Next, since $\operatorname{vol}(B_r(K))=O(r^3)$, we have the estimate 
\begin{align}\label{dummy.bound}
	\lim_{\epsilon \rightarrow 0} \int_{B_{4C'\delta}(K)}(|d\rho_\delta|^2+\epsilon^{-2})\le \lim_{\epsilon\rightarrow 0} (\delta(\epsilon)^{-2}+\epsilon^{-2})\cdot C \delta(\epsilon)^3 = 0,
\end{align}
since $\delta(\epsilon)=\epsilon^{3/4}$. Fixing again a simplex $\Delta$, we write $\nabla_0=d-i\alpha_\Delta$
with respect to the chosen trivialization near $\Delta$. Thus, 
\begin{align*}
	&\nabla_\epsilon=d-i(1-\rho_\delta)\alpha_\Delta-i\rho_\delta\tilde \alpha_\epsilon.
\end{align*}
Note that $\nabla_{\epsilon}u_{\epsilon}=\tilde u_\epsilon d\rho_\delta+\rho_\delta\nabla_{\epsilon}\tilde u_\epsilon$ and that
the trivialization can be chosen to guarantee $|\alpha_\Delta|+|d\alpha_\Delta|\le C(M,L)$. In view of \eqref{dummy.bound}, in order to show that the energy of the couple $(u_\epsilon,\nabla_\epsilon)$ on $B_{4C'\delta}(K)$ is infinitesimal, we just have to show that the two quantities
\begin{align*}
	&\int_{B_{4C'\delta}(K)\setminus\bigcup_\Delta V_\delta(\Delta)}(|\rho_\delta\nabla_0 v|^2
	+\epsilon^2|F_{(1-\rho_\delta)\nabla_0+\rho_\delta\nabla_v}|^2)
\end{align*}
and
\begin{align*}
	&\int_{B_{4C'\delta}(K)\cap V_\delta(\Delta)}(|\rho_\delta\nabla_0\tilde u_\epsilon|^2
	+\epsilon^2|F_{(1-\rho_\delta)\nabla_0+\rho_\delta\tilde\nabla_\epsilon}|^2)
\end{align*}
converge to zero (since the contribution of $\tilde\nabla_{\epsilon}\tilde u_\epsilon$ is infinitesimal on $B_{4C'\delta}(K)$). The first assertion follows from \eqref{est.v} and the fact that the integrand is supported on $\{\operatorname{dist}(\cdot,\mathcal{S}_{n-2})\ge c'\delta\}$, which implies that
\begin{align*}
	&\int_{B_{4C'\delta}(K)\setminus\bigcup_\Delta V_\delta(\Delta)}|\rho_\delta\nabla_0 v|^2=O(\delta^3\cdot\delta^{-2})=O(\delta)
\end{align*}
and similarly
\begin{align*}
	&\int_{B_{4C'\delta}(K)\setminus\bigcup_\Delta V_\delta(\Delta)}\epsilon^2|F_{(1-\rho_\delta)\nabla_0+\rho_\delta\nabla_v}|^2
	=O(\delta^3\cdot\epsilon^2\delta^{-4})=O(\delta).
\end{align*}
As for the second one, by \eqref{dummy.bound} it is enough to prove that, for $p\in\Delta_\delta$,
\begin{align*}
	&\int_{\{p\}\times B_{3\lambda}^2}\Big(|d\tilde u_\epsilon|^2+\frac{\epsilon^2}{\delta^2}|\tilde\alpha_\epsilon|^2\Big)\le C\log(\epsilon^{-1}).
\end{align*}
However, by \eqref{est.v} and \eqref{est.non.gauge}, $dv$, $d\chi$, $\alpha_\epsilon'$ and $(u_\epsilon')^*(d\theta)$ at the point $(p,z)$ are all bounded by $C\delta^{-1}$ on the region $\{|z|>\lambda=\frac{c'}{3}\delta\}$, which implies $|df|\le C\delta^{-1}$ and $|f|\le C$ by \eqref{def.f}. Since this region has area $O(\delta^2)$, its contribution is bounded.
On the other hand, $(\tilde u_\epsilon,\tilde\alpha_\epsilon)=(u_\epsilon',\alpha_\epsilon')$ on $\{|z|\le\lambda\}$;
using again \eqref{est.non.gauge}, the claim follows.

Finally, note that $J(u_\epsilon,\nabla_\epsilon) \rightharpoonup 2\pi P$ as currents. 
Indeed, with the same computations as above, we obtain that $\nabla_0 u_\epsilon$ is bounded in $L^p$ independently of $\epsilon$, for any $p<2$. But $u_\epsilon\to v$ almost everywhere,
hence weakly in $W^{1,p}(M,L)$, which gives
\begin{align*}
	J(u_\epsilon,\nabla_\epsilon) \rightharpoonup J(v) = 2\pi P, 
\end{align*}
again as currents as $\epsilon$ goes to 0. 
\end{proof}

\section{Comparison of the min-max constructions}\label{min-max.sec}

With the $\Gamma$-convergence result established, we turn now to the proof of the min-max comparison described in Theorem \ref{WidthCompThm}. The outline of the proof is broadly similar to that of the analogous result of Guaraco \cite[Proposition 8.19]{Guaraco} in the Allen--Cahn setting. First, we employ Theorem \ref{GammaConvThm} to extract from continuous families of pairs $(u,\nabla)$ discretized families of $(n-2)$-boundaries with mass bounded above by $E_{\epsilon}(u_{\epsilon},\nabla_{\epsilon})+o(1)$. To complete the proof of Theorem \ref{WidthCompThm}, we then have to show that the homotopy class of this associated family of cycles is determined by that of the family of pairs $(u,\nabla)$ in the desired way. 

The details of the proof are somewhat more involved than their codimension-one analog, since the assignment from pairs $(u,\nabla)$ to the space of $(n-2)$-boundaries is less explicit, and the homotopy groups of the space of $(n-2)$-boundaries are slightly more complicated. In the next subsection, we recall the relevant definitions from Almgren's min-max methods, and define carefully the min-max values to which Theorem \ref{WidthCompThm} applies.

\subsection{Natural min-max constructions for $\bm{E_{\epsilon}}$} Throughout this section, let $L=\mathbb{C}\times M\to M$ be the \emph{trivial} line bundle over a closed, oriented $n$-manifold $(M^n,g)$ of dimension $n\geq 3$. Fixing a trivialization of $L$, the space of pairs $(u,\nabla)$ consisting of sections $u\in \Gamma(L)$ and hermitian connections $\nabla$ can then be identified with pairs $(u,\alpha)$, where $u:M\to \mathbb{C}$ is a complex-valued map and $\alpha\in \Omega^1(M)$ is a one-form such that $\nabla=d-i\alpha$. 

For a fixed $p>n$, we will view $E_{\epsilon}$ as a functional on the Banach space $\widehat{X}$ consisting of pairs $(u,\nabla)$ where $u\in [W^{1,2}\cap L^p](M)$ and $\nabla=d-i\alpha$ for $\alpha\in W^{1,2}(M)$ (with topology induced by the norm $\|du\|_{L^2(M)}+\|u\|_{L^p(M)}+\|\alpha\|_{W^{1,2}(M)}$), equipped with the Finsler structure
\begin{equation}\label{fins.def}
\|(v,\beta)\|_{(u,\nabla)}:=\|v\|_{L^p(M)}+\|\nabla v\|_{L^2(M)}+\|\beta\|_{L^2(M)}+\|D\beta\|_{L^2(M)},
\end{equation}
where $D$ is the (Levi-Civita) covariant derivative of the one-form $\beta$. It is straightforward to check (cf.\ \cite[Section~7]{PigatiStern}) that the energies $E_{\epsilon}$ define $C^1$ functionals on $\widehat{X}$, and an adaptation of the proof of \cite[Proposition 7.6]{PigatiStern} shows that they satisfy a variant (modulo gauge transformations) of the Palais--Smale condition with respect to the Finsler structure \eqref{fins.def}, making $\widehat{X}$ an appropriate setting for the min-max construction of critical points (provided the nonlinear potential $W$ is modified as described in \cite[Section~7]{PigatiStern}).

\begin{remark} The Palais--Smale result stated in \cite[Proposition 7.6]{PigatiStern} for $E_{\epsilon}$ in $\widehat{X}$ is not quite correct as written when the base manifold $M$ has $H^1(M;\mathbb{Q})\neq 0$. This is due to the fact that a sequence $(u_j,\nabla_j)$ for $E_{\epsilon}$ which is Palais--Smale with respect to the natural Banach norm on $\widehat{X}$ may fail to yield another Palais--Smale sequence under the change of gauge $(\phi_j u_j,\nabla_j-\phi_j^*(d\theta))$ for a sequence of harmonic map $\phi_j:M\to S^1$. However, it is easy to check that the Palais--Smale property with respect to the Finsler structure \eqref{fins.def} is preserved under harmonic change of gauge, and \cite[Proposition 7.6]{PigatiStern} holds with the Banach norm replaced by this Finsler structure.
\end{remark}

Though the space $\widehat{X}$ itself is topologically trivial, the functionals $E_{\epsilon}$ have a rich min-max theory in the $\epsilon\to 0$ limit, owing to the topology of the moduli space
$$\mathcal{M}:=(\widehat{X}\setminus X_0)/\mathcal{G},$$
where $X_0:=\{(u,\alpha)\in \widehat{X} : u\equiv 0\}$ and $\mathcal{G}:=W^{2,2}(M,S^1)$ is the gauge group. Indeed, writing
$$Y:=\{(u,\alpha)\in \widehat{X} : d^*\alpha=0\},$$
note that there is a natural retraction $\rho_C:\widehat{X}\to Y$ given by passing to the Coulomb gauge
$$\rho_C(u,\alpha):=(e^{-i\varphi_{\alpha}}u,\alpha-d\varphi_{\alpha}),$$
where $\varphi_{\alpha}\in W^{2,2}(M,\mathbb{R})$ is the unique solution of 
$$d^*d\varphi_{\alpha}=d^*\alpha \quad \text{and} \quad \int_M\varphi_{\alpha}=0.$$
It is clear that the quotient map $Y\setminus X_0\to\mathcal{M}$ is surjective. The elements of $\mathcal{G}$ sending a given couple in $Y\setminus X_0$ to a couple in the same space are precisely the harmonic maps $\mathcal{H}=\operatorname{Harm}(M,S^1)$, so we can identify $\mathcal{M}$ (homeomorphically) with the quotient
$$\mathcal{M}=(Y\setminus X_0)/\mathcal{H}.$$
Moreover, note that the harmonic $S^1$-valued maps $\mathcal{H}$ contain $S^1$ as a subgroup (by identification with the constant maps), and the quotient $\mathcal{H}/S^1$ has a natural identification
$$\mathcal{H}/S^1\cong [M: S^1]\cong H^1(M;\mathbb{Z}),$$
since each homotopy class in $[M:S^1]$ is uniquely represented in $\mathcal{H}$ up to rotations. We can then view $\mathcal{M}$ as the quotient
$$\mathcal{M}=[(Y\setminus X_0)/S^1]/H^1(M;\mathbb{Z}),$$
of the quotient space $(Y\setminus X_0)/S^1$ by the free and properly discontinuous action of $H^1(M;\mathbb{Z})$.
Moreover, we have the following facts, allowing to extract the algebraic topology invariants of $\mathcal{M}$.

\begin{proposition}\label{tech.topo}
The projection $Y\setminus X_0\to (Y\setminus X_0)/S^1$ is a fiber bundle and, hence, a weak fibration. The former space has trivial homotopy groups, while the latter is weakly homotopy equivalent to $\mathbb{CP}^\infty$, and is the universal cover of $\mathcal{M}$.
\end{proposition}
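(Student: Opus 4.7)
The plan is to verify the three claims in the natural order. First, for the principal $S^1$-bundle structure of $\pi \colon Y\setminus X_0 \to (Y\setminus X_0)/S^1$, the $S^1$-action $e^{i\theta}\cdot(u,\alpha):=(e^{i\theta}u,\alpha)$ is free on $Y\setminus X_0$ (since $u\not\equiv 0$) and linear in the $u$ factor. To exhibit local triviality, given any base point $(u_0,\alpha_0)$, I would choose a continuous $\mathbb{C}$-linear functional $\ell$ on the Banach space $(W^{1,2}\cap L^p)(M,\mathbb{C})$ with $\ell(u_0)\neq 0$ (e.g.\ integration against a smooth compactly supported test function near a point where $u_0$ does not vanish); on the open neighborhood $\{\ell(u)\neq 0\}$, the formula $(u,\alpha)\mapsto(e^{-i\arg\ell(u)}u,\alpha)$ defines a continuous local section of $\pi$, establishing that $\pi$ is a principal $S^1$-bundle and, in particular, a Serre fibration.

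Second, I would show that $Y\setminus X_0$ is contractible. The linear splitting $Y=U\oplus V$, with $U=(W^{1,2}\cap L^p)(M,\mathbb{C})$ the section factor and $V=\{\alpha\in W^{1,2}(\Lambda^1 M):d^*\alpha=0\}$ the connection factor, yields the topological product decomposition $Y\setminus X_0=(U\setminus\{0\})\times V$. Since $V$ is a Banach space it is contractible, while $U\setminus\{0\}$ deformation retracts onto its unit sphere, which is an infinite-dimensional Banach sphere and hence contractible by the classical theorem of Bessaga--Klee. Therefore $Y\setminus X_0$ has trivial homotopy groups.

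Third, from the long exact sequence of the fibration $S^1\hookrightarrow Y\setminus X_0\twoheadrightarrow(Y\setminus X_0)/S^1$ together with the vanishing of $\pi_*(Y\setminus X_0)$, the connecting homomorphism produces isomorphisms $\pi_k((Y\setminus X_0)/S^1)\cong\pi_{k-1}(S^1)$ for every $k\geq 1$. Thus $(Y\setminus X_0)/S^1$ has the homotopy groups of $K(\mathbb{Z},2)$: trivial except for $\pi_2\cong\mathbb{Z}$. Since $\pi$ is a principal $S^1$-bundle with contractible total space, the base is a model of $BS^1$; concretely, the classifying map $(Y\setminus X_0)/S^1\to\mathbb{CP}^\infty$ induces an isomorphism on $\pi_2$ (the first Chern class of $\pi$ generates $\pi_2$ of the base via the same connecting map), and therefore a weak homotopy equivalence, all other homotopy groups vanishing on both sides.

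For the universal cover assertion, I would verify that the induced action of $H^1(M;\mathbb{Z})\cong\mathcal{H}/S^1$ on $(Y\setminus X_0)/S^1$ is free and properly discontinuous. Freeness follows because if a harmonic map $\phi\colon M\to S^1$ fixes some class $[(u,\alpha)]$, then one sees $\phi^*(d\theta)=0$, forcing $\phi$ to be constant and hence trivial in $\mathcal{H}/S^1$. Proper discontinuity is automatic since $H^1(M;\mathbb{Z})$ is a discrete group acting by homeomorphisms on a Hausdorff space (Hausdorffness being another consequence of the slicing construction in step one). Together with the simple connectivity of $(Y\setminus X_0)/S^1$ from step three, this exhibits $(Y\setminus X_0)/S^1$ as the universal cover of $\mathcal{M}$. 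The main technical input is the infinite-dimensional sphere contractibility used in step two; every other step is essentially formal once the slicing argument and the Banach-space structure are in place.
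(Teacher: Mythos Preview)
Your argument is essentially correct and valid, but it proceeds quite differently from the paper's proof. The paper avoids invoking the Bessaga--Klee theorem: instead of citing contractibility of infinite-dimensional spheres, it builds an explicit weak equivalence by choosing a dense linearly independent family $\{u_k\}$ in $(W^{1,2}\cap L^p)(M,\mathbb{C})$, letting $\hat S^\ell$ be the sphere in the span of $u_1,\dots,u_\ell$, and showing that any compact subset of $\hat S/S^1$ deforms into some $\hat S^\ell/S^1\cong\mathbb{CP}^{\ell-1}$ via nearest-point projection. This simultaneously proves both that $\hat S$ (and hence $Y\setminus X_0$) is weakly contractible and that $\hat S/S^1$ is weakly equivalent to $\mathbb{CP}^\infty$, without passing through the long exact sequence or classifying-space machinery. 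Your route is cleaner in the sense that it quotes a sharp theorem and then reads everything off the fibration sequence; the paper's route is more self-contained and makes the comparison with $\mathbb{CP}^\infty$ concrete rather than abstract.

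One genuine error: your assertion that proper discontinuity is ``automatic since $H^1(M;\mathbb{Z})$ is a discrete group acting by homeomorphisms on a Hausdorff space'' is false in general---think of $\mathbb{Z}$ acting on $\mathbb{R}^2\setminus\{0\}$ by $(x,y)\mapsto(2^nx,2^{-n}y)$. Here you need to use the specific structure of the action: a nonconstant harmonic map $\phi:M\to S^1$ acts on the connection component by $\alpha\mapsto\alpha-\phi^*(d\theta)$, and the forms $\phi^*(d\theta)$ range over a lattice in the finite-dimensional space $\mathcal{H}^1(M)$ of harmonic one-forms. Projecting to the harmonic part of $\alpha$ therefore exhibits the action as translation by a lattice, which is properly discontinuous; this lifts back to the full space. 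The paper itself states proper discontinuity without proof just before the proposition, so the omission is shared, but your stated justification should be corrected.
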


\begin{proof}
	Let $Q:=(Y\setminus X_0)/S^1$ and denote $\pi:Y\setminus X_0\to Q$ the projection.
	Given $(u,\alpha)\in Y\setminus X_0$, we can find a measurable set $E\subseteq M$ such that $\int_E u\neq 0$.
	In particular, there exists $\delta>0$ such that $\int_E v\neq 0$ for all couples $(v,\beta)$ with distance less than $\delta$ from the $S^1$-orbit of $(u,\nabla)$---namely, such that $\|(v,\beta)-e^{i\theta}\cdot(u,\alpha)\|_{\hat X}<\delta$ for some $e^{i\theta}\in S^1$. These couples form an open set $\pi^{-1}(U)$, for $U$ open in the quotient $Q$. It is then easy to check that the map
	$$\pi^{-1}(U)\to S^1\times U,\quad (v,\beta)\mapsto\Big(\textstyle{\int_E v\,/\,|\int_E v|},\pi((v,\beta))\Big)$$
	gives a local trivialization over $U$. Hence, $\pi$ is a fiber bundle and thus a weak fibration (see \cite[Proposition~4.48]{Hatcher}).
	
	To check the second statement, note that $Q$ (deformation) retracts onto $\hat S/S^1$, where $\hat S$ is the unit sphere of the Banach space $[W^{1,2}\cap L^p](M,\mathbb{C})$, viewed as a subset of $\hat X$ with trivial connection component.
	Given a dense, linearly independent set $\{u_k\}_{k=1}^\infty$ in this Banach space, we denote by $H^\ell$ the linear span of $\{u_1,\dots,u_\ell\}$ and by $\pi_\ell:[W^{1,2}\cap L^p](M,\mathbb{C})\to H^\ell$ the nearest point projection, which is well-defined and continuous since $H^\ell$ is finite-dimensional and the Banach space is strictly convex.
	
	Letting $\hat S^\ell:=\hat S\cap H^\ell$, note that the union $P:=\bigcup_\ell(\hat S^\ell/S^1)$, endowed with the topology induced by the subspaces $\hat S^\ell/S^1$, is homeomorphic to $\mathbb{CP}^\infty$, and the identity map $i:P\to\hat S/S^1$ is continuous. We claim that, for any compact set $K\subset\hat S/S^1$, the inclusion $K\hookrightarrow\hat S/S^1$ can be deformed to a map $K\to \hat S_\ell/S^1$ for some $\ell$ (within maps into $\hat S/S^1$). This implies that $i$ induces isomorphisms $i_*$ on homotopy groups, because then any map $S^k\to\hat S/S^1$ can be deformed to a map with values in $\hat S^\ell/S^1$ for some $\ell$ (hence $i_*$ is surjective),
	and a homotopy in $\hat S/S^1$ between two maps $S^k\to\hat S^\ell/S^1$ can be deformed to a homotopy in $\hat S^{\ell'}/S^1$ with $\ell'\ge\ell$ (hence $i_*$ is injective).
	
	To prove the claim, note that for any $[u]\in\hat S/S^1$ there exists $\ell$ such that the distance from $u$ to $H^\ell$ is less than $1$, and the same holds on a neighborhood of $[u]$. By compactness of $K$, we can find $\ell$ such that this is true for all the elements of $K$. The map $([u],t)\mapsto \frac{(1-t)u+t\pi_\ell(u)}{\|(1-t)u+t\pi_\ell(u)\|_{\hat X}}$ gives the desired deformation.
	
	The fact that $\hat S$, and hence $Y/X_0$, have trivial homotopy groups is proved in the same way.
	The last conclusion follows from the fact that $\mathbb{CP}^\infty$ is simply connected.
\end{proof}

We therefore conclude that the path-connected space $\mathcal{M}$ has $\pi_1(\mathcal{M})\cong H^1(M;\mathbb{Z})$, $\pi_2(\mathcal{M})\cong \mathbb{Z}$, and $\pi_k(\mathcal{M})=0$ for $k\geq 3$; or equivalently, for $k>0$,
$$\pi_k(\mathcal{M})\cong H_{n-2+k}(M;\mathbb{Z}).$$

The results of this section concern the min-max energies associated to the generator of $\pi_2(\mathcal{M})$, and to each class $\lambda\in H_{n-1}(M;\mathbb{Z})\cong \pi_1(\mathcal{M})$ (with basepoint the trivial pair $(u_0\equiv 1,\nabla_0\equiv d) \mod \mathcal{G}$). In practice, we work with their lifts to maps $\bar D^2\to \widehat{X}$ and $[0,1]\to \widehat{X}$.

As in \cite{PigatiStern}, consider the collection 
$$\mathcal{C}_2\subset C^0(\bar D^2, \widehat{X})$$
of continuous families
$$\bar D^2\ni y\mapsto (u_y,\nabla_y)\in \widehat{X}$$
parametrized by the closed unit disk $\bar D^2\subset \mathbb{C}$, subject to the boundary condition
$$u_y\equiv y \quad  \text{and} \quad \nabla_y\equiv d \quad \text{for } y\in \partial D^2=S^1.$$
By the long exact sequence for homotopy groups in weak fibrations, families in $\mathcal{C}_2$ (avoiding $X_0$) descend to the generators of $\pi_2(\mathcal{M})$. It was shown in \cite[Section 7]{PigatiStern} that the associated min-max energies
\begin{equation}
\mathcal{E}_{\epsilon}(\mathcal{C}_2):=\inf_{F\in \mathcal{C}_2}\max_{y\in \bar D^2}E_{\epsilon}(F_y)
\end{equation}
are uniformly bounded from above and below as $\epsilon\to 0$, arise as the energies $E_{\epsilon}(u_{\epsilon},\nabla_{\epsilon})$ of nontrivial critical points $(u_{\epsilon},\nabla_{\epsilon})$ for $E_{\epsilon}$, and converge subsequentially to the mass of a (nontrivial) stationary integral $(n-2)$-varifold, up to a factor of $2\pi$. Likewise, for each nontrivial $\lambda\in H_{n-1}(M;\mathbb{Z})$, we can consider the collection 
$$\mathcal{C}_{\lambda}\subset C^0([0,1],\widehat{X})$$
of continuous families $[0,1]\ni t\mapsto (u_t,\nabla_t)\in \widehat{X}$ satisfying
$$(u_0,\nabla_0)\equiv (1,d), \quad (u_1,\nabla_1)\equiv (\phi, d-i\phi^*(d\theta)),$$
where $\phi\in C^{\infty}(M,S^1)$ is a map in the homotopy class dual to $\lambda$ (i.e., generic fibers of $\phi$ are homologous to $\lambda$). Families in $\mathcal{C}_{\lambda}$ (avoiding $X_0$) descend to loops in $\mathcal{M}$, whose class in $\pi_1(\mathcal{M})$ is determined by $\lambda$, and we will likewise consider their min-max energies
$$\mathcal{E}_{\epsilon}(\lambda):=\inf_{F\in \mathcal{C}_{\lambda}}\max_{t\in [0,1]}E_{\epsilon}(F_t).$$

\begin{remark}
	Note that a family as above, with energy bounded by a given $\Lambda$ (fixed), must avoid the degenerate set of couples $X_0$ for $\epsilon$ small enough.
	Using Proposition \ref{tech.topo}, one can check that the min-max values defined above coincide with the corresponding ones for the homotopy groups of $\mathcal{M}$.
\end{remark}

\subsection{Natural min-max constructions for the $\bm{(n-2)}$-mass functional}
By Almgren's thesis \cite{PhDAlmgren}, we know that the space $Z\subseteq\mathcal{Z}_{n-2}(M;\mathbb{Z})$ of integral $(n-2)$-boundaries in $M$, equipped with the flat topology, has homotopy groups identical to those of $\mathcal{M}$; namely,
$$\pi_k(Z,0)\cong H_{n-2+k}(M;\mathbb{Z})$$
for $k>0$, while $\pi_0(Z)=0$.
In \cite{Alm65} (see also \cite{Pitts}), Almgren associates to each class in $\pi_k(\mathcal{Z}_m(M;\mathbb{Z}))$ a stationary integral $k$-varifold by means of a discretized min-max construction, which replaces continuous families of cycles in the flat topology with discrete families satisfying an approximate continuity condition with respect to the stronger mass topology. For our comparison results, it is convenient to work with discrete families which are fine \emph{in flat norm} and exhibit \emph{no concentration of mass}; by the interpolation arguments of \cite[Section 13]{WillmoreConj} and \cite[Theorem 2.10]{LMN}, the associated min-max masses coincide with the masses of the stationary varifolds produced by Almgren. 
\begin{remark}
While Theorems 2.10 and 2.11 of \cite{LMN} are stated for cycles with $\mathbb{Z}/2\mathbb{Z}$ coefficients, the coefficient group plays no role in these arguments.
\end{remark} 

Following the notation of \cite[Section 2]{LMN}, for $m=1$ or $2$, denote by $I^m$ the $m$-cube $I^m=[0,1]^m$, and for $j\in \mathbb{N}$, denote by $I(1,j)$ the cube complex on $I^1$ with $1$-cells (or edges)
$$[0,3^{-j}], [3^{-j}, 2\cdot 3^{-j}],\ldots, [1-3^{-j},1]$$
and $0$-cells (or vertices) $[0],[3^{-j}],\ldots,[1-3^{-j}], [1]$. Likewise, denote by $I(2,j)$ the cell complex 
$$I(2,j)=I(1,j)\otimes I(1,j)$$
on $I^2$ given by subdividing $I^2$ into $3^{2j}$ squares of area $3^{-2j}$, and denote by $I(m,j)_k$ the collection of $k$-cells of $I(m,j)$.
Given an assignment $\phi: I(m,j)_0\to \mathcal{Z}_{n-2}(M;\mathbb{Z})$, we will say that it has (flat) fineness ${\bf f}(\phi)<\delta$ if 
$$\mathcal{F}(\phi(x),\phi(y))<\delta\text{ for all adjacent vertices }x,y\in I(m,j)_0.$$

If $\phi: I(m,j)_0\to \mathcal{Z}_{n-2}(M;\mathbb{Z})$ satisfies $\phi(x)=0$ for $x\in \partial I^m$ and ${\bf f}(\phi)<\delta$ for $\delta<\delta_M$ sufficiently small, then Almgren's construction \cite{PhDAlmgren} assigns to $\phi$ a homology class $\Psi(\phi)\in H_{n-2+m}(M;\mathbb{Z})$, as follows. For each (oriented) one-cell $e=[x,y]\in I(m,j)_1$, provided $\delta>0$ is sufficiently small, we can find an integral $(n-1)$-current $S_e\in {\bf I}_{n-1}(M;\mathbb{Z})$ such that
$$\partial S_e=\phi(y)-\phi(x) \quad \text{and} \quad \mathbb{M}(S_e)\leq \epsilon_M$$
for a given small constant $\epsilon_M>0$. If $m=1$, then summing over all one-cells $e\in I(1,j)_1$ gives an $(n-1)$-cycle
$$ S={\textstyle\sum_{e\in I(1,j)_1}S_e} \in \mathcal{Z}_{n - 1}(M; \mathbb{Z}) $$
whose homology class $\Psi(\phi):=[S]\in H_{n-1}(M;\mathbb{Z})$ does not depend on the choice of small-mass fill-ins $S_e$. If $m=2$, then for each $2$-cell $\Box\in I(2,j)_2$ we denote by $S_{\Box}\in \mathcal{Z}_{n-1}(M;\mathbb{Z})$ the $(n-1)$-cycle $S_{\Box}=\sum_{e\in \partial \Box}S_e$ given by summing the fill-ins $S_e$ over all oriented edges $e$ of $\partial\Box$, and consider the (unique) $n$-current $Q_{\Box}\in {\bf I}_n(M;\mathbb{Z})$ such that
$$\partial Q_{\Box}=S_{\Box}\quad \text{and} \quad \mathbb{M}(Q_{\Box})<\frac{\vol(M)}{2}.$$
Summing over all $2$-cells $\Box\in I(2,j)_2$ then gives an $n$-cycle
$$Q={\textstyle\sum_{\Box\in I(2,j)_2}Q_{\Box}}\in \mathcal{Z}_n(M;\mathbb{Z})$$
whose homology class $\Psi(\phi):=[Q]\in H_n(M;\mathbb{Z})$ is independent of the choice of small-mass fill-ins $S_e$. 

Now, for $\eta>0$ and a discrete family
$$\phi: I(m,j)_0\to \mathcal{Z}_{n-2}(M;\mathbb{Z}),$$
define the quantity
$${\bf m}(\phi,\eta):=\sup \{\|\phi(x)\|(B_\eta(p)) \mid x\in I(m,j)_0,\ p\in M\},$$
giving the maximum amount of mass of a cycle in the family inside a ball of radius $\eta$. For $\delta \in (0,\delta_M)$ and $\lambda\in H_{n-2+m}(M;\mathbb{Z})$, and a constant $C_0=C_0(M,\lambda)<\infty$ to be chosen later, denote by $\mathcal{A}_{\delta}(\lambda)$ the collection of families
$$\phi: I(m,j)_0\to \mathcal{Z}_{n-2}(M;\mathbb{Z})$$
such that
\begin{equation}\label{delta.reqs}
{\bf f}(\phi)<\delta,\quad\sup_{r>\delta}\frac{{\bf m}(\phi,r)}{r^{n-2}}\leq C_0,
\end{equation}
and
$$\Psi(\phi)=\lambda\in H_{n-2+m}(M;\mathbb{Z}).$$
Then consider the approximate min-max widths
\begin{equation}\label{delta.width}
{\bf W}_{\delta}(\lambda):=\inf \Big\{ \max_{y\in I(m,j)_0}\mathbb{M}(\phi(y))\mid \phi \in \mathcal{A}_{\delta}(\lambda)\Big\},
\end{equation}
and define the min-max width
\begin{equation}\label{main.width}
{\bf W}(\lambda):=\inf\Big\{\liminf_{k\to\infty}\max_{y\in I(m,j_k)_0}\mathbb{M}(\phi_k(y))\Big\},
\end{equation}
where the infimum is taken over all sequences $\phi_k:I(m,j_k)_0\to \mathcal{Z}_{n-2}(M;\mathbb{Z})$ such that $\delta_M>{\bf f}(\phi_k)\to 0$, $\limsup_{k\to\infty}{\bf m}(\phi_k,r)\to 0$ as $r\to 0$, and $\Psi(\phi_k)=\lambda$.
Clearly,
\begin{equation}\label{W.Wdelta}
	{\bf W}(\lambda)\le\lim_{\delta\to 0}{\bf W}_{\delta}(\lambda)=\sup_{\delta>0}{\bf W}_{\delta}(\lambda).
\end{equation}
Since we are ruling out concentration of mass in the limit, we can appeal to the interpolation arguments of \cite[Section 13]{WillmoreConj} and \cite[Theorem 2.10]{LMN} to deduce that the widths ${\bf W}(\lambda)$ coincide with Almgren's min-max widths, and are therefore realized as the masses of stationary integral $(n-2)$-varifolds in $M$.

We can now state a more precise version of Theorem \ref{WidthCompThm}.

\begin{theorem}\label{min-max.comp} The min-max energies $\mathcal{E}_{\epsilon}(\mathcal{C}_2)$ and $\mathcal{E}_{\epsilon}(\lambda)$ for $\lambda\in H_{n-1}(M;\mathbb{Z})$ satisfy
\begin{equation}
\liminf_{\epsilon\to 0}\mathcal{E}_{\epsilon}(\mathcal{C}_2)\geq 2\pi {\bf W}([M])
\end{equation}
and
\begin{equation}
\liminf_{\epsilon\to 0}\mathcal{E}_{\epsilon}(\lambda)\geq 2\pi {\bf W}(\lambda).
\end{equation}
\end{theorem}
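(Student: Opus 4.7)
The plan is to convert a near-optimal family for $\mathcal{E}_\epsilon(\lambda)$ into a discrete family of integral $(n-2)$-cycles admissible in the definition of ${\bf W}(\lambda)$. I will focus on the case $m=1$; the case $m=2$ (corresponding to $\mathcal{C}_2$ and $\lambda=[M]$) proceeds identically on the two-dimensional cubical complex. Fix $F_\epsilon\in\mathcal{C}_\lambda$ with $\max_{t\in[0,1]}E_\epsilon(F_\epsilon(t))\le\mathcal{E}_\epsilon(\lambda)+o(1)$.

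\emph{Step 1 (gradient flow regularization).} Run the $L^2$ gradient flow of $E_\epsilon$ for a small fixed time $\tau>0$ (independent of $\epsilon$) starting from each $F_\epsilon(t)$, producing a new family $\tilde F_\epsilon:[0,1]\to\widehat X$. By the long-time existence, uniqueness and continuous dependence on initial data established at the end of the paper, $\tilde F_\epsilon$ is still continuous in $\widehat X$ and remains in $\mathcal{C}_\lambda$, since the endpoints $(1,d)$ and $(\phi,d-i\phi^*(d\theta))$ are critical points for $E_\epsilon$, hence fixed by the flow. The flow decreases energy, so $\max_t E_\epsilon(\tilde F_\epsilon(t))\le\mathcal{E}_\epsilon(\lambda)+o(1)$. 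Crucially, the Huisken-type monotonicity formula announced in the introduction yields an $(n-2)$-density bound of the form
\begin{equation*}
\int_{B_r(p)}e_\epsilon(\tilde F_\epsilon(t))\,\dvol_g\le C_0\,r^{n-2}\quad\text{for all }\epsilon<r<r_0,\ p\in M,\ t\in[0,1],
\end{equation*}
with $C_0,r_0>0$ depending only on $\tau$ and the initial family. This is the key tool preventing concentration of mass at small scales.

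\emph{Step 2 (discretization into integral cycles).} By the compactness argument in the proof of Theorem \ref{GammaConvThm}(i), for each $t$ the Jacobian $\frac{1}{2\pi}J(\tilde F_\epsilon(t))$ is close in $(C^1)^*$---hence in flat norm on the compact manifold $M$---to an integral $(n-2)$-cycle $\Gamma_\epsilon(t)\in\mathcal{Z}_{n-2}(M;\mathbb{Z})$ with $\mathbb{M}(\Gamma_\epsilon(t))\le\frac{1}{2\pi}E_\epsilon(\tilde F_\epsilon(t))+o(1)$. Since $\tilde F_\epsilon$ is continuous in $\widehat X$, the Jacobians are continuous in $(C^1)^*$, so subdividing $[0,1]$ via the cubical complex $I(1,j)$ and rounding at each vertex produces a discrete family $\phi_{\epsilon,j}:I(1,j)_0\to\mathcal{Z}_{n-2}(M;\mathbb{Z})$ of integral cycles with flat fineness ${\bf f}(\phi_{\epsilon,j})\to 0$ as $j\to\infty$, boundary values $\phi_{\epsilon,j}(0)=\phi_{\epsilon,j}(1)=0$, and maximum mass bounded by $\frac{1}{2\pi}\mathcal{E}_\epsilon(\lambda)+o(1)$. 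The density estimate of Step 1 transfers---via the flat-closeness to $\frac{1}{2\pi}J(\tilde F_\epsilon)$ and \eqref{JboundedbyE}---to the no-concentration bound $\sup_{r>\delta}r^{-(n-2)}{\bf m}(\phi_{\epsilon,j},r)\le C_0$ for a suitable $\delta=\delta(\epsilon,j)\to 0$.

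\emph{Step 3 (topological identification).} The main obstacle is verifying $\Psi(\phi_{\epsilon,j})=\lambda$. For each edge $e=[t_i,t_{i+1}]$, Almgren's small-mass fill-in $S_e$ with $\partial S_e=\phi_{\epsilon,j}(t_{i+1})-\phi_{\epsilon,j}(t_i)$ can be taken, up to a small flat-norm correction from the rounding step, to be the integer rectifiable $(n-1)$-current provided by Proposition \ref{j.props} applied to the singular unit sections associated to $\tilde F_\epsilon(t_i)$ and $\tilde F_\epsilon(t_{i+1})$, namely the coarea slice of the interpolating map $\Phi(u_i,u_{i+1})=e^{-i\langle u_i,iu_{i+1}\rangle}u_i\bar{u}_{i+1}$. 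Summing over $i$ telescopes to a current on $M$ whose homology class---by the identification $[M,S^1]\cong H^1(M;\mathbb{Z})$ and because the boundary of the family realizes $\phi:M\to S^1$ with $[\phi]$ Poincar\'e dual to $\lambda$---is precisely the Poincar\'e dual of $\lambda$. Passing to a diagonal sequence $\epsilon\to 0$, $j\to\infty$, the families $\phi_{\epsilon,j}$ become competitors in the definition \eqref{main.width} of ${\bf W}(\lambda)$, yielding $2\pi{\bf W}(\lambda)\le\liminf_{\epsilon\to 0}\mathcal{E}_\epsilon(\lambda)$. The case $m=2$ is identical, replacing $H^1$ by $H^2$ and using the degree of the boundary disk map to identify $\Psi(\phi_\epsilon)$ with the generator $[M]\in H_n(M;\mathbb{Z})$ of $\pi_2(\mathcal{M})\cong\mathbb{Z}$.
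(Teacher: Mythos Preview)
Your overall architecture is exactly that of the paper: gradient-flow regularization (Lemma~\ref{tamed.fam}), passage to singular unit sections via the compactness behind Theorem~\ref{GammaConvThm}(i) (packaged as Lemma~\ref{lbd.v2}), and then a topological identification of the Almgren class. However, Step~3 contains a genuine gap, and the two-parameter case requires more than a one-line analogy.

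\textbf{The fill-in mass bound.} You propose obtaining the small-mass fill-in $S_e$ from Proposition~\ref{j.props} applied to the singular unit sections $v_i,v_{i+1}$. That proposition gives $\mathbb{M}(S)\le\frac{1}{2\pi}\int_M|\nabla_0(v_i+v_{i+1})||v_i-v_{i+1}|$, so you need $\|\nabla_0 v_i\|_{L^p}$ uniformly bounded. But Lemma~\ref{lbd.v2} only gives $\|d(\phi_i^{-1}v_i)\|_{L^p}\le C$ for a gauge map $\phi_i$ satisfying $\|\phi_i^*(d\theta)-\Pi(\alpha_i)\|_{L^2}\le C(M)$; the maps $\phi_i$ themselves are \emph{not} bounded in $C^1$ uniformly in $i$ (the exact part of $\alpha_i$ can be enormous), so $\|\nabla_0 v_i\|_{L^p}$ need not be bounded. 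The paper handles this by interpolating between $\phi_i^{-1}v_i$ and $\phi_i^{-1}v_{i+1}$ (same $\phi_i$!) via Lemma~\ref{interp.lem}, and then controlling $\|d(\phi_i^{-1}v_{i+1})\|_{L^p}$ using $\|\phi_i^*(d\theta)-\phi_{i+1}^*(d\theta)\|_{L^p}\le\|\Pi(\alpha_i-\alpha_{i+1})\|_{L^p}+C(M)$ and the $W^{1,2}$-closeness of $\alpha_i,\alpha_{i+1}$ along the family (see Lemma~\ref{fill-in}). Your proposal skips this gauge-matching step.

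\textbf{The homology identification.} Even granting small-mass fill-ins, ``summing over $i$ telescopes'' only tells you $\partial(\sum S_i)=0$; it does not identify $[\sum S_i]\in H_{n-1}(M;\mathbb{Z})$. The maps $\Phi(v_i,v_{i+1}):M\to S^1$ are not in well-defined homotopy classes (their Jacobians are nonzero), and there is no concatenation structure on maps $M\to S^1$ analogous to paths. The paper instead constructs cylinder maps $w_i\in W^{1,p}(M\times[0,1],S^1)$ with $w_i(\cdot,0)=v_i$, $w_i(\cdot,1)=v_{i+1}$ (Lemma~\ref{interp.lem}), concatenates them into a single $w:M\times[0,1]\to S^1$ with endpoints $1$ and $\psi$, and reads off $[\sum\Gamma_i]=\lambda$ from $\langle\pi_*J(w),\zeta\rangle=\int_M\psi^*(d\theta)\wedge\zeta$ for closed $\zeta$.

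\textbf{The two-parameter case.} This is not ``identical'': one needs a fill-in $\Xi_\Box$ over each $2$-cell, compatible with the edge fill-ins, and the identification $\Psi(\beta)=[M]$ goes through a degree computation for the boundary restriction $F|_{\{x\}\times\partial I^2}$ (Lemma~\ref{square.fill}). Your sentence ``replacing $H^1$ by $H^2$ and using the degree of the boundary disk map'' points in the right direction but suppresses the construction entirely.
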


The remainder of the section is devoted to its proof.

\subsection{Taming min-max families to avoid energy concentration}
To ensure that the min-max energies $\mathcal{E}_{\epsilon}$ are bounded below by the masses of cycles satisfying \eqref{delta.reqs}, we first argue that the energies $\mathcal{E}_{\epsilon}$ are almost achieved as the maximum energy in families $(u_y,\nabla_y)$ satisfying a uniform energy density bound
$$\int_{B_r(p)}e_{\epsilon}(u_y,\nabla_y)\leq Cr^{n-2}$$
for $\epsilon(M,\delta)>0$ sufficiently small and $r\geq \delta$. 

\begin{lemma}\label{tamed.fam} Given $\delta>0$ and $\Lambda<\infty$, there exists $C(M,\Lambda)<\infty$ such that the following holds. If $\epsilon<\delta$, for any family $F\in \mathcal{C}_2\subset C^0(\bar D^2,\widehat{X})$ (or $F\in \mathcal{C}_{\lambda}\subset C^0([0,1],\widehat{X})$ for $\lambda\in H_{n-1}(M;\mathbb{Z})$) satisfying
\begin{equation}\label{e.max.bd}
\max_y E_{\epsilon}(F_y)<\Lambda,
\end{equation}
there exists another family $F'=(u',\nabla')\in \mathcal{C}_2$ (resp.\ $\mathcal{C}_{\lambda}$) of smooth couples such that
$$\max_y E_{\epsilon}(F'_y)<\Lambda$$
and
$$\max_{y,\,r\geq \delta,\,p\in M}\frac{\int_{B_r(p)}e_{\epsilon}(u_y',\nabla_y')}{r^{n-2}}\leq C(M,\Lambda).$$
\end{lemma}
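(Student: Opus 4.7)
The plan is to obtain $F'$ by running the $L^2$ gradient flow of $E_\epsilon$ on each pair $F_y = (u_y,\nabla_y)$ for a short time $t = t(\delta)>0$, chosen proportional to $\delta^2$, and then invoking the Huisken-type monotonicity formula mentioned at the end of the introduction. Concretely, I would let $(u_y(s),\nabla_y(s))$ denote the gradient flow starting at $(u_y,\nabla_y)$, and set $F'_y := (u_y(t),\nabla_y(t))$. By the long-time existence, uniqueness, continuous dependence, and smoothing results for the flow established at the end of the paper, this yields a continuous family of smooth couples, and the energy is non-increasing along the flow, so $E_\epsilon(F'_y)\le E_\epsilon(F_y)<\Lambda$ for all $y$.

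Next I would verify that $F'$ inherits the boundary/endpoint conditions of the family. For $F\in \mathcal{C}_2$, points $y\in\partial D^2$ have $(u_y,\nabla_y)=(y,d)$ with $|y|=1$, $\nabla u_y=0$ and $F_{\nabla_y}=0$; this is a zero-energy configuration and hence a fixed point of the flow, so $F'_y=F_y$ on $\partial D^2$ and $F'\in\mathcal{C}_2$. For $F\in\mathcal{C}_\lambda$, the endpoints $(1,d)$ and $(\phi,d-i\phi^*d\theta)$ are likewise zero-energy (flat unit sections), so they are stationary under the flow and $F'\in\mathcal{C}_\lambda$.

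The heart of the proof is the density bound, which I would obtain from the Huisken-type monotonicity formula for $E_\epsilon$. The expected statement is that, along the flow, a suitable weighted energy of $(u_s,\nabla_s)$ against a backward heat kernel $\rho_{(x_0,t_0)}$ (scaled for codimension two) is non-increasing modulo controllable error terms. By the standard calibration argument, this translates at time $t$ into a scale-invariant density bound of the form
\begin{equation*}
\int_{B_r(p)} e_\epsilon(u_y(t),\nabla_y(t))\,\dvol_g \le C(M)\bigl(E_\epsilon(u_y,\nabla_y)+1\bigr)\, r^{n-2}
\end{equation*}
valid for all $r\ge c_0\sqrt{t}$ and all $p\in M$, as long as $\epsilon\le r$. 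Choosing $t := (\delta/c_0)^2$ so that $c_0\sqrt{t}=\delta$ (and recalling $\epsilon<\delta$ is part of the hypothesis) gives the desired bound with $C=C(M,\Lambda)$ for every scale $r\ge\delta$, simultaneously across the family.

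The main obstacle will be Step 4: extracting the exact monotonicity statement we need from the flow equations. The self-dual structure of $E_\epsilon$ aligns the three terms of the energy density in the way that the codimension-two calibration requires, so the scheme should parallel Huisken's classical computation, with the backward heat kernel driving the weighted monotonicity modulo terms controlled by the self-dual identity; one must be careful to track boundary contributions at scale $\epsilon$ (where $e_\epsilon$ concentrates pointwise) and verify the monotonicity holds uniformly in $\epsilon$ provided $r\ge\epsilon$. Modulo that ingredient, the rest of the argument is soft: preservation of the homotopy class follows from continuous dependence on initial data, and the required smoothness of $F'_y$ is automatic from parabolic regularity for $t>0$.
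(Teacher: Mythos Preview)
Your overall strategy---run the $L^2$ gradient flow of $E_\epsilon$ and appeal to a Huisken-type monotonicity---is exactly the paper's approach, and the verification that the boundary/endpoint pairs are fixed points of the flow is correct. However, the specific implementation you propose has a real gap: the choice of flow time $t\sim\delta^2$ will not yield the codimension-two density bound.

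The issue is the discrepancy function $\xi_t:=\epsilon|d\alpha_t|-\frac{1-|u_t|^2}{2\epsilon}$. The Huisken monotonicity for this flow only produces the correct $(n-2)$-scaling once one knows $\langle P_t,g\rangle\le (1+o(1))e_t+C\sqrt{e_t}$, and this in turn requires a pointwise bound on $\xi_t$. The paper obtains that bound by observing that $\xi_t$ is dominated (modulo lower-order terms) by the heat evolution $\varphi_t$ of $|\xi_0|$, and $\varphi_t$ becomes uniformly bounded by $C\|\xi_0\|_{L^1}\le C\Lambda$ only for $t\ge 1$. If you run the flow for time $t\sim\delta^2$ with $\delta$ small, you have no control on $\xi_t$, and the naive bound $\langle P_t,g\rangle\le 2e_t$ feeds back into the monotonicity to give only $\int_{B_r}e_\epsilon\le Cr^{n-4}$ (the paper remarks on exactly this failure after Proposition~\ref{grad.dens.bd}). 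So the ``scale $r\ge c_0\sqrt{t}$'' heuristic, while natural from parabolic scaling, is not what drives the argument here.

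The fix is simple but non-obvious from scaling considerations: run the flow for a \emph{fixed} time, say $t=2$, independent of $\delta$. The discrepancy is then controlled for $t\ge 1$, and the monotonicity on $[T-1,T)$ with $T\in[2,3]$ yields the density bound $\int_{B_r(p)}e_\epsilon(u_2,\nabla_2)\le C(M,\Lambda)r^{n-2}$ for \emph{all} $r\in[\epsilon,1]$---in particular for $r\ge\delta$ since $\epsilon<\delta$. (A minor additional point: the paper first mollifies the family so that the initial data are smooth, since the flow's well-posedness is stated for smooth couples.)
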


\begin{proof} First, given a family $F\in \mathcal{C}_2$ or $F\in \mathcal{C}_{\lambda}$ satisfying \eqref{e.max.bd}, we can apply a uniform mollification to obtain a new family $\widetilde{F}$ also satisfying \eqref{e.max.bd} that defines a continuous map into the space of smooth pairs $(u_y,\nabla_y)$, equipped with the $C^{\infty}$ topology. Thus, we may assume without loss of generality that the original family $F$ defines a continuous map into the space of smooth pairs.

In Section \ref{grad.sec} below, we investigate a natural $L^2$ gradient flow system for the energies $E_{\epsilon}$, given by a flow of pairs $(u_t,\nabla_t=d-i\alpha_t)$ satisfying
\begin{equation}\label{gflow.1}
\partial_tu_t=-\nabla_t^*\nabla_t+\frac{1}{2\epsilon^2}(1-|u_t|^2)u_t
\end{equation}
and
\begin{equation}\label{gflow.2}
\partial_t\alpha_t=-d^*d\alpha_t+\epsilon^{-2}\langle iu_t,\nabla_tu_t\rangle.
\end{equation}
As discussed in Section \ref{grad.sec}, it is not difficult to establish long-time existence for the flow, and continuous dependence on smooth initial data. Moreover, it is obvious that minimizers of $E_{\epsilon}$ are stationary under the flow; as a consequence, given a family $y\mapsto F_y=(u_y,\nabla_y)$ in $\mathcal{C}_2$ (resp.\ $\mathcal{C}_{\lambda}$) mapping continuously into the space of smooth pairs as above, we may define a new family $F'\in \mathcal{C}_2$ (resp.\ $\mathcal{C}_{\lambda}$) by letting $F_y'=(u_y',\nabla_y')$ be the solution of \eqref{gflow.1}--\eqref{gflow.2} at time $t=2$ with initial data $(u_y,\nabla_y)=F_y$. Since the gradient flow decreases energy, it is obvious that
$$\max_yE_{\epsilon}(F_y')\leq \max_y E_{\epsilon}(F_y)<\Lambda.$$
Finally, by Proposition \ref{grad.dens.bd} below (the main result of Section \ref{grad.sec}), we have the density estimate
$$\int_{B_r(p)}e_{\epsilon}(u_y',\nabla_y')\leq C(M,\Lambda)r^{n-2}$$
for all $r\geq \epsilon$, so that the family $F'$ satisfies the desired properties.
\end{proof}

\begin{remark}\label{trunc}
Note, moreover, that we may always deform an initial family $(u_y,\nabla_y)$ to one $(v_y,\nabla_y)$ with $|v_y|\leq 1$ pointwise, without increasing the energy, by setting $v_y:=\frac{u_y}{\max\{1,|u_y|\}}$. In particular, for the purposes of estimating the min-max energies, we may always assume that our families $(u_y,\nabla_y)$ satisfy $|u_y|\leq 1$ pointwise, without loss of generality.
\end{remark}

To prove Theorem \ref{min-max.comp}, we will use this lemma in concert with the following technical lemma, which follows in a straightforward way from the results of Section 3.

\begin{lemma}\label{lbd.v2} Given $\Lambda, C_0\in (0,\infty)$, for any $\delta>0$ there exists $\epsilon_0(M,\Lambda,\delta,C_0)$ such that, if $\epsilon\in (0,\epsilon_0)$ and $(u,\nabla)$ is a smooth pair satisfying $|u|\leq 1$,
$$E_{\epsilon}(u,\nabla)\leq \Lambda,$$
and
$$\max_{r\geq \delta,\,p\in M} r^{2-n}\int_{B_r(p)}e_{\epsilon}(u,\nabla)\leq C_0,$$
then there exist a smooth $\phi:M\to S^1$ and a unit section $v\in \mathcal{U}_p(L)$ (i.e., $v\in W^{1,p}(M,S^1)$) for all $p \in (1, \frac{n}{n-1})$, satisfying
\begin{equation}\label{lem.con.1}\|u-v\|_{L^1(M)}\leq \delta,
\end{equation}
\begin{equation}\label{lem.con.2}
\|d(\phi^{-1}v)\|_{L^p(M)}\leq C(p,M,\Lambda),
\end{equation}
\begin{equation}\label{lem.con.3}
\mathbb{M}(J(v))\leq \Lambda,
\end{equation} 
and
\begin{equation}\label{lem.con.4}
\|J(v)\|(B_r(p))\leq 2C_0 r^{n-2}
\end{equation}
for all $p\in M$ and $r\geq \delta.$ Moreover, the map $\phi$ is chosen such that
$$\|\phi^*(d\theta)-\Pi(\alpha)\|_{L^2(M)}\leq C(M),$$
where $\nabla=d-i\alpha$ and $\Pi(\alpha)$ is the closed component of the Hodge decomposition of $\alpha$.
\end{lemma}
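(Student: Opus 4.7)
The plan is to prove Lemma \ref{lbd.v2} by contradiction, combining the gauge-fixing of Lemma \ref{w1p.est} with the compactness argument underlying Theorem \ref{GammaConvThm}(i). Suppose the claim fails; then there exist $\epsilon_j\to 0$ and smooth couples $(u_j,\nabla_j=d-i\alpha_j)$ satisfying the three hypotheses but for which no admissible $(v_j,\phi_j)$ exists.

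For each $j$ I apply Lemma \ref{w1p.est} to produce a smooth gauge $\phi_j=e^{i\psi_j}f_j:M\to S^1$ (with $f_j$ harmonic) and a gauge-equivalent pair $(u_j',\nabla_j'=d-i\alpha_j')$ satisfying $\|du_j'\|_{L^p}+\|\alpha_j'\|_{L^p}\leq C(p,M,\Lambda)$ for every $p\in(1,\frac{n}{n-1})$. The construction of that lemma picks $f_j$ with $\|f_j^*(d\theta)-h(\alpha_j)\|_{L^\infty}\leq C(M)$; since $\phi_j^*(d\theta)=d\psi_j+f_j^*(d\theta)$ and $\Pi(\alpha_j)=d\psi_j+h(\alpha_j)$ in the Hodge decomposition of $\alpha_j$, the difference $\phi_j^*(d\theta)-\Pi(\alpha_j)=f_j^*(d\theta)-h(\alpha_j)$ is automatically bounded in $L^\infty\subset L^2$ by $C(M)$, giving the last assertion of the lemma for free.

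As in the proof of Theorem \ref{GammaConvThm}(i), the uniform $W^{1,p}$ bound together with $|u_j'|\leq 1$ yields, up to a subsequence, strong convergence $u_j'\to w$ in every $L^q(M,\mathbb{C})$, with $w$ attaining the weak $W^{1,p}$ limit; the potential term $\int_M(1-|u_j'|^2)^2\le\epsilon_j^2\Lambda\to 0$ forces $|w|\equiv 1$ a.e., so $w\in\mathcal{U}_p(L)$ and $\|dw\|_{L^p}\leq\liminf\|du_j'\|_{L^p}\leq C(p,M,\Lambda)$. The $\beta$-continuity estimate from Section \ref{SecLimInf} then gives $J(u_j',\nabla_j')\to J(w)$ in $(C^1)^*$, and by gauge invariance of $J$ on pairs we equivalently have $J(u_j,\nabla_j)\to J(w)$. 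I set $v_j:=\phi_j w\in\mathcal{U}_p(L)$, so that $\phi_j^{-1}v_j=w$ and $J(v_j)=J(w)$, again by gauge invariance.

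It remains to verify the four estimates and reach a contradiction for $j$ sufficiently large. Condition \eqref{lem.con.1} reduces to $\|u_j-v_j\|_{L^1}=\|u_j'-w\|_{L^1}\to 0$; \eqref{lem.con.2} is the $W^{1,p}$ bound on $w$ just observed; \eqref{lem.con.3} is the liminf inequality already proved in Theorem \ref{GammaConvThm}(i), using $\mathbb{M}(J(u_j,\nabla_j))\leq E_{\epsilon_j}(u_j,\nabla_j)\leq\Lambda$. For \eqref{lem.con.4}, the pointwise bound $|J|\leq e_\epsilon$ from \eqref{JboundedbyE} together with the density hypothesis give $\|J(u_j,\nabla_j)\|(B_r(p))\leq C_0 r^{n-2}$ for all $r\geq\delta$, and the standard lower semicontinuity of mass on open sets under $(C^1)^*$-convergence produces $\|J(v_j)\|(B_r(p))=\|J(w)\|(B_r(p))\leq C_0 r^{n-2}$, well within the required factor of $2$. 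No substantially new estimate is needed: the lemma is essentially a quantitative packaging of the compactness already carried out for Theorem \ref{GammaConvThm}(i), and the factor $2$ in \eqref{lem.con.4} is merely wiggle room we do not use.
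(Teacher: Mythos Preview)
Your proof is correct and follows essentially the same approach as the paper: argue by contradiction, apply the gauge-fixing of Lemma \ref{w1p.est} to extract uniformly $W^{1,p}$-bounded sections $\phi_j^{-1}u_j$, pass to a subsequential limit $w\in\mathcal{U}_p(L)$ as in the proof of Theorem \ref{GammaConvThm}(i), and set $v_j=\phi_j w$; the four conclusions then follow from $L^1$ convergence, the $W^{1,p}$ bound on $w$, and lower semicontinuity of mass (globally and on balls). One small cosmetic remark: lower semicontinuity of $\|\cdot\|(B_r(p))$ is not a consequence of $(C^1)^*$-convergence alone, but of weak-* convergence of measures, which you have here because the $J(u_j,\nabla_j)$ are uniformly bounded in $L^1$ by $\Lambda$; the paper makes the same tacit upgrade.
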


\begin{proof} The proof follows a straightforward argument by contradiction, using the analysis of Section \ref{SecLimInf}. If the statement were false, then we could find some fixed $\delta>0$, a sequence $\epsilon_j\to 0$, and pairs $(u_j,\nabla_j=d-i\alpha_j)$ such that
\begin{equation}\label{glob.bd}
E_{\epsilon_j}(u_j,\nabla_j)\leq \Lambda,
\end{equation}
and
\begin{equation}\label{loc.bd}
\max_{r\geq \delta,\,p\in M} r^{2-n}\int_{B_r(p)}e_{\epsilon_j}(u_j,\nabla_j)\leq C_0,
\end{equation}
for which there are no $\phi_j:M\to S^1$ and $v_j\in \mathcal{U}_p(L)$ satisfying \eqref{lem.con.1}--\eqref{lem.con.4}. By Lemma \ref{w1p.est} (and its proof), we can find maps $\phi_j: M\to S^1$ such that
$$\|d(\phi_j^{-1}u_j)\|_{L^p(M)}\leq C(p,M,\Lambda) \quad \text{and} \quad \|\alpha_j-\phi_j^*(d\theta)\|_{L^p(M)}\leq C(p,M,\Lambda)$$
for every $p\in (1,\frac{n}{n-1})$, while
$$\|\phi_j^*(d\theta)-\Pi(\alpha_j)\|_{L^2(M)}\leq C(M).$$
In particular, the maps $\phi_j^{-1}u_j$ are uniformly bounded in $W^{1,p}$ for $p\in (1,\frac{n}{n-1})$, and---as discussed in the proof of Theorem 1.2(i)---a subsequence therefore converges strongly in $L^1$ and weakly in $W^{1,p}$ to a singular unit section $v\in \mathcal{U}_p(L)$ (i.e., $v\in W^{1,p}(M,S^1)$, since $L$ is now trivial), while the gauge-invariant $(n-2)$-currents $J(u_j,\nabla_j)$ converge weakly to $J(v)$. Moreover, by \eqref{glob.bd}, \eqref{loc.bd}, and the lower semicontinuity of mass under weak convergence, we see that 
$$\mathbb{M}(J(v))\leq \liminf_{j\to\infty} \mathbb{M}(J(u_j,\nabla_j))\leq E_{\epsilon_j}(u_j,\nabla_j)\leq \Lambda$$
and
$$\|J(v)\|(B_r(p))\leq \liminf_{j\to\infty}\|J(u_j,\nabla_j)\|(B_r(p))\leq \liminf_{j\to\infty}\int_{B_r(p)}e_{\epsilon_j}(u_j,\nabla_j)\leq C_0r^{n-2}$$
for all $r\geq\delta$ and $p\in M$. In particular, for $j$ sufficiently large, we see that $\phi_j$ and $\phi_j v$ satisfy \eqref{lem.con.1}--\eqref{lem.con.4} (in place of $\phi$ and $v$) with respect to $u_j$, giving the desired contradiction.
\end{proof}

\begin{remark} In particular, recall from Corollary \ref{same.class} that for any $v\in \mathcal{U}_p(L)$ with $\mathbb{M}(J(v))<\infty$, we have $J(v)=2\pi\Gamma$ for an integral $(n-2)$-cycle $\Gamma\in \mathcal{Z}_{n-2}(M;\mathbb{Z})$.
\end{remark}

\subsection{Filling in cycles by filling maps}
The results of the preceding subsection will allow us to relate min-max families $F\in \mathcal{C}_2$ or $F\in \mathcal{C}_{\lambda}$ for the energies $E_{\epsilon}$ to certain discrete families of $(n-2)$-cycles with the desired mass bounds. In what follows, we collect some technical lemmas which will allow us to identify the images of those families of $(n-2)$-cycles under the Almgren isomorphism.

\begin{lemma}\label{interp.lem} 
Given $u,v\in W^{1,p}(M,S^1)$, for $p \in (1,2)$, there exists $w \in W^{1,p}(M\times [0,1],S^1)$ satisfying the boundary condition 
$$w(x,0)=u(x,0), \quad \text{and} \quad w(x,1)=v(x,1),$$
in the trace sense, for which the estimate 
$$\|\partial_tw\|_{L^p(M\times [0,1])}\leq C(p)\|u-v\|_{L^p(M)}$$
holds, and such that the pushforward $\pi_*[J(w)]$ of the distributional Jacobian $J(w)$ under the projection $\pi:M\times [0,1]\to M$ satisfies
$$\mathbb{M}(\pi_*[J(w)])\leq C\int_M |u-v|(|du|+|dv|).$$
\end{lemma}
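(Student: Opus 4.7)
The plan is to construct $w$ via the \emph{short-arc interpolation}
\begin{equation*}
    w(x,t) := e^{it\phi(x)}\, u(x),
\end{equation*}
where $\phi:M\to(-\pi,\pi]$ is the measurable function uniquely determined a.e.\ by the relation $v = e^{i\phi}u$. The boundary conditions $w(\cdot,0)=u$ and $w(\cdot,1)=v$ then hold by construction, and $|\partial_t w| = |\phi|$. The elementary inequality $|u-v| = 2|\sin(\phi/2)| \ge \tfrac{2}{\pi}|\phi|$ on $[-\pi,\pi]$ gives the pointwise bound $|\partial_t w| \le \tfrac{\pi}{2}|u-v|$, from which the $L^p$ estimate follows immediately.

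For the Jacobian estimate, I would decompose the 1-form $j(w):=\langle dw, iw\rangle$ on $M \times [0,1]$ with respect to the product structure, writing $j(w) = j_M + j_t\,dt$ where $j_M(x,t) := \langle d_M w, iw\rangle$ is a $t$-dependent 1-form on $M$ and $j_t(x,t) := \langle \partial_t w, iw\rangle$ is a scalar function. A direct computation gives
\begin{equation*}
    J(w) = d_M j_M + dt \wedge (\partial_t j_M - d_M j_t),
\end{equation*}
and the terms involving $d_M\partial_t w$ cancel, leaving $\partial_t j_M - d_M j_t = 2\langle d_M w,\, i\partial_t w\rangle$. Fiber-integrating in $t$ then identifies $\pi_*[J(w)]$ with the 1-form $\int_0^1 2\langle d_M w, i\partial_t w\rangle(\cdot, t)\,dt$ on $M$, whose mass as an $(n-1)$-current is controlled by $2\int_M\!\int_0^1 |d_M w|\,|\partial_t w|\,dt\,dV_g$.

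Differentiating the definition of $w$ gives $|d_M w| \le |du| + t|d\phi|$, and differentiating the relation $v = e^{i\phi}u$ yields $|d\phi| \le |du| + |dv|$ wherever $\phi$ lies in the open interval $(-\pi,\pi)$. Combined with $|\partial_t w| \le \tfrac{\pi}{2}|u-v|$ these give the pointwise bound $|d_M w|\,|\partial_t w| \le C\,|u-v|(|du|+|dv|)$, and integrating over $M \times [0,1]$ produces the desired mass estimate on $\pi_*[J(w)]$.

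The main obstacle is verifying that $w$ really belongs to $W^{1,p}(M\times[0,1], S^1)$: the selection $\phi \in (-\pi,\pi]$ has jumps of $\pm 2\pi$ across the set $\{v = -u\}$, and $w$ jumps correspondingly by the factor $e^{-2\pi it}$, which is non-trivial for $t\in(0,1)$. I would handle this by first approximating $u, v$ by smooth maps so that $\{v=-u\}$ becomes a codimension-two set (in which case the corresponding $w$ lies in $W^{1,p}$ for $p<2$ by a standard capacity argument), establishing the estimates uniformly in the approximation, and passing to the limit; alternatively, one can modify the short-arc interpolation in a thin tubular neighborhood of $\{v=-u\}$ to resolve the ambiguity in the choice of arc, which perturbs both estimates only by lower-order terms absorbed into the constant $C$.
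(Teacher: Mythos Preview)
Your short-arc interpolation is a natural first idea, but the Jacobian estimate breaks down completely, and the defect is not just the regularity issue you flag at the end. With $w=e^{it\phi}u$ you have $\partial_t w = i\phi\,w$, so $i\partial_t w=-\phi\,w$ is a \emph{real} scalar multiple of $w$. Hence
\[
2\langle d_M w,\,i\partial_t w\rangle \;=\; -2\phi\,\langle d_M w,\,w\rangle \;=\; -\phi\,d_M|w|^2 \;=\; 0
\]
pointwise a.e., since $|w|\equiv 1$. Your fiber-integration formula therefore yields $\pi_*[J(w)]=0$, which is absurd whenever $J(u)\neq J(v)$ (recall $\partial\bigl(\pi_*[J(w)]\bigr)=J(v)-J(u)$). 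The point is that $J(w)=d\langle dw,iw\rangle$ is a distributional object, and for your $w$ it is entirely supported on the codimension-one jump set $\{v=-u\}\times(0,1)$, where $\phi$ switches between $\pi$ and $-\pi$. Your pointwise bounds $|d_M w|\le |du|+t|d\phi|$ and $|\partial_t w|\le C|u-v|$ see only the absolutely continuous part of $dw$, and miss exactly the singular contribution that carries all of the mass of $\pi_*[J(w)]$. This also explains why $w$ fails to lie in $W^{1,p}$: the jump set is codimension one in $M\times[0,1]$ (generically $\{v\bar u=-1\}$ is a hypersurface in $M$, not a codimension-two set), so no capacity argument for $p<2$ can repair the construction as written.

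The paper's proof sidesteps all of this via the Hardt--Kinderlehrer--Lin trick. One mollifies $u,v$ to smooth disk-valued maps $u_\delta,v_\delta$, takes the \emph{linear} interpolation $w_\delta=(1-t)u_\delta+tv_\delta$ into $D^2$, and then retracts to $S^1$ by $\Phi_y\circ w_\delta$ for a point $y\in D_{1/4}$ chosen by averaging: the coarea formula gives $\int_{D_{1/4}}\mathbb{M}(\pi_*[w_\delta^{-1}\{y\}])\,dy\le C\int_M|u_\delta-v_\delta|(|du_\delta|+|dv_\delta|)$, while Fubini controls $\|d(\Phi_y\circ w_\delta)\|_{L^p}$ and $\|\partial_t(\Phi_y\circ w_\delta)\|_{L^p}$ on average in $y$. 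Picking a good $y=y_\delta$ and passing to a weak limit as $\delta\to 0$ gives the desired $w$. The averaging over $y$ is what absorbs the singularities and makes the estimates genuine; it plays the role that your ``modify near $\{v=-u\}$'' would have to play, but in a way that actually produces the bound $\mathbb{M}(\pi_*[J(w)])\le C\int_M|u-v|(|du|+|dv|)$.
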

\begin{proof} 
The proof combines ideas from \cite[Section 3]{CanevariOrlandi} and \cite{HardtKinderlehrerLin}. First, we mollify $u$ and $v$ to obtain maps $u_{\delta},v_{\delta}\in C^{\infty}(M,D^2)$ with
$$\|u_{\delta}-u\|_{W^{1,p}(M)}+\|v_{\delta}-v\|_{W^{1,p}(M)}<\delta.$$
Let $w_{\delta}: M\times [0,1]\to D^2$ be the linear interpolation
$$w_{\delta}(x,t):=(1-t)u_{\delta}(x)+tv_{\delta}(x).$$
Consider then the $(n-1)$-currents
$$\Gamma_y^{\delta}:=\pi_*[w_{\delta}^{-1}\{y\}]$$
given by pushing forward the $(n-1)$-dimensional submanifold $w_{\delta}^{-1}\{y\}$ for every regular value $y\in D$. Then for any $\zeta\in \Omega^{n-1}(M)$, the coarea formula gives
\begin{align*}
\langle \Gamma_y^{\delta},\zeta\rangle
&=\int_{w_{\delta}^{-1}\{y\}}\pi^*(\zeta)\\
&= \int_{w_{\delta}^{-1}\{y\}}*\Big(\zeta \wedge \frac{J(w_{\delta})}{|J(w_{\delta})|}\Big)\,d\mathcal{H}^{n-1}\\
&= \int_{w_{\delta}^{-1}\{y\}}*(\zeta \wedge dt\wedge \iota_{\partial_t}J(w_{\delta}))|J(w_{\delta})|^{-1}\,d\mathcal{H}^{n-1}.
\end{align*}
In particular, since
$$|\iota_{\partial_t}J(w_{\delta})|\leq 2|\partial_t w_{\delta}||dt\wedge dw_{\delta}|\leq 2|u_{\delta}-v_{\delta}|(|du_{\delta}|+|dv_{\delta}|),$$
it follows that
$$\mathbb{M}(\Gamma_y^{\delta})\leq \int_{w_{\delta}^{-1}\{y\}}\frac{|u_{\delta}-v_{\delta}|(|du_{\delta}|+|dv_{\delta}|)}{\frac{1}{2}|J(w_{\delta})|} \, d\mathcal{H}^{n-1},$$
and applying the coarea formula for $w_{\delta}$, we arrive at
\begin{equation}\label{gamma.mbd}
\int_{D}\mathbb{M}(\Gamma_y^{\delta})\leq \int_M |u_{\delta}-v_{\delta}|(|du_{\delta}|+|dv_{\delta}|).
\end{equation} 
Now, for each $y\in D_{1/4}$, fix a map $\Phi_y\in C^{\infty}(D_1\setminus \{y\}, S^1)$ satisfying
\begin{equation}
\Phi_y(z) = 
\begin{cases}
\frac{z-y}{|z-y|} & \text{for $z \in D_{1/4}(y)\subset D_{1/2},$} \\
\frac{z}{|z|} & \text{for $\vert z \vert \geq 3/4,$}
\end{cases}
\end{equation}
and
$$|d\Phi_y(z)|\leq \frac{C}{|z-y|}\text{ on }D_1$$
for some fixed constant $C$. Then, writing
$$w_{\delta,y}:=\Phi_y\circ w_{\delta},$$
if $y\in D_{1/4}$ is a regular value of $w_{\delta}$, we see that $w_{\delta,y}$ belongs to $W^{1,p}(M\times[0,1],S^1)$ and satisfies $J(w_{\delta,y})=2\pi w_{\delta}^{-1}\{y\}$, as well as
$$\|dw_{\delta,y}\|_{L^p(M\times[0,1])}^p\leq C\int_{M\times[0,1]} |dw_{\delta}|(x,t)^p|w_{\delta}(x,t)-y|^{-p} \, dx\,dt$$
and
$$\|\partial_tw_{\delta,y}\|_{L^p(M\times[0,1])}^p\leq C\int_{M\times[0,1]} |u_{\delta}-v_{\delta}|^p(x)|w_{\delta}(x,t)-y|^{-p} \, dx\,dt.$$

Integrating the latter two estimates over $y\in D_{1/4}$ and applying Fubini's theorem, we see that
\begin{align*}
\int_{D_{1/4}}\|dw_{\delta,y}\|_{L^p(M\times[0,1])}^p\,dy
&\leq \int_{M\times[0,1]}|dw_{\delta}(x,t)|^p\Big( \int_{D_{1/4}}|w_{\delta}(x,t)-y|^{-p}\,dy\Big)\,dx\,dt\\
&\leq C(p)\|dw_{\delta}\|_{L^p(M\times[0,1])}^p,
\end{align*}
and similarly
$$\int_{D_{1/4}}\|\partial_tw_{\delta,y}\|_{L^p(M\times[0,1])}^p\leq C(p)\|u_{\delta}-v_{\delta}\|_{L^p(M)}^p.$$
Combining these estimates together with \eqref{gamma.mbd}, we can find $y=y_{\delta}\in D_{1/4}$ such that
$$\|dw_{\delta,y}\|_{L^p(M\times[0,1])}\leq C(p)\|dw_{\delta}\|_{L^p(M\times[0,1])}$$
and
$$\|\partial_tw_{\delta,y}\|_{L^p(M\times[0,1])}\leq C(p)\|u_{\delta}-v_{\delta}\|_{L^p(M)}^p,$$
together with
$$\mathbb{M}(\pi_*[J(w_{\delta,y})])=2\pi\mathbb{M}(\Gamma_y^{\delta})\leq C\int_M|u_{\delta}-v_{\delta}|(|du_{\delta}|+|dv_{\delta}|).$$
Since $w_{\delta,y_{\delta}}$ is bounded in $W^{1,p}(M\times[0,1], S^1)$, we may take a subsequential limit 
$$w=\lim_{\delta\to 0}w_{\delta,y_{\delta}}$$
as $\delta\to 0$, to obtain a map $w\in W^{1,p}(M\times[0,1],S^1)$ with the desired properties.
\end{proof}

\begin{remark} On a manifold with Lipschitz boundary $(N, \partial N)$ of dimension $m$ (e.g.\ $N=M\times [0,1]$ or $N=M\times [0,1]^2$ where $M$ is our underlying manifold), given a map $w\in W^{1,p}(N,S^1)\cap W^{1,p}(\partial N,S^1)$, recall that the \textit{(interior) distributional Jacobian} $J(w)$ is the $(m-2)$-current given by
\begin{equation} \label{BoundaryJac}
\langle J(w),\zeta\rangle:=\int_N w^*(d\theta)\wedge d\zeta+\int_{\partial N}w^*(d\theta)\wedge \zeta.
\end{equation}
\end{remark}

In the sequel, we endow $M\times[0,1]$ with the orientation such that $M\times\{1\}$ is oriented as $M$.
Using the product orientation on $M\times [0,1]^2$ and the induced one on the boundary $M\times\de[0,1]^2$, note that $\tau\wedge v$ is positively oriented on the latter manifold when $v$ is a positively oriented $n$-vector of $M$ and $\tau$ is tangent to $\de[0,1]^2$, pointing counter-clockwise.

\begin{remark}\label{line.concat} The distributional Jacobian interacts well with concatenation of maps. Indeed, for any two $w_1,w_2\in W^{1,p}(M\times [0,1],S^1)\cap W^{1,p}(M\times \{0,1\},S^1)$, if $w_1*w_2:M\times [0,1]\to S^1$ is the usual concatenation, we have that 
$$\pi_*[J(w_1*w_2)]=\pi_*[J(w_1)]+\pi_*[J(w_2)].$$
Reasoning by induction one can then prove that the above identity holds for an arbitrary finite concatenation.
\end{remark}

\begin{lemma}\label{square.fill} Let $I^2=[0,1]^2$, and let $F\in W^{1,p}(M\times I^2,S^1)\cap W^{1,p}(M\times \partial I^2, S^1)$. Letting $\pi: M\times I^2\to M$ be the canonical projection, the $n$-current
$$\Xi:=\pi_*[J(F)]\in \mathcal{D}_n(M)$$
depends only on $F|_{M\times \partial I^2}$, is given by
$$\langle \Xi, \varphi\,\dvol_g\rangle=2\pi\int_M \varphi(x)\deg \Big( F \vert_{\{ x \}\times \partial I^2} \Big) \, dx,$$
and satisfies
$$\mathbb{M}(\Xi)\leq \|\partial_tF\|_{L^1(M\times \partial I^2)},$$
where $\partial_tF$ denotes the partial derivative of $F$ along the $\partial I^2$ direction.
\end{lemma}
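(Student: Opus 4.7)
The plan is to unpack the defining formula \eqref{BoundaryJac} for the interior distributional Jacobian on $M\times I^2$ and show that only the boundary contribution survives when one tests against forms pulled back from $M$.

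First I would test $\pi_*[J(F)]$ against an arbitrary $n$-form $\eta\in\Omega^n(M)$, writing
\begin{equation*}
\langle \pi_*[J(F)],\eta\rangle = \langle J(F),\pi^*\eta\rangle = \int_{M\times I^2}F^*(d\theta)\wedge d(\pi^*\eta) + \int_{M\times\partial I^2}F^*(d\theta)\wedge \pi^*\eta.
\end{equation*}
The key observation is that $d(\pi^*\eta) = \pi^*(d\eta) = 0$ because $\eta$ is top-degree on $M$, so the bulk integral vanishes identically and the identity collapses to its boundary term. This immediately shows that $\Xi$ depends only on $F|_{M\times\partial I^2}$.

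Next, writing $\eta = \varphi\,\dvol_g$, I would decompose $F^*(d\theta)$ on $M\times\partial I^2$ into its $M$-horizontal part and its $\partial I^2$-vertical part $\langle d\theta,\partial_t F\rangle\,dt$; only the latter contributes against $\pi^*(\dvol_g)$ for dimensional reasons. With the orientation convention from the remark preceding the lemma, $dt\wedge\dvol_g$ equals the volume form on $M\times\partial I^2$, so Fubini reduces the pairing to $\int_M \varphi(x)\bigl(\int_{\partial I^2} F_x^*(d\theta)\bigr)\,dx$, where $F_x:=F(x,\cdot)$. Slicing of $W^{1,p}$ functions (valid because $F$ is assumed $W^{1,p}$ on $M\times\partial I^2$, not merely as a trace of an interior map) together with $W^{1,1}(S^1)\hookrightarrow C^0$ guarantees that $F_x\in C^0(\partial I^2,S^1)$ for a.e.\ $x$ and $\frac{1}{2\pi}\int_{\partial I^2}F_x^*(d\theta)=\deg(F_x)\in\mathbb{Z}$, yielding the stated explicit formula.

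Finally, for the mass bound, I would use the duality identification
\begin{equation*}
\mathbb{M}(\Xi) = \sup_{\|\varphi\|_{C^0}\leq 1}\langle \Xi,\varphi\,\dvol_g\rangle = 2\pi\int_M |\deg(F_x)|\,dx,
\end{equation*}
combined with the pointwise equality $|F_x^*(d\theta)(t)|=|\partial_t F(x,t)|$ (since $d\theta$ has unit norm on $S^1$ and $\partial_t F$ is $S^1$-tangent), then integrate in $x$ via Fubini to conclude $\mathbb{M}(\Xi)\leq\|\partial_t F\|_{L^1(M\times\partial I^2)}$; the factor $2\pi$ coming from the degree formula cancels exactly with the $\frac{1}{2\pi}$ in $|\deg(F_x)|\le\frac{1}{2\pi}\int|\partial_t F|\,dt$. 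I do not foresee a serious obstacle: the argument is essentially careful bookkeeping with \eqref{BoundaryJac} plus slicing, and the only mild subtlety is ensuring that the slice pullbacks are well-defined for a.e.\ $x$, which is exactly where the full $W^{1,p}$ regularity on $M\times\partial I^2$ (rather than merely boundary trace regularity) is used.
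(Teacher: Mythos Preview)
Your proposal is correct and follows essentially the same approach as the paper: the paper's proof simply observes that any $n$-form $\zeta\in\Omega^n(M^n)$ is closed, applies \eqref{BoundaryJac} to reduce $\langle J(F),\pi^*\zeta\rangle$ to the boundary integral $\int_{M\times\partial I^2}F^*(d\theta)\wedge\pi^*\zeta$, and then Fubini to obtain the degree integral, after which it asserts that the remaining claims follow. Your write-up supplies the details (the horizontal/vertical decomposition, the slicing argument for a.e.\ well-definedness of the degree, and the mass bound) that the paper leaves implicit.
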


\begin{proof} Since any $n$-form $\zeta\in \Omega^n(M^n)$ is closed, \eqref{BoundaryJac} implies 
$$\langle J(F),\pi^*\zeta\rangle=\int_{M\times \partial I^2}F^*(d\theta)\wedge\pi^*\zeta =\int_{M}\zeta(x)\Big(\int_{\{x\} \times \partial I^2}F^*(d\theta)\Big) \, dx,$$ from which the desired results follow.
\end{proof}

Hence, if $F_1,F_2\in W^{1,p}(M\times I^2,S^1)\cap W^{1,p}(M\times \partial I^2,S^1)$ are two such maps, satisfying
$$F_1(x,1,t)=F_2(x,0,t),$$
and $\Phi=F_1*F_2$ is the map given by concatenating along one face of the square, i.e.,
\begin{equation}
\Phi(x,s,t) := \begin{cases}
F_1(x,2s,t) & \text{on }M\times [0,1/2]\times I, \\
F_2(x,2s-1,t) & \text{on }M\times [1/2,1]\times I, 
\end{cases}
\end{equation}
we have 
\begin{equation}\label{box.concat}
\pi_*[J(F_1)]+\pi_*[J(F_2)]=\pi_*[J(F_1*F_2)].
\end{equation}
Of course, the same statement holds if we define $F_1*F_2$ by concatenation along any other face of $I^2$. 

\subsection{One-parameter families corresponding to $\bm{\pi_1(\mathcal{Z}_{n-2}(M;\mathbb{Z}),0)}$.}
We come now to the proof of the second inequality in Theorem \ref{min-max.comp}, comparing the one-parameter min-max constructions for the $U(1)$-Higgs energies and the $(n-2)$-mass. That is, for any $\lambda\in H_{n-1}(M;\mathbb{Z})$, our goal in this section is to prove that
\begin{equation}\label{one.par.comp}
\liminf_{\epsilon\to 0}\mathcal{E}_{\epsilon}(\lambda)\geq 2\pi {\bf W}(\lambda).
\end{equation}

To this end, fix $0\neq \lambda\in H_{n-1}(M;\mathbb{Z})$ and a small constant $\delta>0$.
Let $\psi\in C^{\infty}(M,S^1)$ be a map whose (regular) fibers lie in $\lambda\in H_{n-1}(M;\mathbb{Z})$.
Recall that, by definition of $\mathcal{C}_{\lambda}$, the endpoints $(u_0,\nabla_0)$ and $(u_1,\nabla_1)$ of a family $(u_t,\nabla_t)_{t\in[0,1]}$ in $\mathcal{C}_\lambda$ are given by
$$(u_0,\nabla_0)=(1,d) \quad \text{and} \quad (u_1,\nabla_1)=(\psi, d-i\psi^*(d\theta)).$$
We claim that
\begin{equation}\label{one.finite}
	\Lambda:=\liminf_{\epsilon\to 0}\mathcal{E}_{\epsilon}(\lambda)<\infty.
\end{equation}

\begin{proof}[Proof of \eqref{one.finite}] Since the proof is very similar to the one for two-parameter families, given in \cite[Section~7]{PigatiStern}, we just sketch it.
Identifying $M$ with a simplicial complex $\tilde M$ in some Euclidean space $\R^L$, by means of a triangulation of $M$, we can find a piecewise affine map $\tilde\psi:\R^L\to\C$ such that $\tilde\psi=1$ far from $\tilde M$ and $|\tilde\psi-\psi|<\frac{1}{2}$ on $\tilde M$ (provided the triangulation was chosen fine enough). Let $y$ be a small regular value of $\tilde\psi$.

By composing $\psi$ with a piecewise affine homeomorphism of $\C$, we can assume that $y=0$ and that $\tilde\psi^{-1}(\bar D_{1/2})$ is an $O(\epsilon)$-neighborhood of $\tilde\psi^{-1}(0)$, with the bound $|d\tilde\psi|=O(\epsilon^{-1})$.
In particular, the fiber $\tilde\psi^{-1}(0)$ is contained in finitely many affine $(L-2)$-planes $P_j$. With a slight perturbation of $\tilde M$, which does not intersect $\tilde\psi^{-1}(\bar D_{1/2})$, we can assume that all the simplices in $\tilde M$ are transverse to each $P_j$ (when both are translated to the origin).

Now we can apply \cite[Proposition~7.13]{PigatiStern} to (a regularization of)
the maps $\tilde\psi(\cdot-C(1-t))$, with $t\in[0,1]$ and $C$ big enough.
The preimage of $\bar D_{1/2}$ under these maps intersects $\tilde M\cong M$ in an $O(\epsilon)$-neighborhood of $[\tilde\psi^{-1}(0)+C(1-t)]\cap\tilde M$, which has volume $O(\epsilon^2)$.
Also, for $t=0$ the initial map is constant and equal to $1$ (for $C$ big enough).

The aforementioned proposition gives then a family $(u_t,\nabla_t)_{t\in[0,1]}$ with uniformly bounded energy
from $(1,d)$ to $(\bar\psi,d-i\bar\psi^*(d\theta))$, for some $\bar\psi:M\to S^1$ homotopic to $\psi$. Concatenating this family with $(\psi_t,d-i\psi_t^*(d\theta))$, for a homotopy $\psi_t$ from $\bar\psi$ to $\psi$, we get a family in $C_\lambda$ with the same energy, as desired.
\end{proof}

Now, consider a small $\epsilon\in (0,\delta)$ such that
\begin{equation}
\mathcal{E}_{\epsilon}(\lambda)\leq \Lambda+\delta<\Lambda+1.
\end{equation}
By Lemma \ref{tamed.fam} and Remark \ref{trunc}, we can find a family $[0,1]\ni t\mapsto (u_t,\nabla_t=d-i\alpha_t)$ in $\mathcal{C}_{\lambda}\subset C^0([0,1],\widehat{X})$ such that $|u_t|\leq 1$,
\begin{equation}
\max_{t\in [0,1]}E_{\epsilon}(u_t,\nabla_t)\leq \mathcal{E}_{\epsilon}(\lambda)+\epsilon\leq \Lambda+2,
\end{equation}
and
\begin{equation}
\max_{t\in[0,1],\,r\geq \epsilon,\,p\in M}r^{2-n}\int_{B_r(p)}e_{\epsilon}(u_t,\nabla_t)\leq C_0(M,\Lambda).
\end{equation}

Now, by the continuity of the path $t\mapsto (u_t,\nabla_t=d-i\alpha_t)$ in $\widehat{X}$, we may select a finite sequence of times
$$0=t_0<t_1<\cdots<t_{N=3^k}=1$$
such that
$$\|u_{t_{i+1}}-u_{t_i}\|_{W^{1,2}(M)}+\|\alpha_{t_{i+1}}-\alpha_{t_i}\|_{W^{1,2}(M)}<\delta.$$
In what follows, we write $u_i=u_{t_i}$ and $\alpha_i=\alpha_{t_i}$. Suppose now that $\epsilon<\epsilon_0(M,\Lambda+2,\delta,C_0)$ as in Lemma \ref{lbd.v2},
and for each $i=1,\ldots, N=3^k$, let
$$v_i\in W^{1,p}(M,S^1) \quad \text{and} \quad \phi_i:M\to S^1$$
be as in the conclusion of Lemma \ref{lbd.v2}, so that
$$\|u_i-v_i\|_{L^1(M)}\leq \delta,$$
and 
\begin{equation}\label{phiv.sob.bd}
\|d(\phi_i^{-1}v_i)\|_{L^p(M)}\leq C(p,M,\Lambda)
\end{equation}
for $p\in(1,\frac{n}{n-1})$, while
$$\mathbb{M}(J(v_i))\leq \Lambda+2\delta,$$
together with
$$\max_{r\geq \delta,\,p\in M}\frac{\|J(v_i)\|(B_r(p))}{r^{n-2}}\leq 2C_0,$$
and
\begin{equation}\label{phi.char}
\|\phi_i^*(d\theta)-\Pi(\alpha_i)\|_{L^2(M)}\leq C(M).
\end{equation}

In this way, we get a sequence
$$1=v_0,v_1,\ldots,v_N=\psi\text{ in }W^{1,p}(M,S^1)$$
such that 
$$\|v_{i+1}-v_i\|_{L^1(M)}\leq C\delta$$
and the integral $(n-2)$-cycles $T_i:=\frac{1}{2\pi}J(v_i)$ satisfy
$$2\pi\mathbb{M}(T_i)\leq \Lambda+2\delta$$
and
$$\max_{r\geq \delta,\,p\in M}\frac{\|T_i\|(B_r(p))}{r^{n-2}}\leq C_0.$$
Moreover, for each $i=0,\ldots,N-1$, the following holds.

\begin{lemma}\label{fill-in} For $p\in [1,\frac{n}{n-1})$, there exists $w_i\in W^{1,p}(M\times [0,1],S^1)$ with boundary values 
$$w_i(x,0)=v_i(x), \quad w_i(x,1)=v_{i+1}(x),$$ 
satisfying 
$$\|\partial_tw_i\|_{L^p(M\times [0,1])}\leq C(p)\|v_{i+1}-v_i\|_{L^p(M)}\leq C(p) \delta^{1/p}$$
and
$$\mathbb{M}(\pi_*[J(w_i)])\leq C(p,M,\Lambda)\delta^{1-1/p}.$$
\end{lemma}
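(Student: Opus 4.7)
\emph{Proof plan.} The strategy is to apply Lemma \ref{interp.lem}, but not directly to the pair $(v_i, v_{i+1})$: the gradients $dv_j$ are not controlled uniformly in $\epsilon$---only $d(\phi_j^{-1} v_j)$ is, by Lemma \ref{lbd.v2}. I first pass to a common gauge. Setting $\tilde v_j := \phi_j^{-1} v_j$ and $\mu_i := \phi_i^{-1}\phi_{i+1}$, so that $\phi_i^{-1} v_{i+1} = \mu_i \tilde v_{i+1}$, uniform $L^p$ bounds on $d\tilde v_j$ come directly from Lemma \ref{lbd.v2}. For $\mu_i$, writing $\mu_i^*(d\theta) = \phi_{i+1}^*(d\theta) - \phi_i^*(d\theta)$ and combining the bound $\|\phi_j^*(d\theta) - \Pi(\alpha_j)\|_{L^2} \le C(M)$ with $\|\alpha_{i+1} - \alpha_i\|_{L^2} \le \delta$ yields $\|d\mu_i\|_{L^2} \le C(M)$; since $p < 2$, H\"older's inequality gives $\|d\mu_i\|_{L^p} \le C(M,p)$. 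Hence both $\|d\tilde v_i\|_{L^p}$ and $\|d(\mu_i\tilde v_{i+1})\|_{L^p}$ are bounded by $C(p, M, \Lambda)$.

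Next I apply Lemma \ref{interp.lem} to the pair $(\tilde v_i, \mu_i \tilde v_{i+1})$ to produce $\tilde w_i \in W^{1,p}(M \times [0,1], S^1)$ interpolating them, satisfying
\[ \|\partial_t \tilde w_i\|_{L^p} \le C(p)\|\tilde v_i - \mu_i \tilde v_{i+1}\|_{L^p} \quad\text{and}\quad \mathbb{M}(\pi_*[J(\tilde w_i)]) \le C\int_M |\tilde v_i - \mu_i \tilde v_{i+1}|\bigl(|d\tilde v_i| + |d(\mu_i\tilde v_{i+1})|\bigr). \]
Since $\tilde v_i - \mu_i \tilde v_{i+1} = \phi_i^{-1}(v_i - v_{i+1})$ has pointwise modulus $|v_i - v_{i+1}|$, and the triangle inequality combined with $\|v_j - u_j\|_{L^1} \le \delta$, $\|u_i - u_{i+1}\|_{L^2} \le \delta$, and $|v_j|, |u_j| \le 1$ yields via interpolation $\|v_i - v_{i+1}\|_{L^q} \le C\,\delta^{1/q}$ for every $q \in [1,\infty)$, applying H\"older with exponents $p$ and $p'$ to the mass estimate gives
\[ \|\partial_t \tilde w_i\|_{L^p} \le C(p)\,\delta^{1/p} \quad\text{and}\quad \mathbb{M}(\pi_*[J(\tilde w_i)]) \le C(p, M, \Lambda)\,\delta^{1-1/p}. \]

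Finally, I set $w_i := \phi_i \tilde w_i$, identifying $\phi_i$ with its pullback under the projection $M \times [0,1] \to M$. The boundary values are correct: $w_i(\cdot, 0) = \phi_i \tilde v_i = v_i$ and $w_i(\cdot, 1) = \phi_i \mu_i \tilde v_{i+1} = \phi_{i+1}\tilde v_{i+1} = v_{i+1}$. Since $\phi_i$ is $t$-independent and unimodular, $|\partial_t w_i| = |\partial_t \tilde w_i|$, preserving the first bound. Moreover, $w_i^*(d\theta) = \phi_i^*(d\theta) + \tilde w_i^*(d\theta)$ and $d(\phi_i^*(d\theta)) = 0$, so $J(w_i) = J(\tilde w_i)$, yielding the desired mass bound.

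The main obstacle is that applying Lemma \ref{interp.lem} directly to $(v_i, v_{i+1})$ would fail to produce the mass bound, since $\|dv_j\|_{L^p}$ has no $\epsilon$-uniform control: the gauge factor $\phi_j$ is only known up to a $C(M)$ deviation from $\Pi(\alpha_j)$, which itself has no such bound. The gauge change from $v_j$ to $\tilde v_j$ converts this unbounded quantity into the controllable $\mu_i$, exploiting that although the individual $\alpha_j$ may be large in $L^2$, their differences are $O(\delta)$.
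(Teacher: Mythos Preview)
Your proposal is correct and follows essentially the same approach as the paper: both apply Lemma \ref{interp.lem} to the gauge-shifted pair $(\phi_i^{-1}v_i,\phi_i^{-1}v_{i+1})$ (your $(\tilde v_i,\mu_i\tilde v_{i+1})$ is exactly this), control $\|d(\phi_i^{-1}v_{i+1})\|_{L^p}$ via the triangle inequality using $\|\phi_j^*(d\theta)-\Pi(\alpha_j)\|_{L^2}\le C(M)$ and $\|\alpha_{i+1}-\alpha_i\|_{L^2}<\delta$, and then undo the gauge by setting $w_i=\phi_i\tilde w_i$. The only difference is cosmetic---you introduce the notation $\mu_i$ for $\phi_i^{-1}\phi_{i+1}$, while the paper works directly with $\phi_i^{-1}v_{i+1}$.
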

\begin{proof} To begin, apply Lemma \ref{interp.lem} with $u=\phi_i^{-1}v_i$ and $v=\phi_i^{-1}v_{i+1}$, to obtain a map $\tilde{w}\in W^{1,p}(M\times [0,1])$ which restricts to $\phi_i^{-1}v_i$ and $\phi_i^{-1}v_{i+1}$ on $M\times \{0,1\}$, 
and satisfies
$$\|\partial_t\tilde{w}\|_{L^p(M\times [0,1])}\leq C(p)\|\phi_i^{-1}(v_i-v_{i+1})\|_{L^p(M)}=C(p)\|v_{i+1}-v_i\|_{L^p(M)}$$
and
\begin{align*}
\mathbb{M}(\pi_*[J(\tilde{w})]) 
& \leq C\int_M |\phi_i^{-1}(v_i-v_{i+1})|(|d(\phi_i^{-1}v_i)|+|d(\phi_i^{-1}v_{i+1})|)\\
& \leq \|v_i-v_{i+1}\|_{L^{p'}(M)}(\|d(\phi_i^{-1}v_i)\|_{L^p(M)}+\|d(\phi_i^{-1}v_{i+1})\|_{L^p(M)}).
\end{align*}
Now, we know that
$$\|v_i-v_{i+1}\|_{L^{p'}(M)}\leq C(p)\|v_{i+1}-v_i\|_{L^1(M)}^{1-1/p}\leq C(p) \delta^{1-1/p}$$
and
$$\|d(\phi_i^{-1}v_i)\|_{L^p(M)}\leq C(p,M,\Lambda),$$
while
\begin{align*}
\|d(\phi_i^{-1}v_{i+1})\|_{L^p(M)} & = \|v_{i+1}^*(d\theta)-\phi_i^*(d\theta)\|_{L^p(M)}\\
&\leq \|d(\phi_{i+1}^{-1}v_{i+1})\|_{L^p(M)}+\|\phi_{i+1}^*(d\theta)-\phi_i^*(d\theta)\|_{L^p(M)}\\
&\leq C(p,M,\Lambda)+\|\phi_{i+1}^*(d\theta)-\Pi(\alpha_{i+1})\|_{L^p(M)}+\|\phi_i^*(d\theta)-\Pi(\alpha_i)\|_{L^p(M)} \\
& \quad + \|\Pi(\alpha_i-\alpha_{i+1})\|_{L^p(M)}\\
&\leq C(p,M,\Lambda),
\end{align*} 
which together with the preceding estimates gives
$$\mathbb{M}(\pi_*[J(\tilde{w})])\leq C(p,M,\Lambda)\delta^{1-1/p}.$$
Taking $w_i:=\phi_i\tilde{w}$, one sees that $w_i$ satisfies the conclusions of the claim, since $J(w_i)=J(\tilde{w})$ and $\partial_t w_i=\phi_i \partial_t \tilde{w}$.
\end{proof}

In particular, by \eqref{BoundaryJac}, we see that the $(n-1)$-currents $\Gamma_i:=\frac{1}{2\pi}\pi_*[J(w_i)]\in \mathcal{I}_{n-1}(M;\mathbb{Z})$ give fill-ins
$$\partial \Gamma_i=T_{i+1}-T_i$$
of small mass (taking $p=\frac{n+1}{n}$)
$$\mathbb{M}(\Gamma_i)\leq C(M,\Lambda)\delta^{1/(n+1)}.$$
Thus, the sequence $T_0,T_1,\ldots, T_{N=3^k}$ defines a discrete family
$$\beta: I(1,k)_0\to \mathcal{Z}_{n-2}(M;\mathbb{Z})$$
with
$${\bf m}(\beta,r)\leq C_0r^{n-2} \quad \text{for }r\geq \delta,$$
together with
$$\max_i \mathbb{M}(T_i)\leq \frac{1}{2\pi}(\Lambda+2\delta),$$
and
$${\bf f}(\beta)\leq C(M,\Lambda)\delta^{1/(n+1)}.$$
Moreover, for $\delta<\delta_0(M,\Lambda)$ sufficiently small, the homology class $\Psi(\beta)\in H_{n-1}(M;\mathbb{Z})$ associated to $\beta$ by Almgren's isomorphism is given by
$$\Psi(\beta):=[\Gamma],$$
where
$$\Gamma:={\textstyle\sum_{i=0}^{N-1}}\Gamma_i.$$
Now, by Remark \ref{line.concat}, we can identify $\Gamma$ with the projected Jacobian
$$2\pi\Gamma=\pi_*[J(w_0*w_1*\cdots*w_{N-1})]=\pi_*[J(w)]$$
of the concatenated map $w:=w_0*\cdots *w_{N-1}: M\times [0,1]\to S^1$, which satisfies
$$w(x,0)=1 \quad \text{and} \quad w(x,1)=\psi(x).$$
In particular, for any $\zeta\in\Omega^{n-1}(M^n)$, it follows that
$$2\pi\langle \Gamma,\zeta\rangle=\int_{M\times [0,1]}w^*(d\theta)\wedge d\zeta +\int_M\psi^*(d\theta)\wedge \zeta.$$
Hence, the action of $\Gamma$ on \emph{closed} $(n-1)$-forms agrees with that of $\frac{1}{2\pi}\int_M \psi^*(d\theta)\wedge \cdot$. In particular, since there is no torsion in $H_{n-1}(M;\mathbb{Z})$, it follows that 
$$[\Gamma]=[\psi^{-1}\{\theta\}]=\lambda\in H_{n-1}(M;\mathbb{Z}),$$
as desired.

That is, letting $\eta(\delta):=\max\{\delta, C\delta^{1/(n+1)}\}$, we see that $\beta\in \mathcal{A}_{\eta(\delta)}(\lambda)$, so that
$${\bf W}_{\eta(\delta)}(\lambda)\leq \max_i \mathbb{M}(T_i)\leq \frac{1}{2\pi}(\Lambda+2\delta)=\frac{1}{2\pi}\liminf_{\epsilon\to 0}\mathcal{E}_{\epsilon}(\lambda)+\frac{1}{\pi}\delta.$$
Finally, taking the limit as $\delta \to 0$ and using \eqref{W.Wdelta}, we get the desired estimate \eqref{one.par.comp}.

\subsection{Two-parameter families and the generator of $\bm{\pi_2(\mathcal{Z}_{n-2}(M;\mathbb{Z}),0)}$.}

In this subsection, we complete the proof of Theorem \ref{min-max.comp}, establishing the inequality for the two-parameter families
\begin{equation}\label{2.par.comp}
\liminf_{\epsilon\to 0}\mathcal{E}_{\epsilon}(\mathcal{C}_2)\geq 2\pi {\bf W}([M]).
\end{equation}

To begin, set
$$\Lambda:=\liminf_{\epsilon\to 0}\mathcal{E}_{\epsilon}(\mathcal{C}_2),$$
which is finite (see \cite[Section~7]{PigatiStern}),
and fix some small $\delta>0$. Again let $L\to M$ be the trivial line bundle, and consider a two-parameter family
$$\bar D^2\ni y\mapsto (u_y,\nabla_y=d-i\alpha_y)$$
belonging to $\mathcal{C}_2\subset C^0(\bar D^2,\widehat{X})$, so that
$$(u_{\theta},\nabla_{\theta})\equiv (\theta,d) \quad \text{for all }\theta\in \partial D=S^1.$$
Choose a small $\epsilon\in (0,\delta)$ such that
$$\mathcal{E}_{\epsilon}(\mathcal{C}_2)\leq \Lambda+\delta;$$
by Lemma \ref{tamed.fam} and the subsequent remark, we can select our family $\bar D\ni y\mapsto (u_y,\nabla_y)$ in $\mathcal{C}_2$ such that $|u_y|\leq 1$, 
$$\max_{y\in \bar D}E_{\epsilon}(u_y,\nabla_y)\leq \Lambda+2\delta,$$
and
$$\max_{y\in \bar D,\,r\geq \delta,\,p\in M}r^{2-n}\int_{B_r(p)}e_{\epsilon}(u_y,\nabla_y)\leq C_0(M,\Lambda).$$

Now, identifying $\bar D$ with the square $I^2=[0,1]^2$ in the usual bi-Lipschitz way, by the continuity of the family $I^2\cong \bar D\ni y\mapsto (u_y,\nabla_y)\in \widehat{X}$, we can choose $k$ sufficiently large that the discrete assignment
$$I(2,k)_0\ni a \mapsto (u_a,\nabla_a)=(u_a,d-i\alpha_a)\in \widehat{X}$$
satisfies
$$\|u_a-u_b\|_{W^{1,2}(M)}+\|\alpha_a-\alpha_b\|_{W^{1,2}(M)}<\delta$$
for any adjacent vertices $a,b\in I(2,k)_0$. By Lemma \ref{lbd.v2}, for each vertex $a\in I(2,k)_0$, there exist
$$v_a\in W^{1,p}(M,S^1)\quad\text{and}\quad\phi_a:M\to S^1$$
such that
$$\|u_a-v_a\|_{L^1(M)}\leq \delta$$
and
$$\|d(\phi_a^{-1}v_a)\|_{L^p(M)}\leq C(p,M,\Lambda)$$
for $p\in [1,\frac{n}{n-1})$, while
$$\mathbb{M}(J(v_a))\leq \Lambda+2\delta,$$
together with 
$$\max_{r\geq \delta,\,p\in M}\frac{\|J(v_a)\|(B_r(p))}{r^{n-2}}\leq 2C_0,$$
and
$$\|\phi_a^*(d\theta)-\Pi(\alpha_a)\|_{L^2(M)}\leq C(M).$$
The following lemma, and its proof, is identical to Lemma \ref{fill-in}.

\begin{lemma} For each pair of adjacent vertices $a,b\in I(2,k)_0$, there exists $w_{a,b}\in W^{1,p}(M\times [0,1],S^1)$ satisfying the boundary conditions 
$$w_{a,b}(x,0)=v_a(x) \quad \text{and} \quad w_{a,b}(x,1)=v_b(x),$$ 
while for every $p\in [1,\frac{n}{n-1})$,
$$\|\partial_t w_{a,b}\|_{L^p(M\times [0,1])}\leq C(p)\|v_b-v_a\|_{L^p(M)}\leq C(p)\delta^{1/p},$$
and
$$\mathbb{M}(\pi_*[J(w_{a,b})])\leq C(p,M,\Lambda)\delta^{1-1/p}.$$
\end{lemma}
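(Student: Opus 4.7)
The plan is to follow the proof of Lemma \ref{fill-in} essentially verbatim, noting that the adjacency condition on $I(2,k)_0$ delivers exactly the same $W^{1,2}$-closeness of the pairs $(u_a,\alpha_a)$ and $(u_b,\alpha_b)$ used in the one-parameter case. The key idea is to work in a common gauge ($\phi_a$), apply the interpolation from Lemma \ref{interp.lem} to $S^1$-valued maps there, and transport back.

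Concretely, I would apply Lemma \ref{interp.lem} to the pair $(\phi_a^{-1}v_a,\phi_a^{-1}v_b)\in W^{1,p}(M,S^1)^2$, producing $\tilde w\in W^{1,p}(M\times[0,1],S^1)$ with boundary values $\phi_a^{-1}v_a$ at $t=0$ and $\phi_a^{-1}v_b$ at $t=1$, satisfying
\begin{align*}
	\|\partial_t\tilde w\|_{L^p(M\times[0,1])}&\le C(p)\|v_a-v_b\|_{L^p(M)}, \\
	\mathbb{M}(\pi_*[J(\tilde w)])&\le C\int_M|v_a-v_b|\,(|d(\phi_a^{-1}v_a)|+|d(\phi_a^{-1}v_b)|).
\end{align*}
Defining $w_{a,b}:=\phi_a\tilde w$ gives the correct boundary values $v_a,v_b$, and one has $J(w_{a,b})=J(\tilde w)$ (gauge-invariance of $J$ on $S^1$-valued maps) and $|\partial_t w_{a,b}|=|\partial_t\tilde w|$. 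Since $|v_a-v_b|\le 2$ pointwise and $\|v_a-v_b\|_{L^1(M)}\le\|v_a-u_a\|_{L^1}+\|u_a-u_b\|_{L^1}+\|u_b-v_b\|_{L^1}\le C\delta$ by Lemma \ref{lbd.v2} and adjacency, Hölder interpolation yields $\|v_a-v_b\|_{L^p}\le C(p)\delta^{1/p}$ and $\|v_a-v_b\|_{L^{p'}}\le C(p)\delta^{1-1/p}$, which immediately controls $\|\partial_t w_{a,b}\|_{L^p}$.

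The main (and essentially only nontrivial) obstacle is estimating $\|d(\phi_a^{-1}v_b)\|_{L^p(M)}$: Lemma \ref{lbd.v2} only gives a good $L^p$ bound in the native gauge $\phi_b$, not in $\phi_a$. To overcome this, I would write
\begin{equation*}
	d(\phi_a^{-1}v_b)=v_b^*(d\theta)-\phi_a^*(d\theta)=d(\phi_b^{-1}v_b)+\bigl(\phi_b^*(d\theta)-\phi_a^*(d\theta)\bigr),
\end{equation*}
reducing the task to bounding the gauge difference $\phi_b^*(d\theta)-\phi_a^*(d\theta)$. Using the $L^2\hookrightarrow L^p$ embedding (valid since $p<n/(n-1)\le 2$), the boundedness of the Hodge projection $\Pi$ on $L^2$, and the Lemma \ref{lbd.v2} bound $\|\phi_\bullet^*(d\theta)-\Pi(\alpha_\bullet)\|_{L^2}\le C(M)$, one obtains
\begin{equation*}
	\|\phi_b^*(d\theta)-\phi_a^*(d\theta)\|_{L^p}\le\|\phi_b^*(d\theta)-\Pi(\alpha_b)\|_{L^p}+\|\phi_a^*(d\theta)-\Pi(\alpha_a)\|_{L^p}+\|\Pi(\alpha_a-\alpha_b)\|_{L^p}\le C(M),
\end{equation*}
the last term being $O(\delta)$ by adjacency. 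Hence $\|d(\phi_a^{-1}v_b)\|_{L^p}\le C(p,M,\Lambda)$, and combining with the Hölder bound $\|v_a-v_b\|_{L^{p'}}\le C(p)\delta^{1-1/p}$ in the mass integral gives $\mathbb{M}(\pi_*[J(w_{a,b})])\le C(p,M,\Lambda)\delta^{1-1/p}$, as required.
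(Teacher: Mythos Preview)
Your proposal is correct and follows essentially the same argument as the paper: the paper simply states that the proof is identical to that of Lemma \ref{fill-in}, and your write-up reproduces that proof step by step, including the key cross-gauge estimate for $\|d(\phi_a^{-1}v_b)\|_{L^p}$ via $\phi_b^*(d\theta)-\phi_a^*(d\theta)$ and the $\Pi(\alpha_\bullet)$ comparison.
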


\begin{remark} If the vertices $a,b$ lie on the boundary $\partial I^2$, so that $u_a$ and $u_b$ are constant maps to $S^1$, then we take $v_a=u_a$, $v_b=u_b$, and simply let $w_{a,b}$ be the geodesic interpolation in $S^1$ between the two constants.
\end{remark}

In particular, for each pair of adjacent vertices $a,b\in I(2,k)_0$, the $(n-1)$-current 
$$\Gamma_{a,b}:=\frac{1}{2\pi}\pi_*[J(w_{a,b})]\in \mathcal{I}_{n-1}(M;\mathbb{Z})$$
provides a small-mass fill-in
$$\partial \Gamma_{a,b}=T_b-T_a$$
for the difference of the integral $(n-2)$-cycles $T_a:=\frac{1}{2\pi}J(v_a)$; namely, taking $p=\frac{n+1}{n}$ in the preceding lemma, we have
$$\mathbb{M}(\Gamma_{a,b})\leq C(M,\Lambda)\delta^{\frac{1}{n+1}}.$$
Thus, setting $\beta(a):=T_a$ gives a discrete family
$$\beta: I(2,k)_0\to \mathcal{Z}_{n-2}(M;\mathbb{Z})$$
satisfying
$${\bf m}(\beta,r)\leq C_0r^{n-2}\quad\text{for }r\geq \delta,$$
together with
$$\max_{a\in I(2,k)_0}\mathbb{M}(T_a)\leq \frac{1}{2\pi}(\Lambda+2\delta),$$
and
$${\bf f}(\beta)\leq C(M,\Lambda)\delta^{\frac{1}{n+1}}.$$
It remains to show that the homology class $\Psi(\beta)\in H_n(M;\mathbb{Z})$ associated to $\beta$ by Almgren's isomorphism is the fundamental class $[M]$.

For each $2$-cell $\Box\in I(2,k)_2$ with vertices $a,b,c,d$ (ordered counter-clockwise),
let $F:M\times \partial I^2\to S^1$ be the concatenation given by $w_{a,b}$ along the edge $[a,b]$ of $\partial I^2$, $w_{b,c}$ on $[b,c]$, and so on.
We apply Lemma \ref{interp.lem} to interpolate between $F$ and $1$, obtaining an extension $F_{\Box}\in W^{1,p}(M\times I^2,S^1)\cap W^{1,p}(M\times \partial I^2,S^1)$ of the map $F$, so that 
$$\Xi_{\Box}:=\frac{1}{2\pi}\pi_*[J(F_\Box)]\in \mathcal{I}_n(M;\mathbb{Z})$$
has boundary
$$\partial \Xi_{\Box}=\frac{1}{2\pi}\pi_*[J(F)]=\Gamma_{a,b}+\Gamma_{b,c}+\Gamma_{c,d}+\Gamma_{d,a}.$$
In particular, since $\|\partial_tw_{a,b}\|_{L^p(M\times[a,b])}\leq C(p)\delta^{1/p}=C(n)\delta^{n/(n+1)}$, it follows from Lemma \ref{square.fill} that $\Xi_{\Box}$ is the (unique) small-mass fill-in of $\Gamma_{a,b}+\cdots+\Gamma_{d,a}$, provided $\delta<\delta_0(M,\Lambda)$ is sufficiently small. In particular, we see that
$$\Psi(\beta)=[{\textstyle\sum_{\Box \in I(2,k)_2}}\Xi_{\Box}]\in H_n(M;\mathbb{Z}).$$

By concatenating the maps $F_1$ and $F_2$ associated to adjacent boxes $\Box_1, \Box_2$ along the shared edge, we obtain a map $\Phi=F_1*F_2$ which satisfies
$$\pi_*[J(\Phi)]=\Xi_{\Box_1}+\Xi_{\Box_2}.$$
In particular, concatenating all maps along each row of the grid, we obtain a column of maps, which we may again concatenate to obtain finally a map
$$F\in W^{1,p}(M\times I^2,S^1)\cap W^{1,p}(M\times \partial I^2,S^1)$$
for which
$$\pi_*[J(F)]=2\pi{\textstyle\sum_\Box} \Xi_{\Box}.$$
On the other hand, it is clear from the construction that the restriction of $F$ to $M\times \partial I^2$ has the form
$$F(x,t)=h(t)$$
for a fixed homeomorphism $h: \partial I^2\to S^1$. In particular, it follows that
$$\deg \Big( F|_{\{x\} \times \partial I^2}\Big)=1$$
for all $x\in M$, so that
$$2\pi{\textstyle\sum_\Box}\Xi_{\Box}=\pi_*[J(F)]=2\pi [M],$$
by Lemma \ref{square.fill}. 

Thus, $\Psi(\beta)=[M]$, as desired, and again setting $\eta(\delta):=\max\{\delta, C\delta^{1/(n+1)}\}$, we see that $\beta\in \mathcal{A}_{\eta(\delta)}([M])$, and consequently
$${\bf W}_{\eta(\delta)}([M])\leq \max_{a\in I(2,k)_0}\mathbb{M}(T_a)\leq \frac{1}{2\pi}(\Lambda+2\delta)=\frac{1}{2\pi}\liminf_{\epsilon\to 0}\mathcal{E}_{\epsilon}(\mathcal{C}_2)+\frac{1}{\pi}\delta.$$
Taking the limit as $\delta\to 0$ and using \eqref{W.Wdelta}, we then get the desired estimate \eqref{2.par.comp}, completing the proof of Theorem \ref{min-max.comp}.

\section{Huisken-type monotonicity along the gradient flow}\label{grad.sec}

In Lemma \ref{tamed.fam} of the previous section, we made use of the fact that a continuous family of pairs $y\mapsto (u_y,\nabla_y)$ may be deformed to a family $(u_y',\nabla_y')$ with $E_{\epsilon}(u'_y,\nabla'_y)\leq E_{\epsilon}(u_y,\nabla_y)$ satisfying uniform bounds on the $(n-2)$-energy densities $r^{2-n}\int_{B_r(p)}e_{\epsilon}(u_y',\nabla_y')$ in terms of the initial energies $E_{\epsilon}(u_y,\nabla_y)$. We achieve this by showing that the natural $L^2$ gradient flow for these energies satisfies a variant of Huisken's monotonicity formula \cite{Huisk} for the codimension-two mean curvature flow. In addition to its applications above, the result may be of independent interest, in that it provides strong evidence that these gradient flows provide a regularization of the codimension-two Brakke flow---a relationship which we plan to explore further in future work. We also show that this $E_{\epsilon}$-gradient flow satisfies long-time existence and continuous dependence on initial data (the fact that we are working with the abelian gauge group $U(1)$ is of course crucial here).

\subsection{Definition, Bochner identities, and estimates along the gradient flow}
Let $L\to M$ be the trivial line bundle over a closed, oriented Riemannian manifold $(M^n,g)$.
We will assume $n\ge 3$ throughout this section.

We will say that the smooth couples $(u_t,\nabla_t=d-i\alpha_t)_{t\in[0,\infty)}$ solve the gradient flow equations for $E_{\epsilon}$ if they satisfy the coupled nonlinear heat equations
\begin{align} \label{gf}
\left\{
\begin{aligned} 
\partial_t u_t&=-\nabla_t^*\nabla_tu_t+{\textstyle\frac{1}{2\epsilon^2}}(1-|u_t|^2)u_t, \\
\partial_t\alpha_t&=-d^*d\alpha_t+\epsilon^{-2}\langle iu_t,\nabla_tu_t\rangle.
\end{aligned}
\right.
\end{align}
Note that they are formally the gradient flow of $\frac{1}{2}E_\epsilon$ with respect to the $L^2$-scalar product 
\begin{equation*}
\ang{(u,\alpha),(v,\beta)}=\int_M(\ang{u,v}+\epsilon^2\ang{\alpha,\beta}), 
\end{equation*}
where $u$ and $v$ are sections, and $\alpha$ and $\beta$ are one-forms. We defer the proof of long-time existence, uniqueness and continuous dependence on initial data to the end of the section. In what follows, we will also assume that the initial section $u_0\in \Gamma(L)$ satisfies $|u_0|\leq 1$ pointwise.

Assuming the initial data $(u_0,\nabla_0)$ satisfies the energy bound
\begin{equation}
E_{\epsilon}(u_0,\nabla_0)\leq \Lambda, 
\end{equation}
it is easy to see that we have
$$E_{\epsilon}(u_t,\nabla_t)\leq \Lambda$$
for all $t>0$, as the energy is decreasing along the flow. Similar to results for the stationary case in \cite{PigatiStern} (and analogous work of Ilmanen for the parabolic Allen--Cahn equation in codimension one \cite{Ilmanen}), a key ingredient in establishing the desired monotonicity result will be bounding the discrepancy function
\begin{equation}\label{discr}
\xi_t:=\epsilon|d\alpha_t|-\frac{1-|u_t|^2}{2\epsilon}
\end{equation}
along the flow.

As in the stationary case \cite[Section~3]{PigatiStern}, it is straightforward to check that solutions of \eqref{gf} satisfy the following identities: letting 
$$\omega_t:=d\alpha_t$$
and 
$$\psi(u_t,\nabla_t)(e_j,e_k):=2\langle i\nabla_{e_j}u,\nabla_{e_k}u\rangle,$$
we have
\begin{equation}
\epsilon^2(\partial_t+\Delta_H)\omega_t=\psi(u_t,\nabla_t)-|u_t|^2\omega_t,
\end{equation}
from which one obtains the parabolic Bochner identity
\begin{equation}\label{omega.boch}
-\epsilon^2(\partial_t+d^*d)\frac{1}{2}|\omega_t|^2=|u_t|^2|\omega_t|^2+\epsilon^2|D\omega_t|^2-\langle \psi(u_t,\nabla_t),\omega_t\rangle+\epsilon^2\mathcal{R}_2(\omega_t,\omega_t),
\end{equation}
where $\mathcal{R}_2$ denotes the Weitzenb\"ock curvature operator for two-forms. Also,
\begin{equation}\label{u.boch}
-(\partial_t+d^*d)\frac{1}{2}|u_t|^2=|\nabla_t u_t|^2-\frac{1}{2\epsilon^2}(1-|u_t|^2)|u_t|^2.
\end{equation}

\begin{remark}\label{ut.bd} It is an easy consequence of \eqref{u.boch} and the parabolic maximum principle that $|u_t|\leq 1$ for all $t>0$, for initial sections $u_0$ satisfying $|u_0|\leq 1$.
\end{remark}

By a combination of \eqref{omega.boch} and \eqref{u.boch}, similarly to \cite{PigatiStern}, we find that the discrepancy function in \eqref{discr} satisfies the weak differential inequality 
\begin{equation}
-(\partial_t+d^*d+\epsilon^{-2}|u_t|^2)\xi_t\geq-C_0(M)\epsilon|\omega_t|.
\end{equation}
Equivalently, writing
$$\bar{\xi}_t:=e^{-C_0t}\xi_t,$$
we have
\begin{equation}\label{xi.sub}
-(\partial_t+d^*d+\epsilon^{-2}|u_t|^2)\bar{\xi}_t\geq -C_0e^{-C_0t}\frac{1-|u_t|^2}{2\epsilon}
\geq -\frac{C_0}{2\epsilon}(1-|u_t|^2).
\end{equation}

Now, let $K(t,x,y)$ be the heat kernel of $M$, so that
$$(\partial_t+d^*d)K(t,\cdot,y)=0 \quad \text{and} \quad \lim_{t\to 0}K(t,\cdot,y)=\delta_y.$$
Define then 
$$\varphi(t,x):=\int_MK(t,x,y)|\xi_0|(y)\,dy$$
and
$$\psi(t,x):=\int_0^t\int_M K(t-s,x,y)\frac{C_0}{2\epsilon}(1-|u_s|^2)(y)\,dy\,ds.$$
Thus, $\varphi$ is the nonnegative solution of the heat equation
$-(\partial_t+d^*d)\varphi=0 $, with initial condition $\varphi(0,x)=|\xi_0(x)|$. 
By Duhamel's principle, $\psi$ is the nonnegative solution of the inhomogeneous heat equation
\begin{equation*}
-(\partial_t+d^*d)\psi=-\frac{C_0}{2\epsilon}(1-|u_t|^2),
\end{equation*}
with boundary data $\psi(0,x)=0.$ In particular, it follows from \eqref{xi.sub} that
\begin{equation}\label{xi.subeq}
- (\partial_t+d^*d+\epsilon^{-2}|u_t|^{2})(\bar{\xi}_t-\psi_t-\varphi_t)\geq \frac{|u_t|^2}{\epsilon^2}(\varphi_t+\psi_t)\geq 0,
\end{equation}
while $\xi_0-\psi_0-\varphi_0=\xi_0-|\xi_0|\leq 0$. Hence, the parabolic maximum principle (for continuous weak solutions) implies the pointwise bound
\begin{equation}\label{xibd1}
\bar{\xi}_t\leq \varphi_t+\psi_t.
\end{equation}
We now use the following well-known asymptotics for the heat kernel on a compact manifold (see, e.g., \cite[Chapter~VI]{Chavel}).

\begin{lemma}\label{heat.limit}
	We have $(4\pi t)^{n/2}e^{d(x,y)^2/4t}K(t,x,y)\to 1$ as $t\to 0^+$, uniformly in $x,y\in M$.
\end{lemma}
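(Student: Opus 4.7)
This is a classical short-time heat kernel asymptotic, proved in detail in \cite[Chapter~VI]{Chavel}; here we only sketch the Minakshisundaram--Pleijel parametrix construction. Fix $\rho\in(0,\mathrm{inj}(M))$ and, on the diagonal tube $U_\rho:=\{(x,y)\in M\times M\mid d(x,y)<\rho\}$, set $\theta(x,y):=\det(d\exp_x)_{\exp_x^{-1}(y)}$. This is smooth, symmetric, satisfies $\theta(x,x)\equiv 1$, and therefore $\theta(x,y)^{-1/2}=1+O(d(x,y)^2)$ uniformly on $U_\rho$.

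Pick $\chi\in C^\infty_c([0,\rho))$ equal to $1$ on $[0,\rho/2]$ and introduce the leading-order parametrix
\[
H(t,x,y):=\chi(d(x,y))\,(4\pi t)^{-n/2}\,e^{-d(x,y)^2/4t}\,\theta(x,y)^{-1/2}.
\]
A direct computation, using $\mathrm{grad}_y(d(x,y)^2/2)=-\exp_y^{-1}(x)$ together with the radial transport ODE satisfied by $\theta$, shows $(\partial_t+d^*d)H(t,x,y)=R(t,x,y)$, where $R$ is smooth and bounded by a constant multiple of $(4\pi t)^{-n/2}e^{-d(x,y)^2/4t}$ uniformly on $M\times M$; the contribution coming from the cutoff annulus $\{\rho/2<d<\rho\}$ is exponentially small in $1/t$.

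Duhamel's principle then gives
\[
K(t,x,y)-H(t,x,y)=-\int_0^t\int_M K(t-s,x,z)\,R(s,z,y)\,dz\,ds,
\]
and combining this with standard Gaussian upper bounds for $K$ on the compact manifold $M$ yields the estimate $|K(t,x,y)-H(t,x,y)|\le C\,t\,(4\pi t)^{-n/2}e^{-d(x,y)^2/4t}$ uniformly in $(x,y)$. Multiplying through by $(4\pi t)^{n/2}e^{d(x,y)^2/4t}$ and using the uniform limit $\theta(x,y)^{-1/2}\to 1$ then concludes the proof. The main technical obstacle is propagating the Gaussian weight $e^{-d(x,y)^2/4t}$ through the Duhamel integral with uniform constants; this is precisely what motivates defining $H$ to carry both the correct leading coefficient $\theta^{-1/2}$ and the full Gaussian tail, rather than relying on a cruder approximation such as the bare Euclidean heat kernel.
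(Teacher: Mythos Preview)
Your sketch is correct and follows exactly the Minakshisundaram--Pleijel parametrix construction given in \cite[Chapter~VI]{Chavel}, which is precisely the reference the paper defers to in lieu of a proof. The paper itself provides no argument beyond that citation, so there is nothing further to compare.
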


In particular, since $|K(t,x,y)|\le C(\tau,M)$ for any $t\ge\tau>0$, one has
$$\int_M K(t,x,y)^p\,dy\leq C(p)\max\{t^{(1-p)n/2},1\}.$$
Since $|u_t|\le 1$ by Remark \ref{ut.bd}, we have automatically
$$\Big\|\frac{1}{\epsilon}(1-|u_t|^2)\Big\|_{L^{\infty}(M)}\leq \frac{1}{\epsilon} \quad \text{and} \quad \Big\|\frac{1}{\epsilon}(1-|u_t|^2)\Big\|_{L^2(M)}\leq 2\sqrt{\Lambda}$$
for every $t$, and interpolating we see that
$$\Big\|\frac{1}{\epsilon}(1-|u_t|^2)\Big\|_{L^q(M)}\leq C(M,\Lambda)\epsilon^{(2-q)/q}$$
for $2\le q\le\infty$.
It follows that, for $p\in (1,\frac{n}{n-2})$ with H\"{o}lder conjugate $q$,
\begin{align*}
	\psi(t,x)&\leq \int_0^t\|K(t-s,x,y)\|_{L^p(M)}C\epsilon^{(2-q)/q}\,ds\\
	&\leq C \epsilon^{(2-q)/q}\int_0^t (t-s)^{\frac{n(1-p)}{2p}}\,ds\\
	&\leq C(p,M,\Lambda)\epsilon^{(2-q)/q}\Big(\frac{n}{2p}-\frac{n-2}{2}\Big)^{-1}t^{\frac{n}{2p}-\frac{n-2}{2}},
\end{align*}
provided that $q\ge 2$.
In particular, taking
$p:=\frac{n-1}{n-2}\text{ and }q:=n-1$,
we arrive at an estimate of the form
$$\psi(t,x)\leq C_1(M,\Lambda) \epsilon^{\frac{3-n}{n-1}}t^{\frac{n-2}{2(n-1)}}.$$

Now, let
$$\eta_t:=\bar{\xi}_t-\varphi_t\leq \psi_t\leq C_1\epsilon^{\frac{3-n}{n-1}}t^{\frac{n-2}{2(n-1)}},$$
and setting
$$f_t:=\eta_t-C_1\epsilon^{\frac{3-n}{n-1}}t^{\frac{n-2}{2(n-1)}}(1-|u_t|^2),$$
note that
$$f_t\leq C_1\epsilon^{\frac{3-n}{n-1}}t^{\frac{n-2}{2(n-1)}}|u_t|^2$$
pointwise. On the other hand, recalling \eqref{xi.sub}, note that $f_t$ satisfies
\begin{align*}
	-(\partial_t+d^*d)f_t&\geq \frac{|u_t|^2}{\epsilon^2}\bar{\xi}_t-\frac{C_0}{2\epsilon}(1-|u_t|^2)
	+C_1\epsilon^{\frac{3-n}{n-1}}t^{\frac{n-2}{2(n-1)}}(2|\nabla_t u_t|^2-\epsilon^{-2}(1-|u_t|^2)|u_t|^2)\\
	&\geq \frac{|u_t|^2}{\epsilon^2}f_t-\frac{C_0}{2\epsilon}(1-|u_t|^2),
\end{align*}
and since $|u_t|^2\geq c\epsilon^{\frac{n-3}{n-1}}t^{\frac{2-n}{2(n-1)}}f_t$, it follows that on $\{f>0\}$ we have
$$-(\partial_t+d^*d)f_t\geq \epsilon^{-2}(c\epsilon^{\frac{n-3}{n-1}}t^{\frac{2-n}{2(n-1)}}f_t^2-C_0\epsilon).$$

Note that $f_0=\xi_0-|\xi_0|\le 0$. For any $\tau>0$, if $f$ has a positive maximum on $[0,\tau]\times M$ at some point $(t,x)$ with $t>0$, then the last weak subequation implies that here
\begin{align*}
	&c\epsilon^{\frac{n-3}{n-1}}t^{\frac{2-n}{2(n-1)}}f_t^2-C_0\epsilon\le 0,
\end{align*}
or equivalently
\begin{align*}
	&f_t
	\le C\epsilon^{\frac{1}{n-1}}t^{\frac{n-2}{4(n-1)}}
	\le C\epsilon^{\frac{1}{n-1}}\tau^{\frac{n-2}{4(n-1)}}.
\end{align*}
The same inequality holds then on all of $[0,\tau]\times M$. Since $\tau$ was arbitrary, we obtain
\begin{align*}
	&f_t\le C\epsilon^{\frac{1}{n-1}}t^{\frac{n-2}{4(n-1)}}
\end{align*}
for all $t\ge 0$. Recalling the definitions of $f$, $\eta$, $\bar{\xi}$ and $\varphi$, the preceding estimate tells us that
\begin{equation}
\xi_t\leq Ce^{Ct}\Big(\varphi_t+\epsilon^{\frac{1}{n-1}}+\epsilon^{\frac{2}{n-1}}\frac{1-|u|^2}{\epsilon}\Big),
\end{equation}
where $\varphi$ is the solution of the heat equation with initial data $\varphi_0=|\xi_0|$, for a constant $C=C(M,\Lambda)$. Finally, noting that
$$\varphi_t\leq C\|\xi_0\|_{L^1(M)}\leq C(M,\Lambda) \quad \text{for $t\geq 1,$} $$
it follows from the above that
\begin{equation}\label{main.xi.bd}
\xi_t\leq Ce^{Ct}(1+\epsilon^{\frac{2}{n-1}}\sqrt{e_{\epsilon}(u_t,\nabla_t)})\quad \text{for $t\geq 1$} .
\end{equation}

\subsection{Huisken-type monotonicity and $\bm{(n-2)}$-energy-density bounds along the flow}
As above, let $(u_t,\nabla_t)$ be a solution of the gradient flow with $E_{\epsilon}(u_0,\nabla_0)\leq\Lambda$ and $|u_0|\leq 1$. Mimicking the computations leading to Huisken's monotonicity for the mean curvature flow \cite{Huisk}, let us introduce $h(t,x)$, a positive solution of the \emph{backward} heat equation 
\begin{equation*}
\partial_th=d^*dh
\end{equation*}
on $[0,T)\times M$, with $\int_Mh=1$. Write $e_t:=e_{\epsilon}(u_t,\nabla_t)$ to lighten the notation and set 
\begin{equation*}
\Phi_h(t):=\int_M h e_t. 
\end{equation*}
Integration by parts combined with the gradient flow equations allows us to deduce that
\begin{align*}
	\Phi_h^\prime(t) &= \int_M(\partial_t h e_t+h \partial_t e_t)\\
	&=\int_M [(d^*dh) e_t+ h(2\langle \nabla \dot{u}-i\dot{\alpha}u,\nabla u\rangle+2\epsilon^2\langle d\dot{\alpha},d\alpha\rangle-\epsilon^{-2}(1-|u|^2)\langle u,\dot{u}\rangle)]\\
	&=\int_M [\langle dh,d e_t\rangle-2h(|\dot{u}|^2+\epsilon^2|\dot{\alpha}|^2)-2(\langle \nabla_{dh}u,\dot{u}\rangle+\epsilon^2d\alpha(dh,\dot{\alpha}))]
\end{align*}
(where we dropped the subscript $t$ from $u_t$, $\alpha_t$, $\dot{u}_t$, and $\dot{\alpha}_t$). Next, recall from \cite[Section~4]{PigatiStern} the stress-energy tensor
$$T_{\epsilon}(u,\nabla):=e_{\epsilon}(u,\nabla)g-2\nabla u^*\nabla u-2\epsilon^2d  \alpha^*d\alpha,$$ and note (cf.\ \cite[Section~4]{PigatiStern}) that we have the identities
\begin{align*}
	\operatorname{div}(T_{\epsilon})&=2\langle \nabla u,\nabla^*\nabla u\rangle+d\frac{W(u)}{\epsilon^2}
	+2\omega(\langle iu,\nabla u\rangle,\cdot)-2\epsilon^2\omega(d^*\omega,\cdot)\\
	&=-2\langle \nabla u,\dot{u}\rangle-2\epsilon^2 d\alpha(\cdot,\dot{\alpha}),
\end{align*}
where the second equality follows from \eqref{gf}. We can now rewrite the term $\langle dh, de_t\rangle$ in our computation of $\Phi_h'(t)$ as
$$\langle dh,d e_t\rangle=\langle dh, \operatorname{div}(T_{\epsilon})+2\operatorname{div}(\nabla u^*\nabla u+\epsilon^2d\alpha^*d\alpha)\rangle,$$
and apply the formula for $\operatorname{div}(T_{\epsilon})$ to see that
\begin{align*}
\Phi_h'(t)  & =  2\int_M \langle dh, \operatorname{div}(\nabla u^*\nabla u+\epsilon^2d\alpha^*d\alpha)\rangle \\
	& \quad -2 \int_M h ( |\dot{u}|^2+\epsilon^2|\dot{\alpha}|^2 ) - 4\int_M ( \langle \nabla_{dh}u,\dot{u}\rangle+\epsilon^2d\alpha(dh,\dot{\alpha})) \\
	& = - 2 \int \langle D^2h,\nabla u^*\nabla u+\epsilon^2d\alpha^*d\alpha\rangle \\
	& \quad - 2\int_M ( h|\dot{u}+h^{-1}\nabla_{dh}u|^2+\epsilon^2 h |\dot{\alpha}+h^{-1}\iota_{dh}d\alpha|^2 ) \\
	& \quad + 2\int_M h^{-1}( |\nabla_{dh}u|^2+\epsilon^2|\iota_{dh}d\alpha|^2 )  \\
	& \leq -2\int \langle D^2h,\nabla u^*\nabla u+\epsilon^2d\alpha^*d\alpha\rangle + 2\int_M h^{-1}(|\nabla_{dh}u|^2+\epsilon^2|\iota_{dh}d\alpha|^2). 
\end{align*}
Now, setting
$$P_t:=\nabla u^*\nabla u+\epsilon^2d\alpha^*d\alpha,$$ 
so that the stress-energy tensor $T_\epsilon(u, \nabla)$ becomes simply $e_\epsilon(u, \nabla)g - 2 P_t$, we can rewrite the preceding inequality as
\begin{equation}\label{phi.dot}
\Phi_h'(t)\leq -2\int_M \langle P_t,D^2h-h^{-1}dh\otimes dh\rangle.
\end{equation}
On the other hand, by Hamilton's \emph{matrix Harnack estimate} for the heat equation, see \cite[p.\ 132]{Hamilton}, there exist constants $C(M)$ and $B(M)$ such that, for $t\in [T-1,T)$,
$$D^2h-\frac{dh\otimes dh}{h}+\frac{1}{2(T-t)}hg\geq -C[(1+h\log(B/(T-t)^{n/2})]g.$$
Applying this in \eqref{phi.dot}, we see that for $t\in [T-1,T)$ the following inequality holds:
$$\Phi_h'(t)\leq \int_M\Big(\frac{h}{T-t}+C+Ch\log(B/(T-t)^{n/2})\Big)\langle P_t,g \rangle.$$
Now, recalling \eqref{main.xi.bd}, observe that
\begin{align*}
	\langle P_t,g\rangle&= |\nabla u|^2+2\epsilon^2|d\alpha|^2\\
	&=e_t+\epsilon^2|d\alpha|^2-\frac{(1-|u|^2)^2}{4\epsilon^2}\\
	&=e_t+\xi_t\Big(\epsilon |d\alpha|+\frac{1}{2\epsilon}(1-|u|^2)\Big)\\
	&\leq (1+Ce^{Ct}\epsilon^{\frac{2}{n-1}})e_t+Ce^{Ct}\sqrt{e_t}
\end{align*}
for $t\geq 1$.
In particular, setting $\alpha_n:=\frac{2}{n-1}$, for $T\in [2,3]$ and $t\in [T-1,T)$ it then follows that
\begin{align*}
	\Phi_h'(t)&\leq \frac{1+C\epsilon^{\alpha_n}}{T-t}\Phi_h(t)+\frac{C}{T-t}\int_M h\sqrt{e_t} + C\int_Me_t+C\log(B/(T-t)^{n/2})\Phi_h(t)\\
	&\leq \frac{1+C_2\epsilon^{\alpha_n}}{T-t}\Phi_h(t)+\frac{C_2}{T-t}\Phi_h(t)^{1/2} +C_2+C_2\log(B/(T-t)^{n/2})\Phi_h(t)
\end{align*}
for some $C_2(M,\Lambda)$,
where we also used the trivial inequality $\langle P_t,g\rangle \le 2e_t$.
Thus, setting
$$\Psi_h(t):=(T-t)^{1+C_2\epsilon^{\alpha_n}}e^{\zeta(t)}\Phi_h(t),$$
where $|\zeta(t)|\leq C(M,\Lambda)$ is the bounded function on $[T-1,T)$ given by
$$\zeta(t):=-\int_1^t C_2\log(B/(T-s)^{n/2})\,ds,$$
we see that
$$\Psi_h'(t) \leq C(T-t)^{-1/2}\Psi_h(t)^{1/2}+C$$
for $t\in [T-1,T)\subseteq [1,3)$.
From this differential inequality, we can conclude that 
\begin{equation}\label{weak.mono}
\Psi_h(t)\leq C(M,\Lambda)(\Psi_h(T-1)+1), 
\end{equation}
for any $t\in [T-1,T)\subseteq [1,3)$. 

Specializing, fix $T\in [2,3]$ and $x_0\in M$, and let 
$$h(t,x)=h_{T,x_0}(t,x):=K(T-t,x,x_0),$$
where $K$ is the heat kernel on $M$. Then, for $t\in [T-1,T)$, the inequality in \eqref{weak.mono} leads to an estimate of the form
\begin{align*}
	(T-t)^{1+C\epsilon^{\alpha_n}} & \int_M K(T-t,x,x_0)e_{\epsilon}(u_t,\nabla_t)\,dx\\ 
	&\leq C\int_M K(1,x,x_0)e_{\epsilon}(u_{T-1},\nabla_{T-1})+C\\
	&\leq CE_{\epsilon}(u_{T-1},\nabla_{T-1})+C\\
	&\leq C(M,\Lambda).
\end{align*}
In particular, taking $t:=2$ and $T:=2+\delta^2$ for $\delta\in (0,1]$, we see that
\begin{equation}
\delta^{2+2C\epsilon^{\alpha_n}}\int_M K(\delta^2,x,x_0)e_{\epsilon}(u_2,\nabla_2)\,dx\leq C(M,\Lambda).
\end{equation}
Since
$$\inf_{\epsilon\in (0,1]}\epsilon^{2C\epsilon^{\alpha_n}}=c(M,\Lambda)>0,$$
it follows that
\begin{equation}
\max_{\epsilon\leq \delta\leq 1}\Big(\delta^2\int_MK(\delta^2,x,x_0)e_{\epsilon}(u_2,\nabla_2)\,dx\Big)\leq C(M,\Lambda).
\end{equation}
Finally, using again Lemma \ref{heat.limit}, it follows that
$$\delta^{2-n}\int_{B_{\delta}(x_0)}e_{\epsilon}(u_2,\nabla_2)\leq C(M,\Lambda) \quad \text{for } \epsilon \leq \delta \leq 1.$$
Thus, we have arrived at the following bound.

\begin{proposition}\label{grad.dens.bd} If $(u_t,\nabla_t)$ is a solution of the gradient flow \eqref{gf} for $E_{\epsilon}$ with initial energy bound $E_{\epsilon}(u_0,\nabla_0)\leq \Lambda$, then at time $2$ the pair $(u_2,\nabla_2)$ satisfies
\begin{equation}
\int_{B_r(x_0)}e_{\epsilon}(u_2,\nabla_2)\leq C(M,\Lambda)r^{n-2}, 
\end{equation}
for all $r \in [\epsilon, 1]$ and $\ x_0\in M$.
\end{proposition}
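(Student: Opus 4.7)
The plan is to establish the density bound via a Huisken-type monotonicity argument adapted from codimension-two mean curvature flow, using the bound on the discrepancy $\xi_t$ from \eqref{main.xi.bd} as the key replacement for the Brakke identity. Concretely, I would introduce a positive solution $h$ of the backward heat equation $\partial_t h = d^*dh$ on $[0,T)\times M$ with $\int_M h = 1$, form the weighted energy
\[
\Phi_h(t) := \int_M h\, e_\epsilon(u_t,\nabla_t),
\]
and compute $\Phi_h'(t)$ using the flow equations \eqref{gf}. Integration by parts, together with the identity $\operatorname{div}(T_\epsilon) = -2\langle \nabla u,\dot u\rangle - 2\epsilon^2 d\alpha(\cdot,\dot\alpha)$ for the stress-energy tensor $T_\epsilon = e_\epsilon g - 2P_t$, allows me to complete the square in $(\dot u,\dot\alpha)$ and arrive at
\[
\Phi_h'(t) \le -2\int_M \langle P_t,\, D^2 h - h^{-1}\,dh\otimes dh\rangle,
\]
where $P_t = \nabla u^*\nabla u + \epsilon^2\, d\alpha^*d\alpha$.

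On Euclidean space the Gaussian weight satisfies $D^2h - h^{-1}dh\otimes dh = -\frac{h}{2(T-t)}g$, yielding exact monotonicity; on a closed manifold I would instead invoke Hamilton's matrix Harnack inequality, which bounds this Hessian from below by $-\frac{h}{2(T-t)}g - C(1+h\log(B/(T-t)^{n/2}))g$. Tracing against $P_t$ requires controlling $\langle P_t,g\rangle = |\nabla u|^2 + 2\epsilon^2|d\alpha|^2$, which differs from $e_t$ by the discrepancy term $\xi_t\cdot(\epsilon|d\alpha|+(1-|u|^2)/(2\epsilon))$. Here is where \eqref{main.xi.bd} is crucial: for $t\ge 1$ it yields
\[
\langle P_t,g\rangle \le (1+C\epsilon^{2/(n-1)})\, e_t + C\sqrt{e_t},
\]
so the main loss from the non-Euclidean ambient geometry and from the gauge-theoretic discrepancy is absorbed into an $\epsilon^{2/(n-1)}$ correction of the exponent. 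This is the heart of the argument and the step most likely to go wrong: without the fine control of $\xi_t$ established in Subsection 5.1, the trace term $\langle P_t,g\rangle$ could exceed $e_t$ by a factor that is not asymptotically negligible, and no useful monotonicity would survive.

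Combining the above, I would obtain a differential inequality of the form
\[
\Phi_h'(t) \le \frac{1+C\epsilon^{\alpha_n}}{T-t}\Phi_h(t) + \frac{C}{T-t}\Phi_h(t)^{1/2} + C + C\log(B/(T-t)^{n/2})\Phi_h(t)
\]
with $\alpha_n = 2/(n-1)$. The substitution $\Psi_h(t) = (T-t)^{1+C\epsilon^{\alpha_n}}e^{\zeta(t)}\Phi_h(t)$, with $\zeta$ absorbing the logarithmic term, reduces this to an inequality $\Psi_h' \le C(T-t)^{-1/2}\Psi_h^{1/2} + C$, which integrates directly to $\Psi_h(t) \le C(M,\Lambda)(\Psi_h(T-1)+1)$ on $[T-1,T) \subseteq [1,3)$. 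Since $E_\epsilon$ is nonincreasing along the flow, $\Psi_h(T-1)$ is controlled by $\Lambda$.

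Finally, to extract the density bound at $t=2$, I would take $h(t,x) = K(T-t,x,x_0)$ with $T = 2 + \delta^2$ and $\delta\in(0,1]$, giving
\[
\delta^{2+2C\epsilon^{\alpha_n}}\int_M K(\delta^2,x,x_0)\, e_\epsilon(u_2,\nabla_2)\, dx \le C(M,\Lambda).
\]
For $\epsilon\le\delta$ the factor $\delta^{2C\epsilon^{\alpha_n}}$ is bounded below by a positive constant $c(M,\Lambda)$, and Lemma \ref{heat.limit} gives the lower bound $K(\delta^2,x,x_0) \ge c\delta^{-n}$ for $x\in B_\delta(x_0)$, converting the weighted integral to the desired ball integral and yielding $\int_{B_r(x_0)} e_\epsilon(u_2,\nabla_2) \le C r^{n-2}$ for all $r\in[\epsilon,1]$ and $x_0\in M$.
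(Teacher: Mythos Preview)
Your proposal is correct and follows essentially the same approach as the paper's own argument: the weighted energy $\Phi_h$, the stress-energy divergence identity, completion of the square, Hamilton's matrix Harnack estimate, the discrepancy bound \eqref{main.xi.bd} to control $\langle P_t,g\rangle$, the $\Psi_h$ substitution with the logarithmic integrating factor, and the final specialization to the heat kernel with $T=2+\delta^2$ all match the paper step for step. Even the observation that $\inf_{\epsilon\in(0,1]}\delta^{2C\epsilon^{\alpha_n}}\ge c(M,\Lambda)>0$ for $\delta\ge\epsilon$ and the use of Lemma~\ref{heat.limit} to pass to the ball integral are exactly what the paper does.
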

Since $(u_2,\nabla_2)$ depends continuously on the initial couple $(u_0,\nabla_0)$, this provides in particular the regularization that we needed in the previous section.
\begin{remark}
Note that, in analogy with the monotonicity formula for critical couples, if we just used the trivial bound $\langle P_t,g\rangle\le 2e_t$ we would have obtained 
\begin{equation*}
\Phi_h'(t)\le\frac{2}{T-t}\Phi_h(t)+C+C\log(B/(T-t)^{n/2})\Phi_h(t),
\end{equation*}
leading to 
\begin{equation*}
(T-t)^2\int_M h_{T,x_0}(t,\cdot)e_t\le C(M,\Lambda)
\end{equation*}
and hence a non-sharp bound $C\delta^{n-4}$ for the energy of $(u_2,\nabla_2)$ on a ball $B_\delta(x_0)$. This would have sufficed for our present purposes (of ruling out concentration of mass in the min-max families) only when $n>4$.
\end{remark}


\subsection{Long-time existence of the gradient flow}
In this last part we show long-term existence, uniqueness and continuous dependence on initial conditions for the gradient flow of $E_\epsilon$, on the trivial line bundle. To do so, it is convenient to pass to the Coulomb gauge.
Namely, given a smooth couple $(u,\alpha)$, we can always find a change of gauge
\begin{equation} \label{gaugeParabolic}
(v,\beta)=(e^{i\theta}u,\alpha+d\theta) \quad \text{with $d^*\beta=0$}. 
\end{equation}
Indeed, it is enough to take a solution $\theta:M\to\R$ of
$d^*\alpha+d^*d\theta=0$, i.e., $\Delta_H \theta=-d^*\alpha$. The solution is unique once we impose $\int_M\theta=0$.

In the sequel, we denote $Q:=-\Delta_H ^{-1}d^*:\Omega^1(M)\to\Omega^0(M)$ the corresponding operator, with values into mean-zero functions. By standard elliptic regularity, this operator maps $H^k(M)$ continuously into $H^{k+1}(M)$, for any $k\in\N$.

Given a smooth solution $(u_t,\alpha_t)$ to the gradient flow equations, let $\theta_t=Q\alpha_t$. Omitting the time dependence and passing to the Coulomb gauge as in \eqref{gaugeParabolic} 
we get $\dot\theta=\epsilon^{-2}Q\ang{iu,\nabla u}$. Thus, setting $\tilde\nabla:=d-i\beta=\nabla-id\theta$, we obtain 
\begin{align*}
\dot{\beta}
&=\dot{\alpha}+d\dot\theta \\
&=-d^*d\alpha+\epsilon^{-2}(\ang{iu,\nabla u}+dQ\ang{iu,\nabla u}) \\
&=-\Delta_H\beta+\epsilon^{-2}(\ang{iv,\tilde\nabla v}+dQ\ang{iv,\tilde\nabla v}),
\end{align*}
since by gauge invariance $d^*d\alpha=d^*d\beta=\Delta_H\beta$ and $\ang{iu,\nabla u}=\ang{iv,\tilde\nabla v}$. Similarly,
\begin{align*}
\dot{v}
&=e^{i\theta}\dot{u}+ie^{i\theta}\dot{\theta}u \\
&=-\tilde\nabla^*\tilde\nabla v+\frac{1}{2\epsilon^2}(1-|v|^2)v+\epsilon^{-2}(Q\ang{iv,\tilde\nabla v})iv.
\end{align*}
Let $P:\Omega^1(M)\to\Omega^1(M)$ denote the Hodge projection on the co-closed part of a one-form.
Since $-dQ\lambda$ equals the exact part of $\lambda$, we have $\lambda+dQ\lambda=P\lambda$ for any $\lambda\in\Omega^1(M)$.
Thus, expanding $\tilde\nabla^*\tilde\nabla$ in terms of $\beta$, the equations \eqref{gf} give the new system
\begin{equation} \label{parab.coul}
\left\{
\begin{aligned}
\dot{v}+d^*dv &= -2i\ang{\beta,dv}-|\beta|^2v+{\textstyle\frac{1}{2\epsilon^2}}(1-|v|^2)v+\epsilon^{-2}(Q\ang{iv,dv-i\beta v})iv, \\
\dot{\beta}+\Delta_H\beta &= \epsilon^{-2}P\ang{iv,dv-i\beta v}.
\end{aligned}
\right.
\end{equation}

Conversely, given a couple $(u_0,\alpha_0)$ and setting $\theta_0:=Q\alpha_0$, from a smooth solution $(v_t,\beta_t)$ of \eqref{parab.coul} with initial condition $(e^{i\theta_0}u_0,\alpha_0+d\theta_0)$ one recovers a smooth solution $(u_t,\alpha_t)$ to the original system \eqref{gf}, by letting $\theta=\theta_t$ solve $\dot\theta_t=\epsilon^{-2}Q\ang{iv_t,(d-i\beta_t)v_t}$, and setting $(u,\alpha):=(e^{-i\theta}v,\beta-d\theta)$.

Thus, we reduce ourselves to establishing the desired long-term existence, uniqueness and continuous dependence for \eqref{parab.coul}. We will use the following classical fact from the theory of linear parabolic equations.

\begin{lemma}\label{ibp.bound}
Given $f_t\in\Omega^\ell(M)$ smooth on $[0,T]\times M$, with $0<T\le 1$, the (unique) solution $w_t$ to $\de_tw_t+\Delta_Hw_t=f_t$ with initial condition $w_0=0$ satisfies 
\begin{equation*}
\|w\|_{C^0([0,T],H^{k+1}(M))}\le C(k,\ell,M)\|f\|_{L^2([0,T],H^k(M))}, 
\end{equation*}
where the norms are shorthand for $\max_{t\in[0,T]}\|w_t\|_{H^{k+1}(M)}$ and $(\int_0^T\|f_t\|_{H^k(M)}^2\,dt)^{1/2}$.
\end{lemma}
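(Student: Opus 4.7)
The plan is to prove this standard maximal-regularity-type bound via a direct energy argument, exploiting the fact that $\Delta_H$ acting on $\Omega^\ell(M)$ is a non-negative, self-adjoint elliptic operator on $L^2(\Omega^\ell(M))$ with discrete spectrum. Existence and uniqueness of a smooth solution $w_t$ with $w_0=0$ for smooth $f_t$ will follow, for instance, from Duhamel's formula $w_t=\int_0^t e^{-(t-s)\Delta_H}f_s\,ds$, or equivalently by expanding $f_t$ in an $L^2$-orthonormal basis of eigenforms of $\Delta_H$ and solving the decoupled scalar ODEs for each Fourier coefficient.

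For the quantitative bound, I would work with the Sobolev norm defined intrinsically via the spectral calculus, $\|w\|_{H^{s}}:=\|(I+\Delta_H)^{s/2}w\|_{L^2}$; by standard elliptic regularity combined with the Bochner--Weitzenb\"ock formula (which accounts for the dependence on $k$, $\ell$, and $M$ through the curvature of $M$), this is equivalent to the covariant-derivative definition of $H^{s}(\Omega^\ell(M))$ for any integer $s\ge 0$. Testing the equation $\partial_tw+\Delta_Hw=f$ against $(I+\Delta_H)^{k+1}w_t$ in $L^2(M)$, the time-derivative term yields $\tfrac{1}{2}\partial_t\|w_t\|_{H^{k+1}}^2$, while the dissipation term---using that $\Delta_H=(I+\Delta_H)-I$ and that all spectral powers of $I+\Delta_H$ commute---gives
\begin{equation*}
	\langle\Delta_Hw_t,(I+\Delta_H)^{k+1}w_t\rangle=\|w_t\|_{H^{k+2}}^2-\|w_t\|_{H^{k+1}}^2.
\end{equation*}
The source term is handled by writing
\begin{equation*}
	\langle f_t,(I+\Delta_H)^{k+1}w_t\rangle=\langle (I+\Delta_H)^{k/2}f_t,(I+\Delta_H)^{(k+2)/2}w_t\rangle\le\|f_t\|_{H^k}\|w_t\|_{H^{k+2}},
\end{equation*}
followed by Young's inequality, producing the differential inequality
\begin{equation*}
	\partial_t\|w_t\|_{H^{k+1}}^2+\|w_t\|_{H^{k+2}}^2\le 2\|w_t\|_{H^{k+1}}^2+\|f_t\|_{H^k}^2.
\end{equation*}

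Since $w_0=0$, dropping the non-negative dissipation term and applying Gronwall's lemma on $[0,T]\subseteq[0,1]$ then yields
\begin{equation*}
	\max_{t\in[0,T]}\|w_t\|_{H^{k+1}}^2\le e^{2T}\int_0^T\|f_s\|_{H^k}^2\,ds\le e^{2}\,\|f\|_{L^2([0,T],H^k)}^2,
\end{equation*}
which is the stated bound. There is no genuine obstacle in this argument; the only real bookkeeping lies in passing between the intrinsic spectral Sobolev norm used in the energy estimate and the standard Sobolev norm appearing in the statement, which is a routine consequence of elliptic regularity on closed manifolds and is where the constant's dependence on $k$, $\ell$, and $M$ enters.
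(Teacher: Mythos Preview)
Your proof is correct. The paper does not actually prove this lemma; it simply states it as a ``classical fact from the theory of linear parabolic equations'' and moves on. Your energy argument---testing against $(I+\Delta_H)^{k+1}w_t$ in the spectral Sobolev scale, extracting the $H^{k+2}$ dissipation, absorbing the source term via Young, and closing with Gronwall on $[0,T]\subseteq[0,1]$---is a clean and standard way to supply the omitted details, and the bookkeeping you flag (equivalence of the spectral and covariant Sobolev norms via elliptic regularity and Weitzenb\"ock) is exactly where the constant $C(k,\ell,M)$ comes from.
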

As a consequence, we get a well-defined operator 
\begin{equation*}
T_{\ell,k}:L^2([0,T],H^k(M))\to C^0([0,T],H^{k+1}(M))
\end{equation*}
mapping $f$ to $w$.

Using this lemma, short-time existence and uniqueness easily follow using the Banach fixed-point theorem. Namely, fix an integer $k>\frac{n}{2}$ and, given a smooth initial condition $(v_0,\beta_0)$,
let $w^0$ denote the constant couple $w^0_t=(v_0,\beta_0)$.
For $R>0$, the subset $S$ of
\begin{align*}
&Y_T:=C^0([0,T],H^{k+1}(M)\times H^k(M)),
\end{align*}
given by the couples $w_t$ with initial value $w_0=(v_0,\beta_0)$ and $\|w-w^0\|_{Y_T}\le R$, forms a complete metric space with the distance induced by $Y_T$. To any $w=(v,\beta)\in S$ we can associate the solution $F(w)=(v',\beta')$ of
\begin{align*}
\left\{
\begin{aligned}
\dot{v'}+d^*dv'&=-2i\ang{\beta,dv}-|\beta|^2v+{\textstyle\frac{1}{2\epsilon^2}}(1-|v|^2)v+\epsilon^{-2}(Q\ang{iv,dv-i\beta v})iv, \\
\dot{\beta'}+\Delta_H\beta'&=\epsilon^{-2}P\ang{iv,dv-i\beta v}.
\end{aligned}
\right.
\end{align*}
Denoting $G(w_t)$ and $H(w_t)$ the right-hand sides of the two equations,
note that they belong to $C^0([0,T],H^k(M))$, since $H^k(M)$ is an algebra and $P$ and $Q$ map $H^k(M)$ into itself.
Hence, $F(w)\in Y_T$ is well-defined. 
For the same reason, letting $R':=R+\|v_0\|_{H^{k+1}(M)}+\|\beta_0\|_{H^k(M)}$, note that for a fixed $t \in [0,T]$ we have 
\begin{equation*}
\|G(w_t^1)-G(w_t^2)\|_{H^{k}(M)} \leq C(M,R') \|w_t^1-w_t^2\|_{H^{k+1}(M) \times H^k(M)}
\end{equation*}
and similarly
\begin{equation*}
\|H(w_t^1)-H(w_t^2)\|_{H^{k}(M)} \leq C(M,R') \|w_t^1-w_t^2\|_{H^{k+1}(M) \times H^k(M)},
\end{equation*}
whenever $w^1,w^2\in S$. As a consequence, Lemma \ref{ibp.bound} gives
\begin{equation*}
\|F(w^1)-F(w^2)\|_{Y_T}\le C(M,R')\sqrt{T}\|w^1-w^2\|_{Y_T}.
\end{equation*} 
Hence, for $T$ small enough, we have $\|F(w^1)-F(w^2)\|_{Y_T}\le\frac{1}{2}\|w^1-w^2\|_{Y_T}$ and, by continuity, $\|F(w^0)-w^0\|_{Y_T}\le R/2$; in particular, 
\begin{equation*}
\|F(w)-w^0\|_{Y_T}\le\|F(w)-F(w^0)\|_{Y_T}+\|F(w^0)-w^0\|_{Y_T}\le R
\end{equation*}
for $w\in S$, and thus $F(w)\in S$ as well. The Banach fixed-point theorem applies and gives a unique $w\in S$ with $F(w)=w$, as desired. Since $R$ was arbitrary, this also establishes uniqueness in this regularity class.

Let $[0,\bar T)$ be the maximal time of existence in the same class. From standard $L^2$ regularity theory for linear parabolic equations, it then follows that the solution $(v,\beta)$ is smooth on $[0,\bar T)\times M$.

We shall now prove long-time existence of the flow. Assume by contradiction that $\bar T<\infty$. As we already saw above, the corresponding solution $(u,\alpha)$ to the original system \eqref{gf} satisfies 
\begin{equation*}
\sup_{[0,\bar T)\times M}|d\alpha|<\infty. 
\end{equation*}
In a similar fashion, we can derive a bound for $|\nabla u|$. Indeed, as in \cite[Section~3]{PigatiStern}, we have the Bochner identity
\begin{align*}
&-(\de_t+dd^*)\frac{1}{2}|\nabla u|^2
=|\nabla^2u|^2+\frac{3|u|^2-1}{2\epsilon^2}|\nabla u|^2-2\ang{\omega,\psi(u,\nabla)}+\mathcal{R}_1(\nabla u,\nabla u)
\end{align*}
and, in particular, using the bound $|\psi(u,\nabla)|\le|\nabla u|^2$, we easily deduce the weak subequation
\begin{align*}
-(\de_t+d^*d)|\nabla u|
&\ge\frac{3|u|^2-1}{2\epsilon^2}|\nabla u|-2|\omega||\nabla u|-C(M)|\nabla u|. 
\end{align*}
Recalling that 
\begin{equation*}
-(\de_t+d^*d)\frac{1-|u|^2}{\epsilon}=\frac{|u|^2}{\epsilon^2}\frac{1-|u|^2}{\epsilon}-\frac{2}{\epsilon}|\nabla u|^2,
\end{equation*}
we obtain for the difference $w:=|\nabla u|-\frac{1-|u|^2}{\epsilon}$ that
\begin{align*}
&-(\de_t+d^*d)w
\ge\frac{|u|^2}{\epsilon^2}w+|\nabla u|\Big(\frac{2}{\epsilon}|\nabla u|-\frac{1-|u|^2}{2\epsilon^2}-2|\omega|-C(M)\Big).
\end{align*}
For any $0<\tau<\bar T$, if $w$ attains a positive maximum on $[0,\tau]\times M$ at some point $(t,x)$ with $t>0$,
it then follows that here
\begin{align*}
&\frac{2}{\epsilon}|\nabla u|\le\frac{1-|u|^2}{2\epsilon^2}+2|\omega|+C(M).
\end{align*}
Hence, 
\begin{equation*}
|\nabla u|
\le \frac{1}{\epsilon}+\sup_{[0,\bar T)\times M} w
\le \frac{2}{\epsilon}+\epsilon\sup_{[0,\bar T)\times M}|\omega|+\frac{\epsilon}{2}C(M)+\|\nabla_0u_0\|_{L^{\infty}(M)}
\end{equation*}
on all of $[0,\bar T)\times M$. By gauge invariance, we then get
\begin{align*}
&\sup_{[0,\bar T)\times M}|d\beta| < \infty \quad \text{and} \quad \sup_{[0,\bar T)\times M}|dv-i\beta v|<\infty.
\end{align*}
In particular, the co-exact part of $\beta_t$ is also bounded.
From \eqref{gf} it follows that
\begin{align*}
&\int_M(|\dot u_t|^2+\epsilon^2|\dot\alpha_t|^2)=-\frac{1}{2}\frac{d}{dt}E_\epsilon(u_t,\alpha_t),
\end{align*}
from which we deduce the bound $\int_0^{\bar T}\int_M|\dot\alpha|^2<\infty$ just by integrating the above expression. In particular, $\dot\alpha\in L^1([0,\bar T],L^2(M))$, giving $\alpha\in C^0([0,\bar T],L^2(M))$. Thus, the harmonic part $\alpha^h_t$ in the Hodge decomposition of $\alpha_t$ stays bounded. Since $\beta_t^h=\alpha_t^h$ and $\beta$ has no exact part, this implies that
\begin{align*}
&\sup_{[0,\bar T)\times M}|\beta|<\infty.
\end{align*}
Also, note that $|v|=|u|\le 1$ as a simple application of the maximum principle to the equation satisfied by $|u|^2$, provided $|u_0|\le 1$, implying 
\begin{equation*}
\sup_{[0,\bar T)\times M}|dv|<\infty.
\end{equation*}
From $L^p$ regularity theory (see, e.g., \cite{Schlag}), it follows that $v,\beta\in L^p([0,\bar T],W^{k,p}(M))$ for all $k\in\N$, $1<p<\infty$ and, hence, $v$ and $\beta$ extend smoothly to $[0,\bar T]\times M$. Since we can extend the solution past $\bar T$, we arrive at a contradiction. This shows that $\bar T=\infty$.
Finally, continuous dependence (in the smooth topology) on the initial condition for the system \eqref{gf} follows from the same property for \eqref{parab.coul}.

\frenchspacing

\nocite{*}
\printbibliography

\end{document}